\documentclass[12pt,twoside,reqno]{amsart}

\usepackage{amssymb,amsmath,amstext,amsthm,amsfonts,amscd,xcolor}

\usepackage{dsfont}

\usepackage[ansinew]{inputenc} 
\usepackage{graphicx}
\usepackage[mathscr]{eucal}
\usepackage{hyperref}



\newcommand{\R}{\mathbb{R}}
\newcommand{\C}{\mathbb{C}}
\newcommand{\N}{\mathbb{N}}
\newcommand{\Z}{\mathbb{Z}}

\newcommand{\T}{\mathbb{T}}
\newcommand{\SL}{{\rm SL}}
\newcommand{\GL}{{\rm GL}}
\newcommand{\gl}{{\rm Mat}}
\newcommand{\Mat}{{\rm Mat}}

\newcommand{\strip}{\mathscr{A}}

\newcommand{\sabs}[1]{\left| #1 \right|} 
\newcommand{\avg}[1]{\left< #1 \right>} 
\newcommand{\abs}[1]{\bigl| #1 \bigr|} 
\newcommand{\norm}[1]{\lVert#1\rVert} 
\newcommand{\bnorm}[1]{\Bigl\| #1\Bigr\|} 

\newcommand{\intpart}[1]{\lfloor #1 \rfloor} 
\newcommand{\transl}{{T}} 
\newcommand{\less}{\lesssim}
\newcommand{\more}{\gtrsim}
\newcommand{\ep}{\epsilon} 
\newcommand{\ka}{\kappa} 
\newcommand{\om}{\omega} 
\newcommand{\GLmR}{\GL(m, \R)}

\newcommand{\cocycles}{\mathcal{C}}

\newcommand{\params}{\mathscr{P}}

\newcommand{\nzerobar}{\underline{n_0}}

\newcommand{\dev}{\ep}                
\newcommand{\devf}{\underline{\dev}}  
\newcommand{\mes}{\iota}              
\newcommand{\mesf}{\underline{\mes}}  


\newcommand{\An}[1]{A^{({#1})}}  
\newcommand{\Bn}[1]{B^{({#1})}}  

\newcommand{\comp}{^{\complement}}

\newcommand{\ind}{\mathds{1}}

\newcommand{\B}{\mathscr{B}}                                                                                                   
\newcommand{\Bbar}{\bar{\mathscr{B}}}

\newcommand{\dist}{{\rm dist}}

\newcommand{\filt}{F}

\newcommand{\rgap}{{\rm gr}}

\newcommand{\Filt}{\mathfrak{F}}

\newcommand{\Filatp}[1]{\mathfrak{F}_{\supset{#1}}}

\newcommand{\dec}{E_{\cdot}}
\newcommand{\Dec}{\mathfrak{D}}

\newcommand{\Decatp}[1]{\mathfrak{D}_{\supset{#1}}}

\makeatletter
\newsavebox{\@brx}
\newcommand{\llangle}[1][]{\savebox{\@brx}{\(\m@th{#1\langle}\)}%
  \mathopen{\copy\@brx\mkern2mu\kern-0.9\wd\@brx\usebox{\@brx}}}
\newcommand{\rrangle}[1][]{\savebox{\@brx}{\(\m@th{#1\rangle}\)}%
  \mathclose{\copy\@brx\mkern2mu\kern-0.9\wd\@brx\usebox{\@brx}}}
\makeatother

\newcommand{\qpcmat}[1]{C^{\om}_{r} (\T^{#1}, \Mat (m, \R))}

\newcommand{\analyticf}[1]{C^{\om}_{r} (\T^{#1}, \R)}
\newcommand{\cocyclesone}{C^{\om}_{r} (\T, \Mat_m(\R))}

\newcommand{\normr}[1]{\lVert#1\rVert_r} 
\newcommand{\LE}[1]{L^{({#1})}}  
\newcommand{\normtwo}[1]{
{\left\vert\kern-0.25ex\left\vert\kern-0.25ex\left\vert #1 
    \right\vert\kern-0.25ex\right\vert\kern-0.25ex\right\vert} } 
\newcommand{\normz}[1]{\lVert#1\rVert_0} 

\newcommand{\norml}[2]{\lVert#1\rVert_{#2}} 
\newcommand{\Lp}[1]{L^{#1} (\T)}

\newcommand{\ga}{\gamma}
\newcommand{\li}{(l_1, l_2)}
\newcommand{\x}{(x_1, x_2)}
\newcommand{\lli}{(l_1, l_2)}
\newcommand{\omm}{(\om_1, \om_2)}
\newcommand{\un}[2]{u^{({#1})}_{{#2}}}   

\newcommand{\Zm}{\mathscr{Z}}

\newcommand{\umeas}{\mathscr{S}}
\newcommand{\ldtmeas}{\mathscr{C}}

\theoremstyle{plain}
\newtheorem{theorem}{Theorem}[section]
\newtheorem{proposition}{Proposition}[section]

\newtheorem{lemma}[proposition]{Lemma}
\newtheorem{definition}{Definition}[section]

\numberwithin{equation}{section}

\newtheorem{remark}{Remark}[section]


\title[Quasi-periodic cocycles with singularities]{Large deviations for quasi-periodic cocycles with singularities}

\date{}

\begin{document}

\author[P. Duarte]{Pedro Duarte}
\address{Departamento de Matem\'atica and CMAFIO\\
Faculdade de Ci\^encias\\
Universidade de Lisboa\\
Portugal 
}
\email{pmduarte@fc.ul.pt}

\author[S. Klein]{Silvius Klein}
\address{ Department of Mathematical Sciences\\
Norwegian University of Science and Technology (NTNU)\\
Trondheim, Norway\\ 
and IMAR, Bucharest, Romania }
\email{silvius.klein@math.ntnu.no}

\begin{abstract}
We derive large deviations type (LDT) estimates for linear cocycles over an ergodic multifrequency torus translation. These models are called quasi-periodic cocycles. We make the following assumptions on the model: the translation vector satisfies a generic Diophantine condition, and the fiber action is given by a matrix valued analytic function of several variables which is not identically singular. The LDT estimates obtained here depend on some uniform measurements on the cocycle. 

Our general results derived in~\cite{LEbook} 
regarding the continuity properties of the Lyapunov exponents (LE) and of the Oseledets filtration and decompositions are then applicable, and we obtain local weak-H\"older continuity of these quantities in the presence of gaps in the Lyapunov spectrum. The main new feature of this work is allowing a cocycle depending on several variables to have singularities, i.e. points of non invertibility.  This requires a careful analysis of the set of zeros of certain analytic functions of several variables and of the singularities (i.e. negative infinity values) of pluri-subharmonic functions related to the iterates of the cocycle.

A refinement of this method in the one variable case leads to a stronger LDT estimate and in turn to a stronger, nearly-H\"older modulus of continuity of the LE, Oseledets filtration and Oseledets decomposition. 

This is a draft of a chapter  in our forthcoming research monograph~\cite{LEbook}.  
\end{abstract}

\maketitle

\section{Introduction and statements}
\label{qp_intro}

We define the quasi-periodic cocycles  and describe our assumptions on them. We then formulate the main statements and  relate them to recent results for similar models.

\subsection{Description of the model}
\label{qp_model}
Let $\T = \R/\Z$ be the one variable torus, which we may regard as the unit circle in the complex plane. We use the notation $e (x) = e^{2 \pi i x}$ and in fact we write $f (x)$ instead of $f (e (x))$ whenever  $f$ is a function on $\T$.  

Throughout this paper, $a \less b$ will stand for $a \le C \,b$ for some context-universal constant $C$, which may be discarded from subsequent estimates. 

\smallskip

Let  $\T^d  = (\R/\Z)^d$ be the torus with $d \ge 1$ variables. We denote by $\sabs{ \cdot}$ the Haar measure on $\T^d$.
Let $\transl x = x + \om$ be the translation on the torus $\T^d $ by the vector $\om = (\om_1, \om_2, \ldots, \om_d)$. We assume that $1, \om_1, \om_2, \ldots, \om_d$ are rationally independent, hence $\transl$ is ergodic. The map $T$  defines the base dynamics and it is assumed fixed.

Let  $A \colon \T^d \to \Mat (m, \R)$ be a matrix valued real {\em analytic} function.

The pair $(\transl, A)$, acting on the vector bundle $\T^d \times \R^m$ by $(x, v) \mapsto (\transl x, \, A (x) v)$ is called an analytic, quasi-periodic linear cocycle (quasi-periodic because of the base dynamics, linear due to the linear fiber action, and analytic due to the analytic dependence on the base point). As before, the frequency vector $\om$ will be fixed, hence we identify the cocycle with its fiber action given my the function $A (x)$.

\vspace{4pt}

In order to be able to treat occurrences of small denominators, the translation vector will be assumed to satisfy a generic {\em Diophantine condition}:
\begin{equation}\label{DC}
\norm{k \cdot \om} \ge \frac{t}{\sabs{k}^{d+\delta_0}}
\end{equation}
for some $t > 0$, $\delta_0 > 0$  and for all $k \in \Z^d \setminus \{0\}$, where  for any real number $x$ we denote $\norm{x}:=\min_{k\in\Z} \abs{x-k}$.

Note that if $\delta_0$ is fixed and if for every $t > 0$, $\rm{DC}_t$ denotes the set of frequency vectors satisfying the condition \eqref{DC} above,  then the set $\rm{DC} := \cup_{t > 0} \, \rm{DC}_t$ has full measure.  

\vspace{4pt}

Since $A (x)$ is analytic on $\T^d$, it has an extension $A (z)$ to  $\strip_r^d = \strip_r \times \ldots \times \strip_r$, where $\strip_r = \{ z \in \C \colon 1 - r < \abs{z} < 1 + r \}$ is the annulus of width $2 r$.
Note that the iterates $\An{n} (x) := A (x + (n-1) \, \om)  \, \ldots \,A (x + \om) \,A (x)$ of the cocycle are also analytic functions on $\strip_r^d$.

We denote by $\analyticf{d}$ the Banach space of real valued analytic functions on $\strip_r^d$ with continuous extension up to the boundary and norm $\norm{f}_r := \sup_{z \in \strip_r^d} \, \abs{f (z) }$.

For every integer $m \ge 1$, let $\qpcmat{d}$ be the vector space of matrix valued analytic functions on $\strip_r^d$, with continuous extension up to the boundary. Endowed with the norm $\norm{A}_r := \sup_{z \in \strip_r^d} \, \norm{A (z) }$, it is a Banach space.

In a previous work (see \cite{DK2}) we studied $\GL (m, \R)$-valued analytic cocycles. 
Here we will allow our cocycles to have {\em singularities} (i.e. points of non-invertibility), as long as they are not identically singular. This in particular ensures that all Lyapunov exponents are finite.

Let us then define  $\cocycles_m$ to be the set of cocycles $A \in \qpcmat{d}$ with $\det [ A (x) ] \not\equiv 0$. This condition implies in particular that all LE are finite. 

The set $\cocycles_m$ is open in $\qpcmat{d}$ and we let $ \dist (A, B) := \norm{A-B}_r$ be the induced distance on it. 
Then the collection $\cocycles := \{  (\cocycles_m,  \dist) \}_{m \ge 1}$ is the space of cocycles for our quasi-periodic model.

%


\medskip

For the reader's convenience we briefly recall some definitions and notations regarding the Lyapunov exponents, Oseledets filtrations and decompositions of a cocycle $A$ in any space of cocycles $\cocycles_m$.

The ergodic theorem of Kingman allows us to define the Lyapunov exponents $L_j(A)$ with $1\leq j\leq m$ as
$L_j(A):=\Lambda_j(A)-\Lambda_{j-1}(A)$ where
$$ \Lambda_j(A) := \lim_{n\to \infty} \frac{1}{n}\,\log \norm{\wedge_j A(x)} \quad \text{ for }\, \mu\text{-a.e. } x\in X\;. $$

Let $\tau=(1\leq \tau_1 <\ldots <\tau_k < m)$ be a signature. If  $A\in\cocycles_m$ has a $\tau$-gap pattern, i.e., $L_{\tau_j}(A)>L_{\tau_{j+1}}(A)$ for all $j$,
we define the Lyapunov $\tau$-block 
$$ \Lambda^\tau(A):= ( \Lambda_{\tau_1}(A),\ldots, \Lambda_{\tau_k}(A))\in\R^k\;.$$

A flag of $\R^m$ is any increasing sequence of linear subspaces. The corresponding sequence of dimensions is called its signature.
A measurable filtration is a measurable function
on $X$, taking values in the space of flags of $\R^m$ 
with almost sure  constant signature.
We denote by
$\Filt(X,\R^m)$ the space of measurable filtrations.
Note that the Oseledets filtration of $A$, which we denote by $\filt(A)$, is an element of this space.

We denote by  $\Filatp{\tau}(X,\R^m)$
the subset of measurable filtrations with a signature $\tau$ or finer. If $F\in \Filatp{\tau}(X,\R^m)$ there is a natural projection $F^\tau$ with signature $\tau$, obtained from $F$ by simply `forgetting' some of its components. This space is endowed with the following pseudo-metric
$$ \dist_\tau(F, F'):=
\int_X d_\tau( F^\tau(x), (F')^\tau(x) )\,  \mu(dx) \;,$$
where $d_\tau$ refers to the metric on the $\tau$-flag manifold.

On the space $\Filt(X,\R^m)$ we consider the coarsest topology that makes the sets $\Filatp{\tau}(X,\R^m)$ open, and the pseudo-metrics $\dist_\tau$ continuous.


A decomposition of  $\R^m$ is a sequence of linear subspaces $\{E_j\}_{1\leq j \leq k+1}$ 
  whose direct sum is $\R^m$. This determines the flag 
  $E_1 \subset E_1\oplus E_2\subset \ldots  \subset E_1\oplus\ldots \oplus E_k$, whose signature $\tau$ also designates the signature of the decomposition.

A measurable decomposition is a measurable function
on $X$, taking values in the space of decompositions of $\R^m$ 
with almost sure  constant signature.
We denote by
$\Dec(X,\R^m)$  the space of measurable decompositions.
Note that the Oseledets decomposition of $A$, which we denote by $\dec(A)$, is an element of this space.

We denote by   $\Decatp{\tau}(X,\R^m)$
the subset of measurable decompositions with a signature $\tau$ or finer. If $\dec \in  \Decatp{\tau}(X,\R^m)$ there is a natural restriction $\dec^\tau$ with signature $\tau$, obtained from $\dec$ by simply `patching up' the appropriate components. This space is endowed with the following pseudo-metric
$$ \dist_\tau(\dec, \dec'):=
\int_X d_\tau( \dec^\tau(x), (\dec')^\tau(x) )\,  \mu(dx) \;,$$
where $d_\tau$ refers to the metric on the manifold of $\tau$-decompositions.

On the space $\Dec(X,\R^m)$  we consider the coarsest topology that makes the sets $\Decatp{\tau}(X,\R^m)$ open, and the pseudo-metrics $\dist_\tau$ continuous.

\medskip

We are ready to state a general result on the continuity of the LE, the Oseledets filtration and the Oseledets decomposition for quasi-periodic cocycles.

\begin{theorem} \label{qp_ldt_thm:cont}
Assume that the translation $\om \in \rm{DC}_t$ for some $t >0$ and let $m\ge1$.

Then all Lyapunov exponents $L_j:\cocycles_{m}\to \R$, with $1\leq j \leq m$, 
the Oseledets filtration $\filt:\cocycles_{m}\to \Filt(X,\R^m)$,
and the Oseledets decomposition  $\dec:\cocycles_{m}\to \Dec(X,\R^m)$,
are continuous functions of the cocycle
$A\in \cocycles_{m}$. 

Moreover,  if $A\in \cocycles_{m}$ has a $\tau$-gap pattern then the functions
$\Lambda^\tau$, $\filt^\tau$ and $\dec^\tau$ are
weak-H\"older continuous in a neighborhood of $A$.
\end{theorem}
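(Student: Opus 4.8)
The plan is to reduce Theorem~\ref{qp_ldt_thm:cont} to the abstract continuity results for the Lyapunov exponents, Oseledets filtration and Oseledets decomposition proven in~\cite{LEbook}, for which the only nontrivial input is a \emph{uniform large deviations type (LDT) estimate} for the cocycle and its nearby perturbations. Concretely, the abstract machinery requires that there be constants and a neighborhood of $A$ in $\cocycles_m$ on which one has, for all large $n$, a bound of the form
\begin{equation*}
\sabs{ \left\{ x \in \T^d \colon \abs{ \tfrac{1}{n} \log \norm{ \wedge_j \An{n}(x) } - \tfrac{1}{n} \int \log \norm{\wedge_j \An{n}} } > \ep_n \right\} } < \gamma_n \;,
\end{equation*}
with $\ep_n \to 0$ and $\gamma_n \to 0$ at appropriate (polynomial or sub-exponential) rates, uniformly over the perturbation. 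The first step, therefore, is to invoke the main LDT theorem of this chapter (which the excerpt promises and whose hypotheses---$\om \in \rm{DC}_t$, $A \in \cocycles_m$, i.e. $\det A(x) \not\equiv 0$---are exactly those assumed here) to obtain such an estimate, with the measurements on which the rate depends being the uniform ones alluded to in the abstract.

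The second step is to handle the uniformity. Since $\cocycles_m$ is open and the relevant ``measurements'' (the norm of $A$, a lower bound on $\norm{\wedge_j \An{n_0}}$ at some finite scale $n_0$ witnessing the gap, the distance of $\log\abs{\det A}$ from $-\infty$ in an $L^1$ or BMO sense, etc.) depend continuously---or at least lower/upper semicontinuously---on $A$ in the $\norm{\cdot}_r$ norm, these measurements are stable on a small enough ball $B(A,\rho)\subset\cocycles_m$. Hence the LDT estimate obtained in step one holds with uniform constants on $B(A,\rho)$. This is the step I expect to be the main obstacle in spirit, though the work is really done inside the LDT theorem itself: the delicate point is that allowing singularities means $\log\norm{\wedge_j \An{n}}$ is only pluri-subharmonic with $-\infty$ singularities, so controlling the relevant measurements and their stability under perturbation requires the careful analysis of zero sets of analytic functions of several variables and of the polar sets of the associated pSH functions that the abstract text flags as the new feature; here we simply quote it.

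The third step is the actual application of~\cite{LEbook}. The qualitative part---continuity of each $L_j$, of $\filt$ in the topology on $\Filt(X,\R^m)$, and of $\dec$ in the topology on $\Dec(X,\R^m)$ at every $A \in \cocycles_m$---follows from the abstract continuity theorem of~\cite{LEbook} applied to the base dynamics $(\T^d, T, \sabs{\cdot})$ together with the uniform LDT from steps one and two; one checks the remaining hypotheses of that theorem (ergodicity of $T$, which holds by rational independence of $1,\om_1,\dots,\om_d$; finiteness of all LE, which holds since $\det A \not\equiv 0$; measurability) are met. For the quantitative part, suppose $A$ has a $\tau$-gap pattern, so $L_{\tau_j}(A) > L_{\tau_{j+1}}(A)$ for all $j$; by the continuity already established the gap pattern persists on a neighborhood of $A$, and the abstract quantitative modulus-of-continuity theorem of~\cite{LEbook}---whose input is precisely a uniform LDT with the rates $\ep_n,\gamma_n$ provided here together with a positive spectral gap---yields weak-H\"older continuity of $\Lambda^\tau$, of $\filt^\tau$ with respect to $\dist_\tau$, and of $\dec^\tau$ with respect to $\dist_\tau$ on that neighborhood. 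Combining these gives exactly the two assertions of the theorem.
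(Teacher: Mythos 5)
Your proposal is correct and takes essentially the same route as the paper: derive the uniform fiber-LDT estimate of this chapter (Theorem~\ref{LDTsing-thm}), then feed it into the abstract continuity theorems for the LE, Oseledets filtration, and Oseledets decomposition from~\cite{LEbook}, using the persistence of the $\tau$-gap pattern to get the quantitative weak-H\"older modulus. The only (minor) difference is that you gloss over the specific bookkeeping the paper does---it explicitly verifies the compatibility condition (via Urysohn on $\Xi=\mathscr{C}_0(\T^d)$), the trivial base-LDT from unique ergodicity, and the uniform $L^2$-boundedness from Proposition~\ref{uplowbounds}, in addition to the uniform fiber-LDT, since all four are named hypotheses of the abstract criterion; your ``one checks the remaining hypotheses'' covers this, but less concretely.
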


In the one-variable case $d=1$, and for translations that satisfy a slightly stronger (but still generic) Diophantine condition, we obtain a stronger modulus of continuity (see 
Section~\ref{refinements_1var}).

\subsection{Brief literature review}
\label{qp_lit_review}
 

In some sense the strongest result on continuity of the Lyapunov exponents for quasi-periodic cocycles for the {\em one-frequency} translation $d=1$ case, is due to A. \'Avila, S. Jitomirskaya and C. Sadel (see \cite{AJS}). The space of cocycles considered in this paper is the whole $C^{\om}_r (\T^1, \Mat (m, \C))$ (hence identically singular cocycles are not excluded) and the authors prove {\em joint continuity} in cocycle and frequency at all points $(A, \om)$ with $\om$ irrational. 
A previous work of S. Jitomirskaya and C. Marx in \cite{JitMarx-CMP} established a similar result for $\Mat (2, \C)$-valued cocycles, using a different approach. 
We note here that both approaches rely crucially on the {\em convexity} of the top Lyapunov exponent of the complexified cocycle, as a function of the imaginary variable, by establishing first  continuity {\em away} from the torus.  This method immediately breaks down in the {\em several} variables $d > 1$ case treated here.

The approaches of \cite{AJS} and \cite{JitMarx-CMP} are independent of any arithmetic constraints on the translation frequency $\om$ and they do not use large deviations. However, the results are not quantitative, in the sense that the they do not provide any modulus of continuity of the LE. All available quantitative results, from the classic result of M. Goldstein and W.Schlag (see \cite{GS-Holder}) to more recent results such as \cite{KTao-d, KTao-Jacobi, YouZhang-Holder-cont, DK2} or our current work, use some type of large deviations, whose derivation depends upon imposing appropriate arithmetic conditions on $\om$.

We note that in the (more particular) context of Schr\"odinger cocycles, joint continuity in the energy parameter and the frequency translation was proven for the one variable $d=1$ case by J. Bourgain and S. Jitomirskaya (see \cite{B-J}) and for the several variables $d > 1$ case by J. Bourgain (see \cite{B-d}). Both papers used weaker versions of large deviation estimates, proven under weak arithmetic (i.e. restricted Diophantine) conditions on $\om$, although eventually the results were made independent of any such restrictions.

Continuity properties of the Lyapunov exponents were also established for certain {\em non-analytic} quasi-periodic models (see \cite{sK1, sK2, WangZhang-cont-C2}).


Here we are dealing with both a base dynamics given by a translation on the {\em several} variables torus and a fiber action which has {\em singularities}.

Our approach is based in  an essential way upon establishing certain {\em uniform} estimates on analytic functions of several variables and on pluri subharmonic functions. 

The issue of {\em singularity} is especially delicate for {\em several} variables functions. One obstacle, for instance, is the fact that an analytic function of several variables may vanish identically along hyperplanes, while not being globally identically zero. Related to this, a pluri subharmonic function may be $- \infty$ along hyperplanes, while not being globally $- \infty$.

A crucial tool in our analysis is Theorem~\ref{L2:bound}, that shows that the obstacle described above for an analytic function can be removed with an appropriate change of coordinates. Another crucial tool in our analysis is the observation that for the pluri subharmonic functions corresponding to iterates of a cocycle, while they may have singularities as described above, these singularities can be captured by certain {\em analytic} functions.

Most of the work in this paper is devoted to proving a {\em uniform} LDT estimate for iterates of the cocycle. Uniform LDT estimates for cocycles with singularities were obtained before by S. Jitomirskaya and C. Marx (see \cite{JitMarx}) for $\Mat (2, \C)$-valued cocycles. Again, the approach in \cite{JitMarx} is one-variable specific.

Let us now phrase the LDT estimate obtained in this paper.

\begin{theorem}\label{thm:LDT-intro}
Given  $A \in \cocycles_m$ and $\om \in \rm{DC}_t$, there are constants $\delta = \delta (A) > 0$, $n_0 = n_0 (A, t) \in \N$, $\ldtmeas = \ldtmeas(A) < \infty$, $a = a (d) > 0$ and $b = b (d) > 0$  such that if $\normr{B-A} \le \delta$ and $n \ge n_0$ then  
\begin{equation} \label{eq:LDT-intro}
\abs{  \{ x \in \T^d \colon \abs{ \frac{1}{n} \log \norm{\Bn{n} (x) } - \LE{n}_{1} (B)  } > \ldtmeas \, n^{-a} \} } < e^{- n^b}
\end{equation}
\end{theorem}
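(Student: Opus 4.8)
\emph{Proof proposal.} The plan is to control the plurisubharmonic function $u_n(x):=\tfrac1n\log\norm{\An{n}(x)}$, which extends plurisubharmonically to the polystrip $\strip_\rho^d$ for every $\rho<r$, and to show it is confined to an $n^{-a}$-neighbourhood of its own mean $\LE{n}_1(A)=\langle u_n\rangle$ off a subexponentially small set. First I would record the uniform-in-$n$ bounds that make the potential-theoretic machinery available: submultiplicativity on $\T^d$ gives $u_n\le\log\normr{A}$ on $\strip_\rho^d$; since $\det A\not\equiv0$ forces $L_1(A)$ to be finite, and since $n\mapsto\langle\log\norm{\An{n}}\rangle$ is subadditive, one has $\LE{n}_1(A)=\langle u_n\rangle\ge L_1(A)$, which together with the upper bound controls $\norm{u_n}_{L^1(\T^d)}$ uniformly in $n$. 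From these two bounds and plurisubharmonicity, the general estimates on psh functions from~\cite{LEbook} give a uniform bound on the Fourier coefficients of $u_n$ on $\T^d$. All constants here depend on $A$ only through $\normr{A}$, through $\abs{L_1(A)}$, and through a quantitative measure of the non-singularity of $A$ (e.g.\ a lower bound for $\norm{\det A}_{L^2(\T^d)}$); since $\cocycles_m$ is open and these quantities vary continuously, they are stable on a ball $\normr{B-A}\le\delta(A)$, so it suffices to prove~\eqref{eq:LDT-intro} for $A$, with constants that are then automatically uniform on that ball.

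\emph{The improvement mechanism.} To turn a crude deviation estimate at a block scale $\ell=\ell(n)$ (a fixed fractional power of $n$) into the polynomially small deviation $n^{-a}$ at scale $n$, I would decompose $\An{n}(x)$ into $\approx n/\ell$ blocks $\An{\ell}(x+j\ell\om)$ and invoke the avalanche principle. Submultiplicativity already gives the easy one-sided bound $u_n(x)\le\frac{\ell}{n}\sum_{j}u_\ell(x+j\ell\om)+O(\ell/n)$, and both this and the finer avalanche-principle identity reduce matters to Birkhoff averages, along the translation by $\ell\om$, of $u_\ell$ and of a second-exterior-power ``no-cancellation'' quantity. For such Birkhoff averages, Parseval together with the Diophantine condition~\eqref{DC} and the uniform Fourier bound of the first step shows that they are $n^{-a}$-close in $L^2(\T^d)$ to their means once $\ell$ is a large enough power of $n$; since these averages are themselves psh and bounded above, the sub-mean-value inequality (applied on a dyadic family of polydiscs) upgrades this to pointwise closeness off a subexponentially small set. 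The avalanche principle then yields the two-sided bound $\abs{u_n(x)-\LE{n}_1(A)}\le C\,n^{-a}$ off a subexponentially small set, \emph{provided} its hypotheses — which require a dominant singular direction at scale $\ell$, i.e.\ a spectral gap — hold off such a set. When the Lyapunov spectrum of $A$ has no gaps I would replace the avalanche principle by an induction over exterior powers whose base case is the scalar determinant cocycle $\det\An{\ell}=\prod_{j}\det A(\cdot+j\om)$; this is the uniform-LDT bookkeeping of~\cite{LEbook}.

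\emph{The main obstacle: the base estimate in the presence of singularities.} At the bottom of this scheme one needs a large deviation estimate for a single subharmonic function on $\T^d$ — one of $\log\norm{A(x)}$, $\log\abs{\det A(x)}$, or $\tfrac1\ell\log\norm{\wedge_j\An{\ell}(x)}$ — and here the genuinely new difficulty appears: in $d>1$ variables such a function may be identically $-\infty$ along a hyperplane of $\T^d$ while being finite almost everywhere (for instance $A(x)$ may vanish identically on a coordinate hyperplane without $\det A$ vanishing identically), which breaks the usual subharmonic LDT. The resolution uses Theorem~\ref{L2:bound}: after a suitable change of coordinates the governing analytic function becomes non-degenerate in one of the variables, so its zero set no longer contains a coordinate hyperplane, and a one-variable {\L}ojasiewicz/Cartan-type lower bound can be applied on generic complex slices and integrated to bound the measure of the set where $\norm{A}$ (respectively $\abs{\det A}$, or the relevant minor) is abnormally small. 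For the iterates one uses that the $-\infty$ set of $\tfrac1\ell\log\norm{\wedge_j\An{\ell}}$ is captured by an explicit analytic function — a minor of $\An{\ell}$, or $\det\An{\ell}=\prod_{j}\det A(\cdot+j\om)$ — whose behaviour near its zeros can therefore be controlled. I expect the hard part of the whole argument to be precisely the \emph{uniformity in $\ell$}: the zero set of $\det\An{\ell}$ changes with $\ell$ and its complexity a priori grows, so the {\L}ojasiewicz exponents and the accompanying constants must be bounded independently of $\ell$; achieving this is what forces the careful study of zeros of analytic functions of several variables and of the singular sets of the associated psh functions.

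\emph{Conclusion.} Assembling the base estimate, its improvement through the avalanche principle (or the exterior-power induction), and the two uniform bounds of the first step yields $\abs{u_n(x)-\LE{n}_1(A)}\le\ldtmeas(A)\,n^{-a(d)}$ off a set of measure $<e^{-n^{b(d)}}$ for all $n\ge n_0(A,t)$; here $n_0$ depends on $A$ and on $t$ through the Diophantine constant entering the Fourier estimate, while $a(d)$ and $b(d)$ depend only on the dimension. Since every constant used depends on $A$ only through quantities that are stable under perturbations of size $\delta(A)$, the same estimate holds with $\An{n}$ replaced by $\Bn{n}$ whenever $\normr{B-A}\le\delta(A)$, which is exactly~\eqref{eq:LDT-intro}.
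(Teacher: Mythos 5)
Your proposal takes a genuinely different route from the paper's for Theorem~\ref{thm:LDT-intro}, and the difference matters because one of your two mechanisms has a gap.

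You propose the classical Goldstein--Schlag scheme: prove a base LDT at a block scale $\ell=\ell(n)\ll n$, decompose $\Bn{n}$ into blocks of length $\ell$, and pass to scale $n$ via the avalanche principle. As you correctly note, the AP needs $\rgap(\Bn{\ell}(x))$ to be uniformly large off a small set, i.e.\ it needs $L_1(A)>L_2(A)$. Your fallback when there is no gap --- an induction over exterior powers with the determinant cocycle as base case --- does not repair this: if the Lyapunov spectrum has no gaps at all, then $\wedge_k A$ has no gap between its top two exponents either, so the AP is unavailable at every exterior-power level, not just the first. The determinant cocycle is scalar and trivially handled, but there is no way to climb back up from it by this scheme. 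So as written, your argument proves the statement only under a spectral gap hypothesis; but Theorem~\ref{thm:LDT-intro} is stated for every $A\in\cocycles_m$, and the paper explicitly remarks, right after the proof of Theorem~\ref{LDTsing-thm}, that no gap is assumed.

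The paper avoids the AP entirely here (it is used only for the \emph{sharper} one-variable LDT in Section~\ref{refinements_1var}). Its mechanism is different in two respects that you do not exploit. First, Proposition~\ref{uplowbounds} establishes a \emph{uniform-in-$n$} two-sided bound on the full-scale function $u_n^B(z)=\frac1n\log\norm{\Bn{n}(z)}$ itself, obtained by Cram\'er's rule: $- C + \frac{1}{n}\sum_{i<n}\log|f_B(T^i z)| \le u_n^B(z)\le C$ with $f_B=\det B$. The negative singularities of $u_n^B$ are thus captured by Birkhoff averages of the fixed analytic function $\log|f_B|$, whose $L^2$-norm is uniformly controlled by the {\L}ojasiewicz estimate (Remark~\ref{r-unif-loj-L2}); this is exactly what makes the hypothesis~\eqref{ulb-qBET} of Proposition~\ref{qBET-main} checkable uniformly in $n$. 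Second, the quantitative Birkhoff ergodic theorem (Theorem~\ref{qBET-thm} / Proposition~\ref{qBET-main}) is applied to $u_n^B$ at the full scale $n$ but with only $R=\intpart{n^{a/(a+1)}}\ll n^a$ shifts, and the \emph{nearly almost invariance} of $u_n^B$ under $T$ (Proposition~\ref{unif-a-inv}, again proved off an exponentially small set via the {\L}ojasiewicz inequality for $f_B$) controls the gap $\bigl|u_n^B(x)-\frac1R\sum_{j<R}u_n^B(T^jx)\bigr|\le C'R/n^a$. Combining the two estimates yields the conclusion with no gap hypothesis, and also eliminates the issue your block decomposition would face of passing from the Diophantine condition on $\om$ to one on $\ell\om$. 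Your treatment of the base obstacle --- the change of coordinates via Theorem~\ref{L2:bound} to kill identically-$-\infty$ slices and the observation that the singularities of $u_\ell$ are captured by minors/determinants of $\An{\ell}$ --- is exactly aligned with the paper's, and that part of your plan is sound. The missing piece is replacing the AP step by the Cram\'er--rule bound on $u_n^B$ plus nearly almost invariance, which is what removes the spectral gap requirement.
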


Once~\eqref{eq:LDT-intro} is established, we only need to verify that we are in the context of the abstract continuity Theorem 3.1 in~\cite{LEbook}, see also Theorem 1.1  in~\cite{LEbook-chap3}. 

We note that the above LDT estimate is of independent interest. Such estimates have been widely used in the study of discrete quasi-periodic Schr\"odinger operators, to establish positivity of Lyapunov exponents, estimates on Green's functions, continuity of the integrated density of states and spectral properties (such as Anderson localization) for such operators (see J. Bourgain's monograph \cite{B}, see also \cite{sK1, sK2} and references therein).

The LDTs proven here, along with the other technical analytic tools, may then prove useful for future projects on topics in mathematical physics related to larger classes of discrete, quasiperiodic operators.

The rest of this paper  is organized as follows. In Section~\ref{psh_functions} we prove general uniform estimates on analytic and pluri subharmonic functions. These abstract results are then applied in Section~\ref{ldt_qp_proof} to quantities related to cocycles iterates, leading to the proof of the LDT. In Section~\ref{cont_le_qp} we explain how our system satisfies the assumptions of the general criterion in Chapter 3 of~\cite{LEbook} (see also~\cite{LEbook-chap3}). Finally, in Section~\ref{refinements_1var} we show that in the {\em one}-variable case, the LDT proven in Section~\ref{ldt_qp_proof}  can be used in an inductive argument that eventually leads to a {\em sharper} LDT, and in turn, to a {\em stronger} modulus of continuity for the Lyapunov exponents.

\section{Estimates on unbounded pluri-subharmonic functions}
\label{psh_functions}

In this section we derive certain uniform estimates on analytic functions of severable variables and on pluri-subharmonic functions. These estimates are of a general nature, and they will be applied in the next section to quantities related to iterates of analytic cocycles. Uniformity is understood relative to some measurements  which are stable under small perturbations of the functions being measured. The main technical difficulties in establishing these estimates are related to the non-trivial nature of the zeros of an analytic function of several variables, and correspondingly, to the unboundedness of the pluri-subharmonic functions we study.

\subsection{The uniform {\L}ojasiewicz inequality}\label{uLojasiewicz}
Throughout this paper, a quantitative description of quasi-analyticity, the {\L}ojasiewicz inequality, will play a crucial role. We make the observation that this property is \emph{uniform} in a small neighborhood of such a function.

\begin{lemma}\label{unif-loj}
Let $f (x) \in \analyticf{d}$ such that $f (x) \not \equiv 0$. Then there are constants $\delta = \delta (f) > 0$, $S = S (f) > 0$ and $b = b (f) > 0$ such that if $g (x) \in \analyticf{d}$ with $\normr{g - f} < \delta$ then
\begin{equation}\label{unif-loj-star}
\abs{ \{ x \in \T^d \colon \abs{g (x) } < t \} } < S \,t^b \quad \text{ for all } t > 0 \,.
\end{equation} 
\end{lemma}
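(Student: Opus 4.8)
The plan is to first establish the estimate \eqref{unif-loj-star} for the single function $f$ itself, and then upgrade it to a uniform statement on a small ball around $f$ by a compactness-and-continuity argument. For the pointwise ({\L}ojasiewicz) inequality for $f$: since $f \in \analyticf{d}$ is real analytic and not identically zero, its zero set on $\T^d$ has measure zero, and more quantitatively the sublevel sets $\{x : \abs{f(x)} < t\}$ shrink polynomially in $t$. This is the classical {\L}ojasiewicz inequality for real analytic functions, combined with a covering argument to pass from the local {\L}ojasiewicz exponent to a global measure estimate $\abs{\{\abs{f} < t\}} \le S_0 t^{b_0}$. Alternatively, since $f$ extends holomorphically to the polyannulus $\strip_r^d$, one can invoke a Cartan-type estimate on the size of sublevel sets of holomorphic functions, or slice in one variable at a time and use the one-variable fact that an analytic function with $N$ zeros has $\{\abs{f} < t\}$ of measure $\lesssim N t^{1/N}$; here the hard input is a uniform-in-$x'$ bound on the number of zeros of $f(x_1, x')$ in $x_1$, which follows from Jensen's formula applied on the annulus $\strip_r$ together with a lower bound $\norm{f}_r \gtrsim \delta_{\min}(f) > 0$ on the sup-norm. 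I expect the cleanest route is the slicing one, giving explicit $b = b(f)$.

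Next, to make the estimate uniform: suppose for contradiction that no such $\delta, S, b$ work. Then there are $g_k \to f$ in $\analyticf{d}$ and $t_k > 0$ with $\abs{\{\abs{g_k} < t_k\}} \ge k\, t_k^{b}$ for every fixed candidate exponent $b$ — one has to be slightly careful here since $b$ is allowed to depend on $f$, so the right formulation is: fix the $b = b(f)$ (and $S = 2 S_0$) produced in the first step, and show \eqref{unif-loj-star} holds for all $g$ with $\normr{g-f}$ small. The key observation is that the constants $S_0, b_0$ in the {\L}ojasiewicz estimate for $f$ are controlled by two "measurements" that are stable under small perturbations: the sup-norm $\normr{f}$ (hence $\normr{g} \le 2\normr{f}$), and a lower bound on how non-degenerate $f$ is — concretely, a lower bound $\delta_{\min}(f) := \sup_{z} \abs{f(z)}$ bounded below, or more precisely the quantity controlling the zero count via Jensen. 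Since $\normr{g - f} < \delta$ implies $\normr{g} \le \normr{f} + \delta$ and $\normr{g} \ge \normr{f} - \delta > \normr{f}/2$ for $\delta$ small, the Jensen-based zero-count bound for the slices $g(x_1, x')$ is uniform over this ball, and running the same slicing argument for $g$ yields $\abs{\{\abs{g} < t\}} \le S t^{b}$ with the same $b$ and a comparable $S$.

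The main obstacle — and the reason this lemma is stated and singled out — is the \emph{several variables} subtlety: an analytic function of several variables can vanish identically on a hyperplane $\{x_1 = c\}$ while not being identically zero, so the naive slicing "$f(\cdot, x')$ has finitely many zeros for each $x'$" can fail on a positive-measure set of $x'$ (indeed on all of them, if $f$ factors through fewer variables after a bad choice of coordinates). The resolution is to \emph{first change coordinates}: by a generic linear (or affine) change of variables on $\T^d$ — or rather, by Theorem~\ref{L2:bound} as advertised in the introduction — one can assume $f$ is, say, "generic" in the last variable, i.e. for a.e. $x'$ the slice $f(\cdot, x')$ is not identically zero and has a uniformly bounded number of zeros; then Fubini reduces the $d$-dimensional measure estimate to integrating the one-variable estimates over $x'$. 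One must check this normalization is itself stable under perturbation (a nearby $g$ is generic in the same coordinate with comparable constants), which again comes down to the lower bound on $\normr{g}$. Modulo that normalization, the rest is the Jensen's-formula zero count plus Fubini plus the elementary one-variable sublevel-set estimate.
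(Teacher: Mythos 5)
Your route is valid but genuinely different from the paper's, and the paper itself acknowledges it as an alternative in Remark~\ref{Loj-d-var-new proof}: change coordinates so that no axis lies in the zero set, slice, apply a one-variable uniform {\L}ojasiewicz estimate (as in \cite{sK1} or \cite{JitMarx}), and integrate by Fubini. The paper's own proof of this lemma does not slice at all: it invokes Lemma~5.1 and Theorem~5.1 of \cite{sK2}, which operate directly in $d$ variables under a \emph{transversality} hypothesis --- at every $x\in\T^d$ some partial derivative $\partial^\alpha f(x)$ with $\alpha$ in a fixed bounded range is bounded below by $c(f)>0$, a condition automatically met by any non-constant analytic $f$ --- and the resulting $S$, $b$ depend only on the derivative order $m(f)$, the lower bound $c(f)$, and an upper bound $A(f)$ on the derivatives; uniformity in $g$ with $\normr{g-f}<\delta$ is then immediate because all three quantities are stable under $\normr{\cdot}$-small perturbations (Cauchy estimates). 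Your slicing route is more elementary in flavor (reduces to one-variable zero counting), but loses explicitness of the constants and requires extra machinery: coordinate change, per-slice Jensen, and a compactness argument in the $x'$ parameter.

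Two concrete flaws to fix in your write-up. First, citing Theorem~\ref{L2:bound} is circular: that theorem's own proof uses the present lemma through Remark~\ref{r-unif-loj-L2}. What you actually need is only the coordinate-change part of its proof --- the existence of $M\in\SL(d,\Z)$ with all columns in $\Sigma_\delta$, so that each axis of the new coordinate system is $\delta$-dense in $\T^d$ and hence cannot be contained in the zero set of $g$ once $C\delta<c<\norm{g}_\infty$ and $\norm{Dg}_\infty<C$. That piece is independent of Lemma~\ref{unif-loj}, so cite the $\Sigma_\delta$ proposition and that argument directly rather than the theorem. Second, in your opening paragraph the Jensen zero count needs a lower bound on the sup-norm of each one-variable \emph{slice} $f(\,\cdot\,,x')$, not on $\sup_z\sabs{f(z)}$ globally; the latter quantity does not control slices, and in fact a slice can vanish identically, which is exactly the pathology you flag later. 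After the coordinate change, the $\delta$-density argument does provide the needed uniform slice lower bound $\sup_t\sabs{g(x_1,\ldots,t,\ldots,x_d)}\ge c-C\delta>0$ for all axes and all $g$ near $f$, and then Jensen on $\strip_r$ gives a uniform zero count and your plan closes.
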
 

\begin{proof}
We may assume that $f (x)$ is not constant, otherwise $f (x) \equiv C$, $\abs{C} > 0$ and \eqref{unif-loj-star} is then obvious. 

{\L}ojasiewicz inequality~\eqref{unif-loj-star} for a \emph{fixed}, non-constant analytic function $f (x)$ of several  
variables has been established for instance in \cite{GS-Holder} (see Lemma 11.4), and for smooth, transversal functions in \cite{sK1} for $d=1$ (see Lemma 5.3) and in \cite{sK2} for $ d> 1$ (see Theorem 5.1). Moreover, the constants $S$ and $b$ in \cite{sK1, sK2} depend \emph{ explicitly} on some measurements of $f$, and it is easy to see that these measurements are uniform.

Assume for simplicity that $d=2$, although a similar argument holds for any $d \ge 1$.
 Then recall from \cite{sK2} that a smooth function $f (x)$ is called \emph{transversal} if for any point $x \in \T^2$ there is a multi-index $\alpha = (\alpha_1, \alpha_2) \in \N^2$, $\alpha \neq (0, 0)$ such that the corresponding partial derivative is non-zero: $\partial^{\alpha} f (x) \neq 0$. Clearly, non-constant analytic functions are smooth and transversal. 

By Lemma 5.1 in \cite{sK2}, for such a function $f$, there are $m = m (f) = (m_1, m_2) \in \N^2, m \neq (0, 0)$ and $c = c (f) > 0$ such that for any $x \in \T^2$ we have
\begin{equation}\label{unif-loj-eq1}
\abs{ \partial^{\alpha} f (x) } \ge c
\end{equation}
for some multi-index $\alpha = (\alpha_1, \alpha_2)$ with $\alpha_1 \le m_1, \alpha_2 \le m_2$.

Let 
\begin{equation}\label{unif-loj-eq2}
A (f) := \max \{ \abs{ \partial^{\alpha} f (x) }  \colon x \in \T^d, \alpha = (\alpha_1, \alpha_2), \alpha_1 \le m_1 + 1, \alpha_2 \le m_2 + 1 \} \,.
\end{equation}

Theorem 5.1 in \cite{sK2} says that
\begin{equation*}
\abs{ \{ x \in \T^2 \colon \abs{f (x) } < t \} } < S t^b \quad \text{ for all } t > 0
\end{equation*} 
where, according to the last line of its proof (see also Remark 5.1) $S = S(f) \sim A(f) \cdot m(f)$ and $b = b(f) = \frac{1}{3^{m(f)}}$.

Therefore, in order to obtain the uniform estimate~\eqref{unif-loj-star}, all we need to show is that the above constants $m = m (f), c = c(f), A = A(f)$ depend uniformly on the function $f$.

Indeed, let $g \in \analyticf{2}$ such that $\normr{g - f} < \delta$. By analyticity, for some constant $B=B(f)$ depending only on $\alpha$ and $r$, 
$$\normz{\partial^\alpha g - \partial^\alpha f} \le B  \,\delta \,,$$
hence if $\alpha = (\alpha_1, \alpha_2)$ with $\alpha_1 \le m_1 + 1, \alpha_2 \le m_2 +1$ and $m = (m_1, m_2) = m (f)$ from above, then
$$\normz{\partial^\alpha g - \partial^\alpha f} \le B (f) \delta = \frac{c(f)}{2}\,,$$
provided $\delta = \delta(f) := \frac{c(f)}{2 B(f)}$.

From~\eqref{unif-loj-eq1} we conclude that if $\normr{g-f} < \delta$, then for every $x \in \T^2$ 
\begin{equation*}
\abs{ \partial^{\alpha} g (x) } \ge \frac{c}{2}
\end{equation*}
holds for some multi-index $\alpha = (\alpha_1, \alpha_2)$ with $\alpha_1 \le m_1, \alpha_2 \le m_2$.

Moreover, for such functions $g$, the upper bound 
$A (g)$ satisfies
$$A (g) \le A(f) + \frac{c}{2} \sim A (f)\,,$$
which concludes the proof of the lemma. 
\end{proof}

\begin{lemma}\label{unif-loj-L2}
Let $f$ be a bounded function satisfying {\L}ojasiewicz inequality with constants $S, b$:
\begin{equation}\label{unif-loj-star2}
\abs{ \{ x \in \T^d \colon \abs{f (x) } < t \} } < S \, t^b \quad \text{ for all } t > 0 \,.
\end{equation}
Then
\begin{equation}\label{unif-loj-L2-star}
\norm{ \log \abs{f} }_{L^2 (\T^d)}   \le C \,,
\end{equation}
where $C$ is a finite  explicit constant depending only on $\norm{f}_{\infty}$, $S$ and $b$, i.e. on some measurements of $f$. 
\end{lemma}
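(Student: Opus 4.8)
The plan is to estimate $\int_{\T^d}(\log\abs{f})^2\,dx$ directly through its distribution function, using that the polynomial shrinking of the sublevel sets of $\abs{f}$ guaranteed by~\eqref{unif-loj-star2} forces the superlevel sets of $\log(1/\abs{f})$ to shrink exponentially, which makes all moments of $\log\abs{f}$ finite with an explicit bound.

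First I would split the torus, which has total Haar mass $1$, into the two regions $\{\abs{f}\ge 1\}$ and $\{\abs{f}<1\}$. On the first region $(\log\abs{f})^2\le\bigl(\max\{\log\norm{f}_{\infty},0\}\bigr)^2$, so this part contributes at most $\bigl(\max\{\log\norm{f}_{\infty},0\}\bigr)^2$ to the integral (the region being empty if $\norm{f}_{\infty}<1$). On the second region $\log\abs{f}<0$, so $(\log\abs{f})^2=g^2$ where $g:=\max\{\log(1/\abs{f}),0\}$, and the layer-cake formula gives
$$\int_{\{\abs{f}<1\}}g^2\,dx=\int_0^\infty\abs{\{x\in\T^d:\,g(x)>\sqrt\lambda\}}\,d\lambda=\int_0^\infty\abs{\{x\in\T^d:\,\abs{f(x)}<e^{-\sqrt\lambda}\}}\,d\lambda.$$
Since $e^{-\sqrt\lambda}\le 1$, the integrand is bounded both by $\abs{\T^d}=1$ and, via~\eqref{unif-loj-star2}, by $S\,e^{-b\sqrt\lambda}$; substituting $\lambda=s^2$ then yields the bound $2\int_0^\infty s\,\min\{1,\,S\,e^{-bs}\}\,ds$, which is an explicit finite constant depending only on $S$ and $b$ (split the integral at $s_0:=\max\{0,\,b^{-1}\log S\}$, using the bound $2s$ on $[0,s_0]$ and $2Ss\,e^{-bs}$ on $[s_0,\infty)$; both pieces are elementary and finite).

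Adding the two contributions gives
$$\norm{\log\abs{f}}_{L^2(\T^d)}^2\le\bigl(\max\{\log\norm{f}_{\infty},0\}\bigr)^2+2\int_0^\infty s\,\min\{1,\,S\,e^{-bs}\}\,ds,$$
which is of the required form; in particular $\log\abs{f}\in L^2(\T^d)$ even though it equals $-\infty$ on the zero set of $f$, which has measure zero by letting $t\to 0$ in~\eqref{unif-loj-star2}. I do not expect a genuine obstacle here: the only point that needs a little care is the integrability of $(\log\abs{f})^2$ near $\{f=0\}$, and this is supplied precisely by the exponential decay of the distribution function of $g$; making the constant fully explicit is then just the routine evaluation of $\int_0^\infty s\,\min\{1,\,S\,e^{-bs}\}\,ds$.
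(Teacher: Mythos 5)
Your proof is correct and essentially the same as the paper's: both bound $\norm{\log\abs{f}}_{L^2}^2$ by separating $\{\abs{f}\ge 1\}$ from $\{\abs{f}<1\}$ and using the {\L}ojasiewicz estimate \eqref{unif-loj-star2} to control the measure of the small-$\abs{f}$ region. The only difference is bookkeeping: the paper sums over dyadic shells $\{2^{-(n+1)}\le\abs{f}\le 2^{-n}\}$, giving the series $S\sum_{n\ge 0}(n+1)^2\,2^{-bn}$, while you integrate the distribution function via the layer-cake formula, giving $2\int_0^\infty s\min\{1,Se^{-bs}\}\,ds$ — two equivalent ways of doing the same estimate.
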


\begin{proof}
The argument is straightforward. From \eqref{unif-loj-star2}, the set $\{ x \colon f (x) = 0 \}$ has zero measure. Split the rest of the phase space into 
\begin{align*}
E := & \{ x \colon \abs{f (x)} \ge 1\} \,,\\ 
E_n := & \{ x \colon \frac{1}{2^{n+1}} \le \abs{f (x)} \le \frac{1}{2^n} \}
\end{align*}
for all $n \ge 0$.

If $x \in E$, then $ \abs{ \log \abs{f(x)} } \le \abs{\log \norm{f}_{\infty}} < \infty$.

If $x \in E_n$, then $ \abs{ \log \abs{f(x)} } \less n+1$. 

Moreover, from \eqref{unif-loj-star2}, $\abs{ E_n} < S \, \left(\frac{1}{2^n}\right)^b = S \, \left(\frac{1}{2^b}\right)^n $.

Then 
\begin{align*}
\norm{ \log \abs{f} }^2_{L^2 (\T^d)}  & = \int_E \abs{ \log{\abs{f}} }^2 + \sum_{n=0}^{\infty} \int_{E_n} \abs{ \log{\abs{f}} }^2 \\
& \le  \abs{\log \norm{f}}^2_{\infty} +  S \, \sum_{n=0}^{\infty}  \frac{(n+1)^2}{(2^b)^n} \\
& =  \abs{\log \norm{f}}^2_{\infty} +  S \, \frac{ 2^{2b} (2^b + 1)}{(2^b-1)^3} \,.
\end{align*}
We then conclude:
$$
\norm{ \log \abs{f} }_{L^2 (\T^d)}  \less \abs{\log \norm{f}_{\infty}} + S (2^b -1)^{- 3/2} \,.  \qquad 
$$ 
\end{proof}

\begin{remark}\label{r-unif-loj-L2} \normalfont
The previous two lemmas imply that if $f \in \analyticf{d}$, $f \not \equiv 0$, then there are constants $\delta = \delta(f), C = C(f) < \infty$ such that 
$$
\norm{ \log \abs{g} }_{L^2 (\T^d)}   \le C 
$$
holds for any $g \in \analyticf{d}$ with $\normr{g-f} < \delta$.

\end{remark}

\smallskip

\subsection{Uniform $L^2$-bounds on analytic functions}\label{unif-l2bounds}

Consider the following norm on measurable functions
$f:\T^d\to\R$ 
$$ \normtwo{f} :=  \sup_{\substack{ x_1,\ldots, x_{j-1}, x_{j+1},\ldots, x_d\in\T\\
1\leq j\leq d } } \left(\int_\T \abs{f(x_1,\ldots, x_{j-1}, t,  x_{j+1},\ldots, x_d)}^2\, dt \right)^{1/2} \;.  $$
We say that a measurable function $f:\T^d\to\R$ is {\em uniformly separately  
$L^2$-bounded} \,  if \, $\normtwo{f}<+\infty$.

\begin{lemma}\label{triplenorm-transl-inv}
Given a translation $T$ on $\T^d$,
for any measurable function $f:\T^d\to\R$,
$\normtwo{f\circ T}=\normtwo{f}$.
\end{lemma}

\begin{proof}
Just use (in each variable) the translation invariance of the Lebesgue measure on $\T$. 
\end{proof}

\begin{definition}
We say that a function $f:\T^d\to \R$ {\em vanishes along an axis}
if there are $1\leq j\leq d$ and $x_1,\ldots, x_{j-1}, x_{j+1},\ldots, x_d\in\T$ so that
$f(x_1,\ldots, x_{j-1}, t, x_{j+1},\ldots, x_d)=0$ for every $t\in\T$.
\end{definition}

\begin{lemma} Given an analytic function $f:\T^d\to\R$,
if $f$ does not vanish along any axis then 
 $\log \abs{f}$ is uniformly separately  $L^2$-bounded.
\end{lemma}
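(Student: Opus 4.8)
The plan is to reduce the $d$-variable statement to a one-variable {\L}ojasiewicz estimate applied uniformly to the ``slice'' functions, and then invoke Lemma~\ref{unif-loj-L2}. Fix $1 \le j \le d$ and a choice of the remaining coordinates $\hat x = (x_1, \ldots, x_{j-1}, x_{j+1}, \ldots, x_d) \in \T^{d-1}$. Write $f_{\hat x}(t) := f(x_1, \ldots, x_{j-1}, t, x_{j+1}, \ldots, x_d)$, a one-variable analytic function on $\T$ (indeed on $\strip_r$). Since $f$ does not vanish along any axis, $f_{\hat x} \not\equiv 0$ for every choice of $j$ and $\hat x$. So each $f_{\hat x}$ satisfies a one-variable {\L}ojasiewicz inequality, and by Lemma~\ref{unif-loj-L2} we would get $\norm{\log\abs{f_{\hat x}}}_{L^2(\T)} \le C(\hat x)$; the whole point is to obtain a bound on $C(\hat x)$ that is \emph{uniform} in $\hat x$ (and in $j$), which then gives $\normtwo{\log\abs f} < \infty$ by definition of the triple norm.

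The key step is therefore the uniformity of the {\L}ojasiewicz constants $S, b$ for the family $\{f_{\hat x}\}$. For a single-variable analytic function the standard argument bounds the number of zeros of $f_{\hat x}$ in $\strip_r$ (with multiplicity) by a Jensen-type count, and that count is controlled by $\log(\norm{f_{\hat x}}_{r}/\abs{f_{\hat x}(z_0)})$ for a suitable reference point $z_0$; the {\L}ojasiewicz exponent $b$ can then be taken to be $1/(\text{number of zeros})$ and $S$ a constant depending on that number and on $\norm{f_{\hat x}}_\infty$. The numerator $\norm{f_{\hat x}}_r \le \norm{f}_r$ is uniformly bounded. The delicate point is bounding $\abs{f_{\hat x}(z_0)}$ from below uniformly in $\hat x$: a priori $\inf_{\hat x} \sup_{t} \abs{f_{\hat x}(t)}$ could be $0$. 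This is exactly the obstacle flagged in the introduction — an analytic function of several variables that is not identically zero may still be ``almost degenerate'' on slices. I expect this to be the main obstacle, and I anticipate the resolution is a compactness/continuity argument: the map $\hat x \mapsto \sup_{t \in \T} \abs{f_{\hat x}(t)}$ is continuous on the compact torus $\T^{d-1}$ and strictly positive (as $f$ vanishes along no axis), hence bounded below by some $c_0 > 0$; similarly the zero count of $f_{\hat x}$ in $\strip_{r/2}$, say, is an upper-semicontinuous integer-valued function of $\hat x$ and hence globally bounded by some $N_0$. With these two uniform quantities in hand, $b \ge 1/N_0$ and $S$ bounded uniformly, so $C(\hat x)$ in Lemma~\ref{unif-loj-L2} is uniformly bounded.

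Concretely I would proceed as follows. First, reduce to a single fixed $j$, say $j=d$, the argument for other $j$ being identical and only finitely many values of $j$ occurring in the supremum defining $\normtwo{\cdot}$. Second, establish the two uniform slice-quantities above ($c_0$ via continuity and positivity on the compact base, $N_0$ via a Jensen/argument-principle count that is semicontinuous in the parameter). Third, feed these into the one-variable {\L}ojasiewicz inequality to obtain $\abs{\{t \in \T : \abs{f_{\hat x}(t)} < s\}} < S\, s^{b}$ for all $s > 0$, with $S$ and $b$ independent of $\hat x$. Fourth, apply Lemma~\ref{unif-loj-L2} in the single variable $t$ to conclude $\norm{\log\abs{f_{\hat x}}}_{L^2(\T)} \le C$ with $C$ depending only on $\norm{f}_\infty$, $S$, $b$. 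Taking the supremum over $\hat x$ and over $j$ yields $\normtwo{\log\abs f} \le C < \infty$, which is the assertion. One mild technical care point: one should work with the extension of $f$ to a strictly smaller polystrip $\strip_{r'}^d$, $r' < r$, to have room for Cauchy estimates and to ensure the slice norms and zero counts behave continuously in $\hat x$; this costs nothing since the hypotheses are stated on $\strip_r^d$.
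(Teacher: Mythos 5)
Your approach is correct and shares the paper's overall strategy (reduce to one-variable slices, then obtain uniformity over the parameter by compactness), but the two proofs differ in where the compactness argument lives. The paper works in the \emph{function space}: it notes that $\{\varphi_{j;\hat x} : \hat x\in\T^{d-1}\}$ is a compact subset of $C^\omega_r(\T,\R)$ (continuous image of the compact torus), and then invokes Remark~\ref{r-unif-loj-L2}, which already packages Lemma~\ref{unif-loj} and Lemma~\ref{unif-loj-L2} into a \emph{locally uniform} $L^2$ bound on $\log\abs{g}$ for $g$ near any fixed nonzero analytic function; a finite subcover of the compact slice family then gives the global uniform bound immediately. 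You instead work in the \emph{parameter space} $\T^{d-1}$: you reconstruct the uniform {\L}ojasiewicz constants $(S,b)$ by hand, bounding the slice zero counts via a Jensen count and bounding the slice sup-norms from below via continuity and positivity on the compact base, and only then apply Lemma~\ref{unif-loj-L2}. Both routes work, but yours is more labour-intensive: the upper-semicontinuity of the zero count needs the Hurwitz-type care you flag (pass to a strictly smaller annulus so no zeros sit on the boundary contour), and the passage from a uniform zero bound $N_0$ and uniform lower bound on $\sup_t\abs{f_{\hat x}(t)}$ to uniform $(S,b)$ amounts to redoing the one-variable {\L}ojasiewicz estimate from scratch rather than citing the already-uniform Lemma~\ref{unif-loj}. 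The paper's version is more economical precisely because it reuses the one-variable uniformity already established in Subsection~\ref{uLojasiewicz}.
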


\begin{proof}
The assumption implies that for all
$1\leq j\leq d$ and for all $x_1,\ldots, x_{j-1}$, 
$x_{j+1},\ldots, x_d\in\T$,
the analytic function
$$\varphi_{j; x_1,\ldots, x_{j-1}, x_{j+1},\ldots, x_d}(t):= f(x_1,\ldots, x_{j-1}, t, x_{j+1},\ldots, x_d)$$ 
is not identically zero.
Since clearly for all $1\leq j\leq d$, the set 
$$\{ \varphi_{j; x_1,\ldots, x_{j-1}, x_{j+1},\ldots, x_d}(t) \colon x_1,\ldots, x_{j-1}, x_{j+1},\ldots, x_d\in\T \} $$
is compact, applying Remark~\ref{r-unif-loj-L2} (with $d=1$) to the one-variable functions above, we conclude that there is $C = C(f) < \infty$ such that $\norm{\log\abs{ \varphi_{j; x_1,\ldots, x_{j-1}, x_{j+1},\ldots, x_d}}}_{L^2} < C$, which shows that  $\log \abs{f}$  is uniformly separately  $L^2$-bounded.

We note that in fact more can be shown, namely that  for all $1\leq j\leq d$, the function $H_j:\T^{d-1}\to \R$,
$H_j(x_1,\ldots, x_{j-1}, x_{j+1},\ldots, x_d):= \norm{\log\abs{ \varphi_{j; x_1,\ldots, x_{j-1}, x_{j+1},\ldots, x_d}}}_{L^2}$
is continuous, hence it has a maximum value, which leads to the same conclusion. 
\end{proof}

\begin{theorem}
\label{L2:bound}
For any analytic function $f\in\analyticf{d}$ with $f\not{\!\!\equiv} 0$,
there are $\varepsilon=\varepsilon(f,r)>0$, $C=C(f,r)>0$ and
there is a matrix $M\in\SL(d,\Z)$ such that for any
$g\in\analyticf{d}$ with $\norm{f-g}_r<\varepsilon$,
and for any $x_1,\ldots, x_{j-1}, x_{j+1}, \ldots, x_d\in\T$ with $1\leq j\leq d$,
$$ \norm{\log \abs{g \circ M (x_1,\ldots, x_{j-1},\, \cdot\, , x_{j+1},\ldots, x_d)} }_{L^2_{x_j}(\T)}\leq C\;.$$
In other words,  up to some
linear change of coordinates in the torus $\T^ d$, the functions
 $\log \abs{g}$ with $g$ near $f$ are uniformly separately  $L^2$-bounded.
\end{theorem}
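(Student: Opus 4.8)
The plan is to reduce Theorem~\ref{L2:bound} to the previous lemma by showing that, after a suitable linear change of coordinates $M \in \SL(d,\Z)$, the function $f \circ M$ does not vanish along any axis, and that this non-vanishing persists under small perturbations in the $\normr{\cdot}$ norm. The strategy is driven by the observation that an analytic function vanishing along an axis is a highly non-generic phenomenon that can be destroyed by a generic linear substitution.

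First I would analyze the zero set of $f$. Since $f \not\equiv 0$ on $\strip_r^d$, its zero set $Z = \{z : f(z) = 0\}$ is a proper analytic subvariety. The key point is that $f \circ M$ vanishes along the $j$-th axis precisely when, for some choice of the other coordinates, the whole complex line in the $j$-th direction (pushed through $M$) lies in $Z$; equivalently, $Z$ contains a translate of the line $M(\mathbb{C} e_j)$. A proper analytic variety cannot contain a family of parallel lines in $d$ linearly independent directions of positive measure simultaneously for \emph{every} generic direction — indeed, if $Z$ contains a positive-measure family of translates of a fixed line $\ell$, then $Z$ is ``cylindrical'' in the direction $\ell$, and a variety can be cylindrical in at most finitely many (in fact, a proper subspace of) directions unless it is all of $\strip_r^d$. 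So for all but finitely many integer directions $v \in \Z^d$, the variety $Z$ is not cylindrical along $v$. Choosing $M \in \SL(d,\Z)$ whose columns are such non-cylindrical primitive directions (this is possible by a standard argument: pick a primitive vector avoiding the bad set, complete to a basis, iterate) ensures $f \circ M$ does not vanish along any axis.

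Next I would upgrade this to a quantitative, perturbation-stable statement. For the fixed $M$ just chosen, the previous lemma (applied to $f \circ M$, which is analytic and does not vanish along any axis) gives that $\log|f \circ M|$ is uniformly separately $L^2$-bounded. To get uniformity over $g$ near $f$, I would invoke the continuity/maximum remark at the end of that lemma's proof: for each $j$, the function $H_j$ on $\T^{d-1}$ recording the one-variable $L^2$-norm of $\log|\varphi_{j;\cdot}|$ is continuous, hence bounded, and by the \emph{uniform} {\L}ojasiewicz estimate (Lemma~\ref{unif-loj}) together with Lemma~\ref{unif-loj-L2} — more precisely, Remark~\ref{r-unif-loj-L2} applied fiberwise in the one-variable setting — the bound survives a small $\normz{\cdot}$-perturbation of each slice. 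Since $\normr{f - g} < \varepsilon$ controls the restriction of $g \circ M$ to each complex one-variable annulus uniformly in the frozen coordinates (the change of variables $M$ only rescales the polydisc/annulus by a fixed amount depending on $M$ and $r$), and since the slices of $f \circ M$ form a compact family of non-identically-zero analytic functions on $\strip_{r'}$ for some $r' = r'(M,r) > 0$, a single pair of constants $(\delta, C) = (\delta(f,r), C(f,r))$ works for all slices and all $g$ within $\varepsilon := \delta$ of $f$. Setting this $C$ as the constant in the theorem finishes the argument.

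The main obstacle I expect is the first step: rigorously establishing that for all but finitely many (or: for a positive-density set of) integer directions, the zero variety $Z$ of $f$ is not cylindrical, and that one can then select \emph{columns of a single $\SL(d,\Z)$ matrix} all avoiding the bad directions. The cleanest route is probably to argue one variable at a time — reduce to showing that a non-trivial analytic function cannot have all its ``$x_j$-slices'' identically zero after a generic shear fixing the other coordinates — using that the set of shears for which a given slice degenerates is itself a proper analytic (hence measure-zero) condition, and then combining with Fubini and a Baire/measure argument to find a common good $M$. Care is needed because the perturbation $g$ is only close to $f$, not equal, so the non-cylindricity must be chosen with a quantitative margin (via the uniform {\L}ojasiewicz constants) rather than qualitatively; handling that margin uniformly across the compact family of slices is the delicate bookkeeping.
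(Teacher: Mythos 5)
Your approach diverges from the paper's at its central step — the mechanism for producing the good change of coordinates $M$ — and that divergence introduces a genuine gap. The paper's argument never looks at the structure of the zero variety $Z(f)$. Instead it picks constants $0 < c < \norm{f}_\infty$ and $\norm{Df}_\infty < C$, observes that these are open conditions in $g$, chooses $\delta$ with $C\delta < c$, and then takes $M\in\SL(d,\Z)$ whose columns all lie in $\Sigma_\delta$ (so that the corresponding closed geodesics are $\delta$-dense in $\T^d$; such $M$ exists by the primitive-matrix/irreducible-trinomial construction given next in the paper). The punchline is a one-line mean-value argument: if $g|_\gamma\equiv 0$ on a $\delta$-dense geodesic $\gamma$ and $\norm{Dg}_\infty<C$, then $\norm{g}_\infty\le C\delta < c$, contradiction. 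Because every ingredient is an open, quantitative condition, the conclusion ``$g\circ M$ does not vanish along any axis'' holds simultaneously for all $g$ in the $\varepsilon$-neighborhood of $f$, and $M$ depends only on $(c,C,\delta)$, hence only on $f$ through two trivial bounds. The perturbation stability you flag as ``delicate bookkeeping'' is thus handled for free.

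By contrast, your reduction to ``$Z(f)$ is not cylindrical in the directions given by the columns of $M$'' has two problems. First, cylindricity (existence of a positive-measure family of parallel translates of a line inside $Z$) is strictly stronger than what must be excluded: ``vanishes along an axis'' only requires that one translate of the axis be contained in $Z$ (see the paper's definition). So ``$Z$ is not cylindrical along $v$'' does not imply ``no translate of the $v$-axis lies in $Z$''. To patch this you would need, e.g., a finiteness-of-irreducible-components argument for $Z(f)$ on the closed polyannulus; this is plausible but you have not established it, and you yourself identified it as the main obstacle. Second, and more serious for the theorem as stated, whatever finiteness you establish concerns $Z(f)$ only; nothing you have written controls $Z(g)$ for $g$ near $f$, and the set of axes along which $g$ vanishes can jump discontinuously as $g$ varies. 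The paper's $\delta$-density/MVT trick is precisely a quantitative surrogate for ``$g$ has no vanishing axis'' that is manifestly open in $g$; you would need to invent an analogous quantitative margin for your variety-theoretic route, and the uniform {\L}ojasiewicz inequality alone does not obviously supply it without circularity (it quantifies smallness of sublevel sets, not non-vanishing on a specified circle). The final compactness/Remark~\ref{r-unif-loj-L2} step of your proposal matches the paper; the gap is entirely in how one gets to the point where that step may be applied uniformly.
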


\smallskip

Given $\delta>0$, define
$ \Sigma_\delta$ to be the set of all $k\in\Z^d$ such that 
any geodesic circle parallel to the vector through $k$  is  
$\delta$-dense in $\T^ d$.

%
%
%
%

%
%
%
%

\begin{proposition} Given $\delta>0$, there is a matrix $M\in\SL(d,\Z)$
such that every column of $M$ is in $\Sigma_\delta$.
\end{proposition}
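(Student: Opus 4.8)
The plan is to reduce this to a single statement about lattice vectors: a vector $k \in \Z^d$ lies in $\Sigma_\delta$ precisely when the one-parameter subgroup $\{t k \bmod \Z^d : t \in \R\}$ is $\delta$-dense in $\T^d$. By Weyl equidistribution (or the classical Kronecker theorem applied to the direction $k$), this happens as soon as the coordinates $k_1, \ldots, k_d$ together with $0$ — equivalently, the ratios among the $k_i$ — are ``sufficiently incommensurable''; more usefully, a \emph{sufficient} condition is that the $k_i$ are pairwise coprime and each $\abs{k_i}$ is large in a manner that forces any orbit segment of length one to come within $\delta$ of every point. So the first step is to record a quantitative lemma: there exists $N = N(\delta, d)$ such that if $k = (k_1, \ldots, k_d) \in \Z^d$ has pairwise coprime entries with $\abs{k_i} \ge N$ for all $i$, then $k \in \Sigma_\delta$. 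This follows by an elementary counting/pigeonhole argument: the geodesic circle through $k$ has, modulo $\Z^d$, length comparable to $\abs{k}$, it wraps around the $j$-th coordinate circle $\abs{k_j}$ times, and pairwise coprimality guarantees the projections to coordinate pairs are also long circles, so for $N$ large the circle forms a fine enough net.

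Once we have such a set of ``good directions'' available in abundance, the second step is to build the matrix. We need $M \in \SL(d,\Z)$ whose every column lies in $\Sigma_\delta$. Start from the identity matrix $I$ and perform integer column operations that keep the determinant equal to $1$ while pushing each column into $\Sigma_\delta$. Concretely, I would choose the columns one at a time. For the first column, pick any primitive vector $v_1 \in \Sigma_\delta$ (e.g. $v_1 = (p_1, p_2, \ldots, p_d)$ for distinct large primes $p_i$; the coprimality is automatic and makes $v_1$ primitive, hence extendable to a $\Z$-basis). Extend $v_1$ to a basis, then adjust the remaining basis vectors: replacing the $j$-th column $v_j$ by $v_j + L v_1$ for a large integer $L$ does not change the determinant, and for $L$ large the resulting vector is dominated by $L v_1$; a short perturbation argument shows that adding a large multiple of a vector already in $\Sigma_\delta$ lands back in $\Sigma_\delta$ (the direction is nearly that of $v_1$, and $\Sigma_\delta$ is ``open at infinity'' in the relevant sense — any direction sufficiently close to a $\delta/2$-dense direction is $\delta$-dense, provided the vector is long enough to realize the density). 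Iterating, we can move all columns into $\Sigma_\delta$ while preserving $\det M = 1$.

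The cleanest route for the perturbation claim, which I expect to be the main technical point, is the following: show that $\Sigma_{\delta}$ contains a full-measure-density set of directions in a way that is stable under the operations above. Specifically, I would prove that if $w \in \Sigma_{\delta/2}$ and $u \in \Z^d$ is fixed, then $w + L u \in \Sigma_\delta$ for all sufficiently large $L$ (depending on $w, u, \delta$). This is because the geodesic through $w + Lu$, parametrized on $[0,1]$, differs from (a reparametrization of) the geodesic through $w$ by an error that is $O(\abs{u}/L)$ uniformly once we rescale time, and a circle that is $\delta/2$-dense remains $\delta$-dense under a uniform $C^0$-perturbation of size $< \delta/2$. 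Combining: pick $v_1 \in \Sigma_{\delta/2}$ primitive, complete to $v_1, e_2, \ldots, e_d$ a unimodular basis, then successively replace $e_j \mapsto e_j + L v_1$ with $L$ large enough (and common) that every column lands in $\Sigma_\delta$; the determinant stays $1$ throughout, so the resulting $M \in \SL(d,\Z)$ has all columns in $\Sigma_\delta$.

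The main obstacle is the quantitative equidistribution estimate underlying $\Sigma_\delta$: one must be careful that ``$\delta$-density of the geodesic circle'' really does hold for the explicit directions chosen, with an effective bound on how large the entries must be. I would handle this either by invoking a standard effective Kronecker/Weyl bound (continued-fraction or Erd\H{o}s--Tur\'an type) applied to each coordinate projection, or — if we only need existence and not effectivity — by a softer compactness argument: the set of directions whose geodesic is $\delta$-dense is open and dense in $\mathbb{P}^{d-1}(\R)$, contains all directions with rationally independent coordinates, and rational directions with large, highly coprime entries approximate such irrational directions well enough that the density is inherited. Either way, once the supply of good lattice directions is established, the linear-algebra construction of $M$ is routine.
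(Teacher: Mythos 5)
Your overall strategy is genuinely different from the paper's and has a real chance of working, but the concrete ``good direction'' criterion you propose in the first step is false, and the error matters.

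You claim that pairwise coprime entries with $\abs{k_i}\ge N$ force $k\in\Sigma_\delta$ for $N=N(\delta,d)$ large. Take $d=3$ and $k=(N,N+1,N+2)$ with $N$ odd: the entries are pairwise coprime and as large as you like, yet $1\cdot k_1-2\cdot k_2+1\cdot k_3=0$, so the geodesic circle through $k$ lies entirely in the measure-zero subtorus $\{x_1-2x_2+x_3\equiv 0\}$ and is never $\delta$-dense once $\delta$ is below a fixed threshold. The genuine obstruction is integer linear relations with \emph{small coefficients} — equivalently, a short nonzero vector in the lattice $k^\perp\cap\Z^d$ — and ``pairwise coprime'' says nothing about this. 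So the elementary pigeonhole lemma you rely on does not exist in the form stated, and the ``projections to coordinate pairs are long circles, hence the circle is a fine net in $\T^d$'' heuristic is simply wrong in $d\ge3$. Your softer alternative (approximate a totally irrational direction by a lattice vector of large norm) is the right idea, but there too ``highly coprime'' is a red herring: the actual hypotheses needed are that the projective direction $k/\abs{k}$ lies near a direction whose line is dense and that $\abs{k}$ is large, and you would need to make that compactness/Hausdorff-limit argument precise.

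Your second step is sound once the first is fixed. If $w\in\Sigma_{\delta/2}$ is primitive and $u\in\Z^d$, then over each sub-interval of length $1/L$ the curve $t\mapsto t(u+Lw)$ traces one full lap of a translate of the $w$-circle with a drift of size $\abs{u}/L$, so $u+Lw\in\Sigma_\delta$ for $L>2\abs{u}/\delta$ (note your lemma statement has the roles of $u$ and $w$ reversed from the way you later use it — the new column is $v_j+Lv_1$, i.e.\ $u+Lw$, not $w+Lu$). Completing a primitive $v_1$ to a unimodular basis and shearing the other columns into $\Sigma_\delta$ preserves the determinant and finishes the argument. For comparison, the paper avoids the whole issue of producing good directions one at a time: it takes a single primitive matrix $M\in\SL(d,\Z)$ with irreducible characteristic polynomial, shows via the absence of rational $M$-invariant subspaces that the Perron eigendirection $\omega$ spans a dense line in $\T^d$, and then uses primitivity to conclude that every column of $M^n$ simultaneously converges in direction to $\omega$ (with norm going to infinity), so for large $n$ all columns of $M^n$ land in $\Sigma_\delta$. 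That route sidesteps any explicit density criterion for lattice vectors, which is exactly where your proposal goes wrong.
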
 

\begin{proof}
Take any matrix $M\in\SL(d,\Z)$ with non-negative entries which is primitive, and has a
characteristic polynomial $p_M(\lambda)=\det(M-\lambda I)$  irreducible over $\Z$ (see Lemma~\ref{example}).
Then $M$ has a dominant eigenvector $\omega\in{\rm int}(\R^d_+)$.
Consider the canonical projection $\pi:\R^d\to\T^d$ and
define $H=\overline{\pi (\langle \omega\rangle )}$ (topological closure in $\T^d$),
where $\langle \omega\rangle =\{\, t\,\omega\,:\, t\in\R\,\}$.
Define also $\mathfrak{h} = \pi^{-1}(H)$.
The set $H$ is a compact connected subgroup of $\T^ d$,
while $\mathfrak{h}$ is a linear subspace of $\R^ d$, the Lie algebra of $H$.
The group $H$ is invariant under the torus  automorphism  $\phi_M:\T^d\to \T^d$,
$\phi_M(x)=M x\, ({\rm mod}\,\Z^d)$, and hence (by restriction and quotient) the map $\phi_M$ induces the  tori automorphisms
$\phi_H \colon H\to H$ and $\widehat{\phi}_H:\T^ d/H\to \T^ d/H$.
Thus $M\,\mathfrak{h}=\mathfrak{h}$, and the linear maps of these tori automorphisms
at the level of Lie algebras are
$\Phi_\mathfrak{h}:\mathfrak{h}\to \mathfrak{h}$, $\Phi_\mathfrak{h}(x)=M\,x$,
and
$\widehat{\Phi}_\mathfrak{h}:\R^d/\mathfrak{h}\to \R^d/\mathfrak{h}$,
$\widehat{\Phi}_\mathfrak{h}(x+\mathfrak{h})=M\,x + \mathfrak{h}$.
The characteristic polynomials of these linear automorphisms have
integer coefficients because they are associated with tori automorphisms.
Finally, because $\mathfrak{h}$ is invariant under $M$, the characteristic polynomial
$p_M(\lambda)$ factors as the product of the characteristic polynomials
of $\Phi_\mathfrak{h}$ and $\widehat{\Phi}_\mathfrak{h}$.
Since the polynomial $p_M(\lambda)$ is irreducible,
we conclude that $H=\T^d$, which implies that the line spanned by $\omega$ is dense in $\T^d$.
Because $M$ is irreducible, the  lines spanned by the columns of $M^n$
approach the line  spanned by $\omega$ as $n\to+\infty$.
Hence for $n$ large enough, every column of $M^n$ lies in $\Sigma_\delta$. 
\end{proof}
Consider the following family of matrices in $\SL(d,\Z)$
\begin{equation}\label{irred:hyp:family}
 M_d= \left(\begin{array}{cccccc}
1 & 0 & 0 & \cdots & 0 & 1  \\ 
1 & 1 & 0 & \cdots & 0 & 1  \\
0 & 1 & 1 & \cdots & 0 & 0  \\
\vdots & \vdots & \vdots &  \ddots     & \vdots & \vdots  \\
0 & 0 & 0 &  \cdots     & 1 & 0  \\
0 & 0 & 0 &  \cdots     & 1 & 1  \\
\end{array}
\right)  \; \text{ if }\; d>2,\quad 
M_2= \left(\begin{array}{cc}
1 & 1 \\ 1 & 2
\end{array}
\right)
\end{equation}


\begin{lemma}\label{example}
The matrix $M_d$ is primitive and its characteristic polynomial is irreducible over $\Z$.
\end{lemma}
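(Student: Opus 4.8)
The statement to prove is that the matrix $M_d$ from~\eqref{irred:hyp:family} is primitive (some power has all entries strictly positive) and that its characteristic polynomial $p_{M_d}(\lambda)$ is irreducible over $\Z$. I will treat the two assertions separately, since they use different ideas.

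For \emph{primitivity}, I would interpret $M_d$ as the adjacency matrix of a directed graph $G$ on vertices $\{1,\ldots,d\}$ and observe that primitivity is equivalent to $G$ being strongly connected and aperiodic (i.e., $\gcd$ of cycle lengths equal to $1$). Reading off the nonzero entries: there is an edge $i\to i$ for every $i$ (the diagonal of $1$'s), edges $i\to i+1$ for $1\le i\le d-1$ (the subdiagonal $1$'s), plus the two entries in the last column giving edges $d\to 1$ and $d\to 2$. The subdiagonal edges together with $d\to 1$ form a Hamiltonian cycle through all $d$ vertices, so $G$ is strongly connected; and the presence of a self-loop $1\to1$ already forces the period to be $1$. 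Hence $G$ is primitive, and therefore so is $M_d$ — in fact $M_d^{\,N}>0$ for $N$ of size $O(d)$. (The small case $M_2=\begin{pmatrix}1&1\\1&2\end{pmatrix}$ is checked directly: $M_2>0$ already.) This part is routine.

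For \emph{irreducibility} of $p_{M_d}(\lambda)=\det(\lambda I - M_d)$ (up to sign), the plan is first to compute the polynomial explicitly by a Laplace/cofactor expansion exploiting the near-triangular structure: after subtracting $\lambda$ from the diagonal, the matrix $\lambda I - M_d$ is lower-bidiagonal except for the last column, so expanding along the last column or the first row should yield a clean closed form, something like $p_{M_d}(\lambda) = (\lambda-1)^d - (\lambda-1)^{k} - 1$ or a similar sparse trinomial in $\mu:=\lambda-1$; the exact shape must be pinned down by tracking which cofactors survive. Once the polynomial is in hand as an explicit (sparse) integer polynomial, I would establish irreducibility by one of the standard tools: reduce mod a small prime and factor there, or — more likely to work uniformly in $d$ — apply an Eisenstein-type or Perron-type criterion to $p_{M_d}(\lambda)$ or to $p_{M_d}(\lambda+c)$ for a suitable shift $c$, using that $M_d$ is primitive (Perron--Frobenius gives a dominant real eigenvalue $>1$ which, by Perron's irreducibility criterion, is enough when the trailing coefficient is $\pm1$ and the subleading coefficients are suitably large). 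Since $M_d\in\SL(d,\Z)$, the constant term of $p_{M_d}$ is $\pm1$, which is exactly the hypothesis needed for Perron's criterion and also rules out rational roots other than $\pm1$, each of which can be excluded by direct substitution.

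The \textbf{main obstacle} is the irreducibility step, and specifically getting a proof that works for \emph{all} $d$ at once rather than case by case: one must identify the exact sparse form of $p_{M_d}$ and then verify that the chosen criterion (Perron, Eisenstein after a shift, or a mod-$p$ argument) applies for every $d\ge 2$ (with $d=2$ possibly handled by hand). If the closed form turns out to be a trinomial $\mu^d - \mu^j - 1$ in $\mu=\lambda-1$, irreducibility of such trinomials is classical (Selmer-type results), and I would invoke or reprove that; the risk is that the true polynomial is messier, in which case the Perron criterion — leveraging primitivity, which we have just proved, together with $\det M_d=1$ — is the most robust fallback.
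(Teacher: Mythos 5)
Your plan is essentially the paper's proof. The paper computes $p_{M_d}(\lambda)=\det(M_d-\lambda I)=(-1)^d\bigl((\lambda-1)^d-\lambda\bigr)$ by Laplace expansion, substitutes $\mu=\lambda-1$ to get the trinomial $\mu^d-\mu-1$, and then cites Selmer's theorem for irreducibility of $\mu^d-\mu-1$ over $\Z$; this is exactly the route you anticipate as the likely outcome. For primitivity, you argue via the digraph of $M_d$ (Hamiltonian cycle $1\to2\to\cdots\to d\to1$ from the subdiagonal and $M_{1,d}$, together with the self-loop from the diagonal giving aperiodicity); the paper instead observes $M_d\geq I+P$ for a cyclic permutation matrix $P$ and expands $(I+P)^d=\sum_j\binom{d}{j}P^j$, which has all positive entries. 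These are the same underlying observation in two dialects, and either works.

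One caution on your fallback: Perron's irreducibility criterion requires $|a_{n-1}|>1+|a_{n-2}|+\cdots+|a_0|$, a coefficient inequality that $\mu^d-\mu-1$ does not satisfy (its subleading coefficient is $0$), and a Perron--Frobenius dominant eigenvalue plus $\det=\pm1$ is by itself far from sufficient for irreducibility. So the Selmer route you flag as primary is the one that actually closes; the ``Perron-type'' fallback would not. You should also pin down the determinant computation: your statement that $M_d\in\SL(d,\Z)$ forces constant term $\pm1$ is what the paper verifies directly via $p_d(0)=1$, and it is worth doing explicitly since primitivity alone does not give it.
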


\begin{proof} Computing the characteristic polynomial
of the matrix $M_d$ with the Laplace determinant rule, we obtain
$$p_d(\lambda)=\det(M_d-\lambda I)= (-1)^d ((\lambda-1)^d-\lambda)\;.$$
In particular, $\det (M_d) = p_d(0)=(-1)^d(-1)^d = 1$.
Considering the permutation matrix
$$  P= \left(\begin{array}{cccccc}
0 & 0 & 0 & \cdots & 0 & 1   \\ 
1 & 0 & 0 & \cdots & 0 & 0   \\
0 & 1 & 0 & \cdots & 0 & 0  \\
\vdots & \vdots & \vdots &  \ddots     & \vdots &\vdots \\
0 & 0 & 0 &  \cdots   & 0 &   0  \\
0 & 0 & 0 &  \cdots   & 1 &   0  
\end{array}
\right)  $$
we have $M_d\geq I+P$, where the partial order $\geq $ refers to component-wise
comparison of the entries of the two matrices. Hence
$$(M_d)^d \geq (I+P)^d=\sum_{j=0}^d \binom{d}{j}\,P^j \;, $$
and since the right-hand-side has all entries positive, it follows that
$M_d$ is primitive.
%

Writing $\mu=\lambda-1$, we get
$(\lambda-1)^ d-\lambda = \mu^d-\mu-1$. Hence the irreducibility of
$p_d(\lambda)$ is equivalent  to that of $\mu^ d-\mu-1$, which was established
to hold for every $d\geq 2$ by Selmer (see Theorem 1 in~\cite{S56}). 
\end{proof}

\begin{proof}[Proof of Theorem~ \ref{L2:bound}]
Let $f:\T^d\to\R$ be an analytic function  such that $f\not{\!\!\equiv} 0$.
Take constants $c$ and $C$ such that $0<c<\norm{f}_\infty$ and $\norm{Df}_\infty<C<+\infty$.
Let $\varepsilon>0$ be such that for every $g\in\analyticf{d}$ with $\norm{g-f}_r < \delta$, 
one  still has $0<c<\norm{g}_\infty$ and $\norm{Dg}_\infty<C$.
Choose $\delta>0$ such that $C\,\delta<c$, and pick a matrix $M\in\SL(d,\Z)$
such that every column of $M$ lies in $\Sigma_\delta$.
Then any axis $\gamma$ of $\T^d$ along the coordinate system defined by $M$
has homotopy type in $\Sigma_\delta$. We cannot have $g\vert_\gamma\equiv 0$,
because the $\delta$-density of $\gamma$ implies the contradiction
$c<\norm{g}_\infty \leq C\,\delta <c$. Therefore, $g\circ M$ does not 
vanish along any axis, which implies that
 $\log \abs{g\circ M}$ is uniformly separately  $L^2$-bounded.
 
 As before, by a compactness argument, Remark~\ref{r-unif-loj-L2} (with $d=1$) implies that for some constant $C = C(f) < \infty$, the inequality
  $$ \norm{\log \abs{g(x_1,\ldots, x_{j-1},\, \cdot\, , x_{j+1},\ldots, x_d)} }_{L^2_{x_j}(\T)} < C $$
holds for all $(x_1,\ldots, x_{j-1}, x_{j+1},\ldots, x_d)\in\T^{d-1}$
 and $g\in \analyticf{d}$ near $f$.

Therefore, $\normtwo{ \log \abs{g} }$ is
  uniformly bounded in a neighborhood of $f$. 
\end{proof}

\begin{remark} \label{to-change-or-not-to-change-vars} \normalfont
There is an alternative argument that does not require a change of coordinates, but which is technically much more complicated. It involves proving, using the uniform {\L}ojasiewicz inequality~\eqref{unif-loj-star}, that for any $g \in \analyticf{d}$ which is close enough to an $f \in \analyticf{d}$ with $f \not\equiv 0$, and for any $n \gg 1$, 
$$ \norm{\log \abs{g(x_1,\ldots, x_{j-1},\, \cdot\, , x_{j+1},\ldots, x_d)} }_{L^2_{x_j}(\T)}\le n$$
holds for all $(x_1,\ldots, x_{j-1},\,  x_{j+1},\ldots, x_d) \in \T^{d-1}$ outside of a set of measure $ < e^{- c n^{1/2}}$.

In other words, without a change of variables, while the uniform $L^2$-bound in Theorem~\ref{L2:bound} may not hold,  we still get a polynomial bound off of an exponentially small set of inputs and in each variable.  When applying these estimates to quantities related to cocycles iterates,  the number $n$ above will be correlated with the number of iterates, and this non-uniform but {\em polynomial} $L^2$-bound will be manageable. 

\end{remark}

\begin{remark} \label{Loj-d-var-new proof} \normalfont
Besides being a very helpful technical tool in this paper, the fact that after a change of coordinates, a two variables analytic function which is not identically zero, will not vanish identically along any horizontal or vertical  line, could be of independent use elsewhere. 

We note here, for instance,  that it can be used to derive the {\L}ojasiewicz inequality in two variables from the one-variable statement. That is because after a change of coordinates, the one-variable statement is applicable along {\em any} horizontal line, hence via a compactness argument and Fubini, the two-variables statement follows. 

Of course, once one has a {\em uniform} {\L}ojasiewicz inequality in one variable (see \cite{sK1} or \cite{JitMarx}), this argument also provides a uniform statement in two-variables. 

The several (instead of two) variables situation is similar.

\end{remark}

\smallskip

\subsection{Estimates on unbounded subharmonic functions}\label{estimates_sh}
In this subsection we review some crucial estimates on \index{Subharmonic function} subharmonic functions. That is, we list the {\em one variable} tools to be employed later in the derivation of the base LDT estimates for pluri-subharmonic observables.

We begin by reminding the reader some elementary facts about subharmonic functions.
She may consult the monographs \cite{HK-book} or  \cite{Levin} for the classical theory. 

\begin{definition}\label{sh:def}
Let $\Omega$ be a domain of $\C$. A function $u \colon \Omega \to [ - \infty, \infty)$ is called subharmonic in $\Omega$ if for every $z \in \Omega$, $u$ is upper semicontinuous at $z$ and it satisfies the sub-mean value property: 
$$u (z) \le  \int_{0}^{1} u (z + r e (\theta)) d \theta\,,$$
for some $r_0 (z) > 0$ and for all $r \le r_0 (z)$.
\end{definition}

Basic examples of subharmonic functions are $\log \sabs{z - z_0}$ or more generally, $\log \sabs{f (z)}$ for some analytic function $f (z)$ or $\int \log \sabs{z - \zeta} d \mu (\zeta)$ for some positive measure with compact support in $\C$.

The maximum of a finite collection of subharmonic functions is subharmonic, while the supremum of a collection (not necessarily finite) of subharmonic functions is subharmonic provided it is upper semicontinuous. 
In particular this implies that  if $A \colon \Omega \to \Mat (m, \C)$ is a matrix valued analytic function, then
$$u (z) := \log \norm{ A (z)} = \sup_{\norm{v}, \norm{w} \le 1} \, \log \abs{\avg{A (z) \, v, w}}$$
is subharmonic in $\Omega$. 

Note that the function $u (z)$ defined above is bounded from above on any compact subdomain. However, unless say $A \colon \Omega \to \GL (m, \C)$, the subharmonic function $u (z) = \log \norm{ A (z)}$ may be unbounded from below or it may assume the value $- \infty$.

A fundamental result in the theory of subharmonic functions is the Riesz representation theorem, which we formulate below.

\begin{theorem}\label{Riesz-rep-thm}
Let $u (z)$ be a subharmonic function in a domain $\Omega$, and assume that $ u (z) \not\equiv - \infty$. Then there is a unique Borel measure $\mu$ on $\Omega$ called the Riesz measure of $u$, such that for every compactly contained subdomain $\Omega_1$,
$$u (z) = \int_{\Omega_1} \log \sabs{z - \zeta} d \mu (\zeta) + h (z)\,,$$
where $h (z)$ is a harmonic function on $\Omega_1$.
\end{theorem}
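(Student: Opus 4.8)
The plan is to prove the Riesz representation theorem (Theorem~\ref{Riesz-rep-thm}) by reducing the statement to the classical fact that, for a $C^2$ function, subharmonicity is equivalent to having a nonnegative Laplacian in the distributional sense, and then using the Newtonian potential as a fundamental solution of the Laplacian in the plane. Concretely, fix a subdomain $\Omega_1 \Subset \Omega$ with smooth boundary and $\overline{\Omega_1} \subset \Omega$. Since $u \not\equiv -\infty$ and $u$ is subharmonic, it is locally integrable on $\Omega$ (this is a standard consequence of the sub-mean value inequality together with upper semicontinuity: the set where $u = -\infty$ has Lebesgue measure zero, and $u$ is bounded above on compacta). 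Hence $u$ defines a distribution on $\Omega$, and the distributional Laplacian $\mu := \frac{1}{2\pi}\,\Delta u$ is a nonnegative distribution; by the Riesz–Markov theorem a nonnegative distribution is a (positive) Borel measure, which is locally finite, so in particular $\mu(\Omega_1) < \infty$.

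Next I would introduce the candidate potential $p(z) := \int_{\Omega_1} \log|z - \zeta|\, d\mu(\zeta)$. The key computation is that $\frac{1}{2\pi}\log|z|$ is a fundamental solution of $\Delta$ on $\R^2 \cong \C$, i.e. $\Delta\bigl(\frac{1}{2\pi}\log|z-\zeta|\bigr) = \delta_\zeta$ in the sense of distributions; Fubini then gives $\frac{1}{2\pi}\Delta p = \mu$ as distributions on $\Omega_1$. One also checks that $p$ is itself subharmonic and locally integrable on $\Omega_1$ (the logarithmic kernel is locally integrable in the plane, uniformly in the pole, and $\mu$ is a finite measure on $\Omega_1$). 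Therefore $h := u - p$ is a distribution on $\Omega_1$ with $\Delta h = 0$ in the distributional sense; by Weyl's lemma (elliptic regularity for the Laplacian), $h$ agrees a.e. with a genuine harmonic, hence $C^\infty$, function on $\Omega_1$. After modifying $u$ on the null set where it fails to equal $p + h$ pointwise — or rather, observing that both $u$ and $p + h$ are subharmonic and equal a.e., hence equal everywhere, since a subharmonic function equals the essential upper limit / the limit of its averages at every point — we obtain the representation $u(z) = \int_{\Omega_1} \log|z-\zeta|\, d\mu(\zeta) + h(z)$ on $\Omega_1$.

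Finally I would address uniqueness of the Riesz measure. If $u = p_1 + h_1 = p_2 + h_2$ on $\Omega_1$ with $p_i$ the potentials of measures $\mu_i$ and $h_i$ harmonic, then applying $\frac{1}{2\pi}\Delta$ to both sides as distributions yields $\mu_1 = \mu_2$ as distributions, hence as measures, on every $\Omega_1$; since such subdomains exhaust $\Omega$, the measure $\mu$ on $\Omega$ is uniquely determined. (Strictly, one fixes $\mu$ on $\Omega$ to be $\frac{1}{2\pi}\Delta u$; the content of the theorem is the local representation formula with this $\mu$, and the uniqueness is with respect to that formula.)

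The main obstacle is the regularity/identification step: passing from "$u$ equals $p + h$ as distributions / almost everywhere" to "$u$ equals $p + h$ pointwise with $h$ genuinely harmonic." This requires (a) Weyl's lemma to upgrade the distributional harmonicity of $h$ to smoothness, and (b) the fact that a subharmonic function is determined pointwise by its $L^1_{\mathrm{loc}}$ equivalence class — precisely, $u(z) = \lim_{r\to 0^+} \fint_{|w-z|<r} u(w)\, dA(w)$ at every point, which pins down the pointwise values of $u$ from those of $p + h$. Everything else (local integrability of subharmonic functions, the fundamental solution computation, Fubini for the potential, positivity of $\Delta u$) is standard and can be cited from \cite{HK-book} or \cite{Levin}.
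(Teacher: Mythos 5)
The paper does not prove Theorem~\ref{Riesz-rep-thm}; it states it as a classical background fact and points the reader to the monographs \cite{HK-book} and \cite{Levin}. Your outline is correct and is essentially the standard textbook argument found in those references: regard $\mu := \frac{1}{2\pi}\Delta u$ (distributional Laplacian) as a positive Radon measure, subtract the logarithmic potential of its restriction to $\Omega_1$, apply Weyl's lemma to identify the remainder as a genuine harmonic function, and use the fact that a subharmonic function recovers its pointwise values from the limit of its disc averages to upgrade almost-everywhere equality to pointwise equality. Since the paper offers no proof of its own, there is nothing to compare beyond noting that your sketch agrees with the cited sources.
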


Let us assume for now that the subharmonic function $u(z)$ is bounded: 
$$\sabs{u(z)} \le C \quad \text{ for all } z\in\Omega\,.$$ 

Using Jensen's formula for subharmonic functions (see Section 7.2 in \cite{Levin}), for every $z \in \Omega$ and $r > 0$, if the disk  $D (z, r) \subset \Omega$, then we have
$$ \int_0^r \frac{\mu (D (z, t))}{t} d t = \int_0^1 u (z + r e (\theta)) d \theta  - u (z)\,.$$

Since clearly
$$\int_0^r \frac{\mu (D (z, t))}{t} d t \ge \int_{r/2}^r \frac{\mu (D (z, t))}{t} d t \more \mu (D (z, r/2))\,,$$
we conclude that 
\begin{equation}\label{measurement-sh-eq0}
\mu (D (z, r/2)) \less C\,.
\end{equation}

Therefore, we obtain the following measurement on the total Riesz mass of $u$:
$$\mu (\Omega_1) \less C (\Omega, \Omega_1) \, C\,,$$
where $C$ is the bound on $u (z)$ and $C (\Omega, \Omega_1)$ is a constant that depends on how the subdomain $\Omega_1$ is covered by disks contained in $\Omega$.

An argument that uses the Poisson-Jensen representation formula (see Section 3.7 in \cite{HK-book}) leads to a similar bound on the $L^\infty$-norm (on a slightly smaller compactly contained subdomain $\Omega_2$) for the harmonic part $h (z)$ in the Riesz representation of $u (z)$. 
We conclude that if $ \sabs{u (z)} \le C$ on $\Omega$, then
\begin{equation}\label{measurement-sh-bounded}
\mu (\Omega_1) +  \norm{h}_{L^{\infty} (\Omega_2)} \le C (\Omega, \Omega_1, \Omega_2) \, C\,.
\end{equation} 

We refer to the quantity in \eqref{measurement-sh-bounded} as a {\em uniform measurement} on the {\em bounded} subharmonic function $u (z)$, since it only depends on its bound and on its domain. It is precisely this measurement that determines the parameters in the base-LDT estimates for the observable $u(x)$.

This paper requires similar estimates for subharmonic functions that are {\em unbounded} from below. A uniform measurement like \eqref{measurement-sh-bounded} on the Riesz mass and on the harmonic part of such a subharmonic function were obtained by M. Goldstein and W. Schlag (see Lemma 2.2 in \cite{GS-fineIDS}). The derivation is based on the Poisson-Jensen formula and on considerations that involve Green's functions. The result in  \cite{GS-fineIDS} is formulated for functions $u \colon \Omega \to \R$. It holds, however, also for $ u \colon \Omega \to [- \infty, \infty)$, as long as $u \not\equiv - \infty$. That is because  $u \not\equiv - \infty$, along with some assumptions on the boundary of $\Omega$, are the only requirements for the applicability of the Poisson-Jensen formula (see Section 3.7 in \cite{HK-book}).

We formulate the aforementioned result in \cite{GS-fineIDS} for a subharmonic function on an annulus, as this is the context of our model.

\begin{lemma}\label{measurement-sh:lemma}
Let $u \colon \strip_r \to [-\infty, \infty)$ be a subharmonic function, and let
$$u (z) = \int_{\strip_{r/2}} \log \sabs{z - \zeta} d \mu (\zeta) + h (z)$$
be its Riesz representation on the smaller annulus $\strip_{r/2}$.
Assume that
\begin{equation}\label{measurement-sh:hyp1}
\sup_{z \in \strip_r} u (z) - \sup_{z \in \strip_{r/2}} u (z) \le C
\end{equation}

Then
\begin{equation}\label{measurement-sh:eq}
\mu (\strip_{r/2}) +  \norm{h}_{L^{\infty} (\strip_{r/4})} \le C_r \, C\,,
\end{equation}
where $C_r$ is a constant that depends on the width $r$ of the annulus.
\end{lemma}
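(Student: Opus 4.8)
The plan is to reduce the unbounded case to the machinery around the Riesz representation by working with a normalized subharmonic function and exploiting the sub-mean value property to control the Riesz mass via Jensen's formula, just as in the bounded setting, the point being that lower-unboundedness never enters these particular estimates. First I would set $m_r := \sup_{z \in \strip_r} u(z)$ and $m_{r/2} := \sup_{z \in \strip_{r/2}} u(z)$; both are finite since $u$ is subharmonic hence bounded above on compact subsets of $\strip_r$, and since $u \not\equiv -\infty$ on $\strip_{r/2}$ (this is needed for the Riesz representation there — if $u \equiv -\infty$ on all of $\strip_{r/2}$ the statement is vacuous, and subharmonicity plus $u\not\equiv-\infty$ somewhere in $\strip_{r/2}$ forces the Riesz measure to exist on $\strip_{r/2}$). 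Replacing $u$ by $v := u - m_r \le 0$ does not change the Riesz measure $\mu$ and only shifts $h$ by the constant $-m_r$, so I would prove the bound for $v$ with $\sup_{\strip_{r/2}} v = m_{r/2} - m_r =: -\alpha$ where by hypothesis \eqref{measurement-sh:hyp1} we have $0 \le \alpha \le C$ wait — the hypothesis gives $m_r - m_{r/2} \le C$, i.e. $\alpha = m_r - m_{r/2} \ge 0$ and $\alpha \le C$; so $v \le 0$ on $\strip_r$ and $\sup_{\strip_{r/2}} v = -\alpha \ge -C$.

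Next I would run the Jensen's-formula argument from the bounded case verbatim, but only using the one-sided information. Fix a disk $D(z,\rho) \subset \strip_{r/2}$; Jensen's formula for subharmonic functions gives
$$ \int_0^\rho \frac{\mu(D(z,t))}{t}\, dt = \int_0^1 v(z + \rho e(\theta))\, d\theta - v(z). $$
For the left side I restrict to $t \in [\rho/2, \rho]$ to get $\mu(D(z,\rho/2)) \lesssim \int_0^1 v(z+\rho e(\theta))\, d\theta - v(z) \le 0 - v(z) = -v(z) \le -\sup_{\strip_{r/2}} v + (\sup_{\strip_{r/2}}v - v(z))$. The first term is $\le C$ by hypothesis; the subtlety is that $-v(z)$ can be huge where $u$ is close to $-\infty$, so I cannot bound $\mu(D(z,\rho/2))$ pointwise this way. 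Instead I would bound the right side using $\int_0^1 v(z+\rho e(\theta))\,d\theta \le \sup_{\strip_{r/2}} v = -\alpha$ and, crucially, integrate in $z$ over a subdomain: averaging the Jensen identity in $z$ over a disk and using that $\int v \le 0$ converts the problematic $-v(z)$ term into $-\int v$, which is controlled by $|\log$-type integrability, but that is circular. The cleaner route, and the one I expect to use, is exactly the Green's-function/Poisson-Jensen approach of Goldstein--Schlag: pass to a slightly smaller annulus $\strip_{r/2}$, use the Poisson-Jensen representation of $v$ on $\strip_r$ (valid since $v \not\equiv -\infty$ and $\partial\strip_r$ is nice), which writes $-v(z)$ as an integral of the Green's function $G_{\strip_r}(z,\zeta)$ against $d\mu$ plus a Poisson integral of the boundary values of $v$; evaluating this at a point $z_* \in \strip_{r/2}$ where $v(z_*) = -\alpha \ge -C$ (such a point exists, or one arbitrarily close to it) and using that $G_{\strip_r}(z_*, \cdot) \gtrsim_r 1$ uniformly on $\overline{\strip_{r/2}}$ yields $\mu(\strip_{r/2}) \lesssim_r C$ directly. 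This is the key step and also the main obstacle: one must know $u \not\equiv -\infty$ to invoke Poisson--Jensen at all, and one must locate a point in $\strip_{r/2}$ where $v$ is not too negative — this is precisely where hypothesis \eqref{measurement-sh:hyp1} is used, since it guarantees $\sup_{\strip_{r/2}} v \ge -C$, so a near-maximizer does the job.

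Once $\mu(\strip_{r/2}) \le C_r C$ is in hand, the harmonic part is routine: on $\strip_{r/4}$ we have $h(z) = v(z) - \int_{\strip_{r/2}} \log|z - \zeta|\, d\mu(\zeta) + (\text{constant } m_r)$, but more usefully $h$ is the Poisson integral (on $\strip_{r/4}$, or on the relevant simply connected pieces) of the boundary values of $v$ on $\partial\strip_{r/4}$ minus the logarithmic potential, and one controls $\|h\|_{L^\infty(\strip_{r/4})}$ by combining (i) the bound on $\mu(\strip_{r/2})$, which controls $\sup_{\strip_{r/4}} \int \log|z-\zeta|\,d\mu(\zeta) \lesssim_r \mu(\strip_{r/2})$ from above and, via the local integrability of $\log$, also in $L^1$ hence the harmonic $h$ is bounded above on $\strip_{r/4}$; and (ii) since $v \le 0$ and $\log|z-\zeta| \le \log(\mathrm{diam})$ is bounded above on $\strip_{r/2}$, we get $h = v - \int\log|\cdot|\,d\mu \le -\int \log|\cdot|\,d\mu \lesssim_r \mu(\strip_{r/2})$ from above, and for the lower bound use the sub-mean value / Harnack-type control of the harmonic function $h$ on $\strip_{r/4} \Subset \strip_{r/2}$ by its (already bounded) average or maximum. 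I expect the write-up to mostly cite Lemma 2.2 of \cite{GS-fineIDS} and Section 3.7 of \cite{HK-book} for the Poisson--Jensen machinery, and the only genuinely new bookkeeping is the observation — already made in the paragraph preceding the lemma in the excerpt — that the hypothesis has been weakened to the one-sided form \eqref{measurement-sh:hyp1} and that this suffices because nothing in the argument ever needs a lower bound on $u$, only the existence of a not-too-negative value in $\strip_{r/2}$, which \eqref{measurement-sh:hyp1} supplies.
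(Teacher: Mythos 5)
Your proposal is correct and follows essentially the same route as the paper, which does not give a self-contained proof but instead cites Lemma~2.2 of \cite{GS-fineIDS} together with the observation (via the Poisson--Jensen formula, Section 3.7 of \cite{HK-book}) that the Goldstein--Schlag argument carries over once $u\not\equiv-\infty$. Your reconstruction of the key step — using the Poisson--Jensen/Green's-function representation on $\strip_r$, evaluated at a near-maximizer $z_*\in\strip_{r/2}$ supplied by hypothesis \eqref{measurement-sh:hyp1}, together with the uniform lower bound $G_{\strip_r}(z_*,\cdot)\gtrsim_r 1$ on $\overline{\strip_{r/2}}$ — is exactly the mechanism behind that cited lemma.
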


\begin{remark}\label{measurement-sh:remark1} \normalfont
Lemma~\ref{measurement-sh:lemma} above says that in order to obtain a uniform measurement like \eqref{measurement-sh-bounded} for a subharmonic function $u(z)$, it is enough to have an upper bound everywhere and a lower bound at {\em some}  point. Clearly assumption \eqref{measurement-sh:hyp1} is implied (up to doubling the constant) by
\begin{equation}\label{measurement-sh:hyp2}
\sup_{z \in \strip_r} u (z) + \norml{u}{\Lp{2}}  \le C\,.
\end{equation}
\end{remark}

\begin{remark}\label{measurement-sh:remark2} \normalfont We comment on the order of magnitude of the constant $C_r \, C$ in \eqref{measurement-sh:eq}, as $r \to 0$.

In the bounded case $\sabs{u (z} \le C$ for all $z \in \strip_r$, from \eqref{measurement-sh-eq0} and the fact that $\strip_{r/2}$ can be covered by $\mathscr{O} (\frac{1}{r})$ many disks of radius $\mathscr{O} (r)$, it follows that the total Riesz mass of $u$ is of order $\frac{1}{r} \, C$ or less. The $L^\infty$ bound on $h$ will be of the same order, showing that $C_r \, C \less \frac{1}{r} \, C$.  

In the unbounded case, under the assumption \eqref{measurement-sh:hyp1}, an inspection of the proof of Lemma 2.2 in \cite{GS-fineIDS} shows that the constant $C_r$ in \eqref{measurement-sh:eq} depends only on the annulus $\strip_r$, via certain estimates on its Green's function and an argument involving Harnack's inequality. These considerations lead to an estimate on $C_r$ which is  {\em exponential} in $\frac{1}{r}$ as $r \to 0$. One can show, via some calculations involving elliptic integrals, that this estimate on the order of $C_r$ cannot be significantly improved, unless of course \eqref{measurement-sh:hyp1} is strengthened.

We note, however, that throughout this paper, the width $r$ of the annulus $\strip_r$ is {\em fixed}. We do perform a change of coordinates of the multivariable torus $\T^d$, which in turn affects the size of the domain of the relevant  subharmonic functions. However, this change of coordinates is performed {\em only once}. Hence for all intents and purposes, the constant $C_r$ in this paper may be treated as a universal constant, and so the uniform measurement on $u(z)$ given by \eqref{measurement-sh:eq} depends only on the bound in \eqref{measurement-sh:hyp1} or in \eqref{measurement-sh:hyp2}.
\end{remark}

\medskip

We formulate the crucial estimates on a subharmonic function $u(z)$ which are needed in the proof of the LDT: a rate of decay of the Fourier coefficients of $u (x)$ and an estimate on its BMO norm derived under an appropriate splitting assumption. The reader may consult \cite{Muscalu-S} or \cite{Duoa} for background on the relevant harmonic analysis topics. 

Let $u \colon \strip_r \to [-\infty, \infty)$ be a subharmonic function, and let
$$u (z) = \int_{\strip_{r/2}} \log \sabs{z - \zeta} d \mu (\zeta) + h (z)$$
be its Riesz representation on the smaller annulus $\strip_{r/2}$.

Assume that
\begin{equation}\label{measurement-sh}
\mu (\strip_{r/2}) +  \norm{h}_{L^{\infty} (\strip_{r/4})}  +  \norml{\partial_x h}{\Lp{\infty}}  \le \umeas\,.
\end{equation}

\begin{lemma}\label{decay-FC-1var} 
Under the assumptions above, the following estimate on the Fourier coefficients of $u (x)$ as a function on $\T$ holds:
\begin{equation}\label{sl-decay-FC-1var}
\abs{ \hat{u} (k) } \less \umeas \, \frac{1}{\abs{k}} \quad \text{ for all } k \in \Z, \, k \neq 0.
\end{equation}
\end{lemma}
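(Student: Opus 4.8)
The plan is to use the Riesz representation together with the measurement assumption~\eqref{measurement-sh} to estimate the Fourier coefficients of the two pieces separately. Write $u(x) = \int_{\strip_{r/2}} \log\sabs{x - \zeta}\, d\mu(\zeta) + h(x)$ on the circle $\T = \{\sabs{z} = 1\}$. Since $h$ is harmonic on $\strip_{r/4}$ (in particular on a fixed annular neighborhood of $\T$), its boundary values on $\T$ extend harmonically to an annulus of fixed width; equivalently, its Fourier coefficients satisfy $\sabs{\hat h(k)} \less \norm{h}_{L^\infty(\strip_{r/4})}\, e^{-c\sabs{k}}$ for a constant $c = c(r) > 0$. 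This is the standard Cauchy-estimate/Laurent-expansion bound for functions harmonic on an annulus, and it is far stronger than the claimed $1/\sabs{k}$ decay, so the harmonic part contributes harmlessly, controlled by $\umeas$ via~\eqref{measurement-sh}. (The hypothesis $\norml{\partial_x h}{\Lp{\infty}} \le \umeas$ in~\eqref{measurement-sh} also gives $\sabs{k}\,\sabs{\hat h(k)} = \sabs{\widehat{\partial_x h}(k)} \less \umeas$ directly, which alone already suffices for the harmonic part.)

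For the potential-theoretic part, set $p(x) := \int_{\strip_{r/2}} \log\sabs{x - \zeta}\, d\mu(\zeta)$, where $x$ ranges over $\T$. I would compute $\hat p(k)$ by Fubini:
$$
\hat p(k) = \int_{\strip_{r/2}} \left( \int_0^1 \log\sabs{e(x) - \zeta}\, e(-kx)\, dx \right) d\mu(\zeta).
$$
The inner integral is the $k$-th Fourier coefficient of the function $x \mapsto \log\sabs{e(x) - \zeta}$, which is explicitly computable: for $\sabs{\zeta} < 1$ it equals $-\frac{1}{2\sabs{k}}\,\zeta^{\sabs{k}}$ (and $0$ for $k=0$ up to the $\log$ term), while for $\sabs{\zeta} > 1$ it equals $-\frac{1}{2\sabs{k}}\,\zeta^{-\sabs{k}}$; this is the classical expansion of the logarithmic kernel. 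In either case the modulus of the inner integral is at most $\frac{1}{2\sabs{k}}\,\max(\sabs{\zeta}^{\sabs{k}}, \sabs{\zeta}^{-\sabs{k}}) \le \frac{1}{2\sabs{k}} (1+r)^{\sabs{k}}$ — but this crude bound is too lossy. The point is that since $\mu$ is supported in the \emph{narrower} annulus $\strip_{r/2}$, we have $1 - r/2 < \sabs{\zeta} < 1 + r/2$, so the inner integral is in fact bounded by $\frac{1}{2\sabs{k}}(1 + r/2)^{\sabs{k}}$ when $\sabs{\zeta}>1$ and by $\frac{1}{2\sabs{k}}(1-r/2)^{-\sabs{k}}$... which still grows. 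The clean way out, and what I would actually do, is to only use the trivial bound $\sabs{\zeta^{\pm\sabs{k}}} \le C_r$ for a fixed constant depending on $r$ — no, that also fails.

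Let me restructure: the correct observation is that $\sabs{e(x) - \zeta}$ for $\sabs{\zeta}$ close to $1$ and $e(x) \in \T$ is uniformly bounded \emph{above} by $2 + r$ and the Fourier coefficients of $\log\sabs{e(x)-\zeta}$ in $x$ decay like $1/\sabs{k}$ \emph{uniformly} for $\zeta$ in any fixed compact subset of $\C \setminus \T$, with an implied constant depending only on $\dist(\zeta, \T) \ge$ something — but $\mu$ can put mass arbitrarily close to $\T$. So I must use the \emph{total mass} bound and the fact that even for $\zeta$ on $\T$ itself, $\log\sabs{e(x) - \zeta}$ has Fourier coefficients bounded by $1/\sabs{k}$ (since $\int_0^1 \log\sabs{e(x) - e(y)} e(-kx)\, dx = -\frac{1}{2\sabs{k}} e(-ky)$ exactly). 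Thus for \emph{every} $\zeta$ with $\sabs{\zeta} \le 1$ one gets $\sabs{\int_0^1 \log\sabs{e(x)-\zeta} e(-kx) dx} = \frac{\sabs{\zeta}^{\sabs{k}}}{2\sabs{k}} \le \frac{1}{2\sabs{k}}$, and for $\sabs{\zeta} > 1$, writing $\zeta = R\eta$ with $R = \sabs{\zeta} \le 1 + r/2$, one gets $\frac{R^{-\sabs{k}}}{2\sabs{k}} \le \frac{1}{2\sabs{k}}$ as well. Hence $\sabs{\hat p(k)} \le \frac{1}{2\sabs{k}} \mu(\strip_{r/2}) \le \frac{\umeas}{2\sabs{k}}$. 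Combining the two parts yields~\eqref{sl-decay-FC-1var}.

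The main obstacle is purely bookkeeping: getting the exact Fourier coefficients of the logarithmic kernel $x \mapsto \log\sabs{e(x) - \zeta}$ right for $\zeta$ on either side of the unit circle, and noting that the relevant exponents $\sabs{\zeta}^{\pm\sabs{k}}$ are $\le 1$ in the regime that matters, so that the decay is uniform in $\zeta \in \strip_{r/2}$ and the mass bound from~\eqref{measurement-sh} closes the estimate. No deep analysis is needed beyond the classical expansion of the logarithmic potential and the Laurent/Cauchy estimate for the harmonic part.
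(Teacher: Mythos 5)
Your proof is the standard argument from Bourgain's monograph (Lemma~4.1 in the work cited in the paper as \cite{B}), which is exactly what the paper intends here: split $u$ via the Riesz representation, bound the Fourier coefficients of the logarithmic kernel $x\mapsto\log\sabs{e(x)-\zeta}$ by $\sabs{\zeta}^{\pm\sabs{k}}/(2\sabs{k})\le 1/(2\sabs{k})$ uniformly for $\zeta$ in the annulus, integrate against $d\mu$ to use the Riesz-mass bound, and handle the harmonic part via $\sabs{k}\,\sabs{\hat h(k)}\less\norml{\partial_x h}{\Lp{\infty}}\le\umeas$. The mid-stream detour with $(1\pm r/2)^{\pm\sabs{k}}$ was a momentary mix-up of which expansion (in $\zeta e(-x)$ for $\sabs{\zeta}<1$ versus in $e(x)/\zeta$ for $\sabs{\zeta}>1$) applies on each side of $\T$, which you correctly resolve, and the exact coefficient formulas you quote have harmless conjugation slips that do not affect the moduli, so the argument stands.
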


\begin{lemma}\label{decay-BMO-1var} Let $u (z)$ be a subharmonic function satisfying \eqref{measurement-sh}.
Assume moreover that  there is a splitting $u = u_0 + u_1$  with $\norml{u_0}{\Lp{\infty}} < \ep_0$ and $\norml{u_1}{\Lp{1}} < \ep_1$. Then $u (x)$ has the following BMO bound:
 \begin{equation}\label{sl-BMO-1var}
\norml{u}{BMO (\T)} \less \ep_0 + ( \umeas \, \ep_1 )^{1/2}\,. 
\end{equation}
\end{lemma}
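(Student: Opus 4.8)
The plan is to combine the Fourier coefficient decay from Lemma~\ref{decay-FC-1var} with the Riesz decomposition of $u$ and the John-Nirenberg machinery, exploiting the splitting hypothesis $u = u_0 + u_1$ to control the logarithmic-potential part of $u$. First I would write $u = P_\mu + h$ where $P_\mu(z) := \int_{\strip_{r/2}} \log\abs{z-\zeta}\,d\mu(\zeta)$ and $h$ is harmonic on $\strip_{r/4}$. The harmonic part contributes a trivially good BMO bound: since $\norm{h}_{L^\infty(\strip_{r/4})}$ and $\norm{\partial_x h}{\Lp{\infty}}$ are both $\less \umeas$, the restriction $h(x)$ is Lipschitz on $\T$ with constant $\less\umeas$, hence $\norm{h}{BMO(\T)} \less \umeas$. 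Actually, one needs to be slightly more careful: the BMO bound on $u$ should come out as $\ep_0 + (\umeas\,\ep_1)^{1/2}$, which is small when $\ep_0,\ep_1$ are small, so the contribution of $h$ must be absorbed differently — I expect the $h$ term is handled by noting $\umeas\,\ep_1 \gtrsim$ (something) via the structure, or more likely the argument bounds the \emph{potential} part directly in terms of the splitting and treats $h$ as part of $u_0$-like smooth data. Let me restructure.

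The core estimate is for the logarithmic potential. For a single point mass $\delta_\zeta$ with $\zeta \in \strip_{r/2}$, the function $x \mapsto \log\abs{x-\zeta}$ on $\T$ has BMO norm bounded by an absolute constant uniformly in $\zeta$ (this is classical: $\log$ of distance to a point is the prototypical BMO function, and the bound is uniform as long as $\zeta$ stays a fixed distance $\gtrsim r$ away from $\T$, which it does since $\zeta \in \strip_{r/2}$). Integrating against $\mu$ and using $\mu(\strip_{r/2}) \le \umeas$ gives $\norm{P_\mu}{BMO(\T)} \less \umeas$ — but this is the \emph{wrong} bound, it's not small. So the splitting must be used to do better. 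The right approach: the splitting $u = u_0 + u_1$ with $\norm{u_0}{\infty} < \ep_0$ and $\norm{u_1}{L^1} < \ep_1$ is exactly the input needed for a BMO-interpolation-type estimate. I would argue: on any arc $I \subset \T$ of length $\abs{I}$, decompose the oscillation of $u$ over $I$. The $u_0$ part contributes $\less \ep_0$. For $u_1$, its mean oscillation over $I$ is controlled by $\frac{1}{\abs{I}}\int_I \abs{u_1} \le \frac{\ep_1}{\abs{I}}$, which is good for large arcs but bad for small arcs. For small arcs one uses instead the Fourier-side / potential-side regularity: from Lemma~\ref{decay-FC-1var}, $\abs{\hat u(k)} \less \umeas/\abs{k}$, which controls the oscillation of $u$ over an arc of length $\abs{I}$ by $\less \umeas \log(1/\abs{I})$ — still logarithmically divergent, so this alone fails too. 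The key is to \emph{balance}: optimize over the threshold scale $\lambda$ separating "large arcs" (use $\ep_1/\abs{I}$) from "small arcs" (use the $\umeas$-controlled smoothness). Choosing the crossover at $\abs{I} \sim (\ep_1/\umeas)^{1/2}$ yields the bound $\ep_0 + (\umeas\,\ep_1)^{1/2}$.

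Concretely, the steps I would carry out: (1) Reduce to bounding $\sup_I \frac{1}{\abs{I}}\int_I \abs{u - \avg{u}_I}$ over arcs $I$. (2) Split $u = u_0 + u_1$; the $u_0$ contribution is immediately $\less \ep_0$. (3) For $u_1$, distinguish arcs with $\abs{I} \ge \lambda$ and $\abs{I} < \lambda$ for a parameter $\lambda$ to be chosen. For $\abs{I}\ge\lambda$: $\frac{1}{\abs{I}}\int_I\abs{u_1 - \avg{u_1}_I} \le \frac{2}{\abs{I}}\norm{u_1}{L^1(\T)} \le \frac{2\ep_1}{\lambda}$. (4) For $\abs{I} < \lambda$: here I cannot use the $L^1$ smallness and must instead control oscillation of $u_1 = u - u_0$ using regularity of $u$. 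Bound the oscillation of $u$ on short arcs using the Riesz representation: $u = P_\mu + h$, with $h$ Lipschitz ($\less\umeas$) so its oscillation on $I$ is $\less\umeas\abs{I} \le \umeas\lambda$; and for $P_\mu$, the mean oscillation of $\log\abs{\cdot-\zeta}$ on an arc of length $\abs{I}$ is an absolute constant (BMO), and these are small on average — more precisely I'd show $\frac{1}{\abs{I}}\int_I\abs{P_\mu - \avg{P_\mu}_I} \less \umeas$ uniformly (this is the hard, but standard, BMO bound for the logarithmic potential, using $\mu(\strip_{r/2})\le\umeas$ and that $\mathrm{supp}\,\mu$ is at distance $\gtrsim r$ from $\T$). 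Combined with $u_0$ contributing $\ep_0$, the short-arc oscillation of $u_1$ is $\less \ep_0 + \umeas$. Hmm — that gives $\ep_0 + \umeas$, not matching.

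So I need to be sharper on short arcs. The actual mechanism must be: on short arcs use that $u$ itself (not just $u_1$) is controlled, because $\norm{u_0}{\infty}<\ep_0$ means $u$ and $u_1$ differ by at most $\ep_0$, and one directly estimates $\mathrm{osc}_I(u)$ via the BMO norm of $u$ being finite — but that's circular. The genuine argument, I believe, runs through the \emph{John–Nirenberg / good-$\lambda$} structure: one knows a priori (from $\mu(\strip_{r/2}), \norm{h}{L^\infty} \le \umeas$) that $\norm{u}{BMO} \le K\umeas$ for some constant $K$; call this crude bound $B_0 = K\umeas$. Then the refined estimate is obtained by the interpolation inequality $\norm{u}{BMO} \less \ep_0 + \norm{u_1}{BMO}$ and $\norm{u_1}{BMO} \less (\norm{u_1}{L^1} \cdot \norm{u_1}{BMO}^{\text{a priori}})^{1/2} \le (\ep_1 \cdot B_0)^{1/2} \less (\ep_1\umeas)^{1/2}$; the last step is a standard "BMO is close to $L^1$ with a square-root loss against the a priori BMO bound" lemma, provable by the good-$\lambda$ / distribution-function argument that splits arcs exactly at the scale $\lambda = (\ep_1/\umeas)^{1/2}$. \textbf{The main obstacle} I anticipate is precisely establishing this square-root interpolation $\norm{u_1}{BMO} \less (\norm{u_1}{L^1}\,\norm{u_1}{BMO,\,\text{a priori}})^{1/2}$ cleanly — it is the only non-formal ingredient, and it requires knowing the crude uniform BMO bound $\norm{u}{BMO}\less\umeas$ first (which itself needs the Fourier decay Lemma~\ref{decay-FC-1var} plus the classical fact that subharmonic functions with controlled Riesz data have bounded mean oscillation), and then running the good-$\lambda$ argument at the optimal crossover scale. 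Everything else is bookkeeping: triangle inequality for BMO, the trivial Lipschitz/$L^\infty$ bounds on $h$ and $u_0$, and the classical uniform-BMO estimate for the logarithmic potential of a finite measure supported away from $\T$.
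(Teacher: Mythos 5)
Your overall scaffolding — a crude BMO bound $\norm{u}_{BMO} \less \umeas$ coming from the Riesz data, splitting arcs at the crossover scale $\rho_0 \sim (\ep_1/\umeas)^{1/2}$, using $L^1$-smallness of $u_1$ on long arcs — is the right spirit, and is in fact how Bourgain's argument (to which the paper merely delegates the proof, citing Chapter 4 of \cite{B} and Section 1 of \cite{B-d}) proceeds. However, the step you yourself flag as "the only non-formal ingredient," namely the claimed interpolation
\[
\norm{u_1}_{BMO} \less \bigl(\norm{u_1}_{L^1} \cdot B_0\bigr)^{1/2},
\]
where $B_0$ is an a priori BMO bound, is \emph{not} a valid inequality for general functions, and it also fails in the specific situation at hand. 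A counterexample: take $v(x) = \umeas \log\abs{x}\,\cdot\ind_{\{\abs{x}<\delta\}}$ on $\T$; then $\norm{v}_{L^1} \sim \umeas\,\delta\log(1/\delta)$ can be made arbitrarily small while $\norm{v}_{BMO} \sim \umeas$ stays fixed, so no inequality of the form $\norm{v}_{BMO} \less (\norm{v}_{L^1}\cdot\umeas)^{1/2}$ can hold. The same applies when $v = u - u_0$ with $u$ a log-potential with mass on $\T$: the truncated logarithm has small $L^1$ norm but BMO norm comparable to the Riesz mass. So the reduction "bound $\norm{u_1}_{BMO}$ by $(\ep_1\umeas)^{1/2}$" is not a correct intermediate target.

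The underlying issue is that once you peel off $u_0$ and try to prove a BMO bound on $u_1$ alone, you lose the subharmonicity: $u_1 = u - u_0$ is not subharmonic, and small $L^1$ norm together with an a priori BMO bound gives nothing. The reason Lemma~\ref{decay-BMO-1var} is nevertheless true is that when the BMO norm of $u$ is genuinely of order $\umeas$, the splitting hypothesis then \emph{forces} $\ep_0$ to be large (in the truncated-log example above, $\norm{u_0}_{\Lp\infty}$ must be at least $\umeas\,\abs{\log\delta}$), so the conclusion $\ep_0 + (\umeas\ep_1)^{1/2}$ is rescued by the $\ep_0$ term — but one only sees this if the short-arc estimate is run on $u$ itself, keeping the subharmonic structure and the $\ep_0$-boundedness of $u_0$ together in play. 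Concretely, the short-arc regime must use the sub-mean-value inequality (or equivalently the sign structure $u(x) - \tfrac{1}{\pi\rho^2}\int_{D(x,\rho)} u \le 0$ coming from the Riesz representation) to bound $\sup_I u$ in terms of a local average of $u$, then locate a good reference point $x_0 \in I$ via Chebyshev on $\abs{u_1}$ to get $u(x_0) \ge -\ep_0 - 2\ep_1/\rho$, and balance. This is precisely the part where Bourgain's argument uses the explicit Riesz representation and the bound $\norml{\partial_x h}{\Lp{\infty}} \le \umeas$, as the paper's discussion following Lemma~\ref{decay-BMO-1var} emphasizes; it cannot be replaced by a black-box $L^1$-versus-BMO interpolation.
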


\smallskip

We make some comments regarding the proofs of these lemmas. 

These types of results are available for {\em bounded} subharmonic functions, see Chapter 4 in J. Bourgain's monograph \cite{B} or Section 1 in \cite{B-d}.  

A careful inspection of their proofs shows that boundedness of the subharmonic function $u(z)$ is not strictly necessary: it is only used to derive estimates on the Riesz mass $\mu (\strip_{r/2})$ and on the $L^\infty$-norm of the harmonic part $h$ in the Riesz representation of $u$, that is, to derive the uniform measurement \eqref{measurement-sh-bounded} on the annulus. 

Moreover, the resulting constants appearing in the estimates on the Fourier coefficients and on the BMO norm of $u(x)$ depend precisely on this uniform measurement on $u$ and on the bound on the derivative of $h$, that is, on $\norml{\partial_x h}{\Lp{\infty}}$. 

Therefore, the bound in \eqref{measurement-sh} can be substituted for the boundedness of $u(z)$ and Lemmas~\ref{decay-FC-1var} and \ref{decay-BMO-1var} are proven along the same lines as their counterparts in \cite{B, B-d}.

\begin{remark}\label{decays-sh:remark} \normalfont
Let $u \colon \strip_r \to [-\infty, \infty)$ be a subharmonic function and assume that \eqref{measurement-sh:hyp2} holds, that is
\begin{equation}\label{measurement-sh:hyp}
\sup_{z \in \strip_r} u (z) + \norml{u}{\Lp{2}}  \le C\,.
\end{equation}

Then from Lemma~\ref{measurement-sh:lemma} we have
$$\mu (\strip_{r/2}) +  \norm{h}_{L^{\infty} (\strip_{r/4})} \le C_r \, C\,.$$

Using the Poisson integral representation for harmonic functions and scaling, it is easy to see that since $ \norm{h}_{L^{\infty} (\strip_{r/4})} \le C_r \, C$, then 
$ \norml{\partial_x h}{\Lp{\infty}} \less \frac{C_r }{r} \, C$.

We conclude that 
$$\mu (\strip_{r/2}) +  \norm{h}_{L^{\infty} (\strip_{r/4})}  +  \norml{\partial_x h}{\Lp{\infty}}  \less \frac{C_r }{r} \, C\,,$$
or in other words, the assumption \eqref{measurement-sh} above holds with $\umeas = \mathscr{O} \bigl( \frac{C_r }{r} \, C \bigr)$.

We also note that since the annulus $\strip_r$ will be fixed throughout the paper, the uniform measurement in \eqref{measurement-sh} will depend only on the bound $C$ in \eqref{measurement-sh:hyp}. That is, we may write
$$\mu (\strip_{r/2}) +  \norm{h}_{L^{\infty} (\strip_{r/4})}  +  \norml{\partial_x h}{\Lp{\infty}}  \less C\,.$$
\end{remark}

The following result is an immediate consequence of Lemma~\ref{decay-BMO-1var} 
and it shows that a a weak a-priori concentration inequality for a subharmonic function can always be {\em boosted}  to a stronger estimate. We use the notation $\avg{u}$ to denote the space average $\int_\T u(x) d x$ of the function $u (x)$ on $\T$.

\begin{lemma}\label{splitting-lemma-1var}
Let $u (z)$ be a subharmonic function such that the bound  \eqref{measurement-sh} holds.
If $u (x)$ satisfies the weak a-priori estimate:
\begin{equation}\label{sl-1var-weak}
\abs{ \{ x \in \T \colon \abs{ u (x) - \avg{u} }  > \ep_0 \} } < \ep_1
\end{equation} 
with $\ep_1 \le \ep_0^4$, then for some absolute constant $c > 0$,
\begin{equation}\label{sl-1var-strong}
\abs{ \{ x \in \T \colon \abs{ u (x) - \avg{u} }  > \ep_0^{1/2} \} } < e^{- c \bigl[\ep_0^{1/2} + \umeas \, \ep_1^{1/4} \, \ep_0^{-1/2}  \bigr] ^{-1}} < e^{- c \, \umeas^{-1} \, \ep_0^{-1/2}}.
\end{equation} 
\end{lemma}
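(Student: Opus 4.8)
The statement of Lemma~\ref{splitting-lemma-1var} is a standard "bootstrapping" argument whose engine is the BMO estimate from Lemma~\ref{decay-BMO-1var} together with the John--Nirenberg inequality. The idea is: a weak concentration inequality \eqref{sl-1var-weak} gives us a splitting $u = u_0 + u_1$ of the precise type required by Lemma~\ref{decay-BMO-1var}, which then upgrades to a good BMO bound on $u(x) - \avg{u}$; feeding that BMO bound into John--Nirenberg produces the exponential tail estimate \eqref{sl-1var-strong}. So the proof is essentially "unpack the hypothesis into a splitting, quote Lemma~\ref{decay-BMO-1var}, quote John--Nirenberg."

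Here is how I would carry it out. First, replace $u$ by $v := u - \avg{u}$, which is still subharmonic and still satisfies the measurement \eqref{measurement-sh} with the same $\umeas$ (subtracting a constant only changes the harmonic part by a constant, leaving $\mu$, $\norm{h}_{L^\infty}$ and $\norm{\partial_x h}{\Lp{\infty}}$ unchanged). Its space average is $\avg{v} = 0$. Now use \eqref{sl-1var-weak}: let $G := \{x \in \T : \abs{v(x)} \le \ep_0\}$, so $\abs{\T \setminus G} < \ep_1$. Define the splitting $v = v_0 + v_1$ by $v_0 := v \cdot \ind_G$ and $v_1 := v \cdot \ind_{\T \setminus G}$. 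Then $\norml{v_0}{\Lp{\infty}} \le \ep_0$ immediately. For $v_1$ I need an $L^1$ bound: $\norml{v_1}{\Lp{1}} = \int_{\T \setminus G} \abs{v(x)}\, dx$. Here is the one genuinely substantive point — one must control $\int \abs{v}$ over a set of small measure, knowing only that $v = \log\norm{\cdot}$-type, i.e.\ that $v$ satisfies the measurement \eqref{measurement-sh}. The cleanest route is Cauchy--Schwarz plus an $L^2$ bound: $\int_{\T \setminus G} \abs{v} \le \ep_1^{1/2} \norml{v}{\Lp{2}}$, and $\norml{v}{\Lp{2}} \less \umeas$ follows from the Fourier-coefficient decay in Lemma~\ref{decay-FC-1var} (Parseval: $\norml{v}{\Lp{2}}^2 = \sum_{k\neq0} \abs{\hat u(k)}^2 \less \umeas^2 \sum_{k\neq 0} k^{-2} \less \umeas^2$). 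Hence $\norml{v_1}{\Lp{1}} \less \umeas\, \ep_1^{1/2} =: \ep_1'$.

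Next, apply Lemma~\ref{decay-BMO-1var} to $v$ with this splitting (valid since $v$ obeys \eqref{measurement-sh}): it yields $\norml{v}{BMO(\T)} \less \ep_0 + (\umeas\, \ep_1')^{1/2} \less \ep_0 + \umeas\, \ep_1^{1/4}$. Writing $\Gamma$ for this BMO bound, John--Nirenberg gives absolute constants $c_1, c_2 > 0$ with $\abs{\{x : \abs{v(x)} > \lambda\}} \le c_1 e^{-c_2 \lambda / \Gamma}$ for all $\lambda > 0$ (here I use $\avg{v} = 0$, so the mean in John--Nirenberg is zero). Take $\lambda = \ep_0^{1/2}$. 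Then the exponent is $-c_2\, \ep_0^{1/2} / (\ep_0 + \umeas\, \ep_1^{1/4})$. Using $\ep_1 \le \ep_0^4$ we get $\umeas\, \ep_1^{1/4} \le \umeas\, \ep_0$, but more usefully $\ep_0 + \umeas\,\ep_1^{1/4} \less \umeas(\ep_0^{1/2} \cdot \ep_0^{1/2} + \ep_1^{1/4})$; a short manipulation (noting $\ep_0 = \ep_0^{1/2}\cdot\ep_0^{1/2}$ and that one may assume $\ep_0$ small, $\umeas \ge 1$) bounds $\ep_0^{1/2}/\Gamma$ below by $c\,[\ep_0^{1/2} + \umeas\, \ep_1^{1/4}\, \ep_0^{-1/2}]^{-1}$, which is the first bound in \eqref{sl-1var-strong}; then dropping the $\ep_0^{1/2}$ term in the bracket and again using $\ep_1 \le \ep_0^4$ so that $\umeas\,\ep_1^{1/4}\ep_0^{-1/2} \le \umeas\, \ep_0^{1/2}$ gives the second, cleaner bound $e^{-c\,\umeas^{-1}\ep_0^{-1/2}}$. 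Absorbing the harmless constant $c_1$ into a slight adjustment of $c$ (legitimate for $\ep_0$ small) finishes the proof.

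**Main obstacle.** Nothing here is deep; the one place that needs care is the passage from "$\abs{\T\setminus G}$ is small" to "$\int_{\T\setminus G}\abs{v}$ is small", i.e.\ ruling out the possibility that $v$ is enormously negative on that small bad set (recall $u$ can be $-\infty$ at singularities). The resolution is the a priori $L^2$ bound $\norml{v}{\Lp2} \less \umeas$ coming out of the Fourier decay Lemma~\ref{decay-FC-1var}, combined with Cauchy--Schwarz — this is exactly why \eqref{measurement-sh} is the right hypothesis and why the measurement $\umeas$, rather than any pointwise control, appears in the conclusion. The only other fussy bit is the arithmetic reconciling the John--Nirenberg exponent with the two displayed forms of the bound in \eqref{sl-1var-strong}, which is routine given $\ep_1 \le \ep_0^4$.
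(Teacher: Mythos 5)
Your proposal is correct and follows exactly the paper's own argument: normalize to $\avg{u}=0$, split $u$ according to the bad set of measure $<\ep_1$, bound $\norml{u_1}{\Lp{1}}$ by Cauchy--Schwarz and the $L^2$-bound deduced from the Fourier decay in Lemma~\ref{decay-FC-1var}, feed the splitting into the BMO estimate of Lemma~\ref{decay-BMO-1var}, and finish with John--Nirenberg. The only difference is that you spell out the closing arithmetic in more detail than the paper does.
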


\begin{proof}
We may of course assume that $\avg{u} = 0$. 
Denote the set in \eqref{sl-1var-weak} by $\B$, so $\abs{\B} < \ep_1$.
Then $u = u_0 + u_1$ for $u_0 := u \cdot \ind_{\B^{\comp}}$ and $u_1 := u \cdot \ind_\B$.

Clearly $\norml{u_0}{\Lp{\infty}} \le \ep_0$.
Since \eqref{measurement-sh} holds, we may apply Lemma~\ref{decay-FC-1var} and \eqref{sl-decay-FC-1var} implies $\norml{u}{\Lp{2}} \less \umeas$, so
$\norml{u_1}{\Lp{1}} \le \norml{u}{\Lp{2}} \cdot \norml{\ind_\B}{\Lp{2}} \le \umeas \, \ep_1^{1/2}$.

\smallskip

Lemma~\ref{decay-BMO-1var} applies, and we have the BMO bound:
$$\norml{u}{BMO(\T)} \le \ep_0 +  \umeas \, \ep_1^{1/4}.$$

The conclusion then follows directly from John-Nirenberg inequality.
\end{proof}

\subsection{Base LDT estimates for pluri-subharmonic observables}
\label{uqBET}
The Birkhoff ergodic theorem implies that if the translation by $\om$ is ergodic, then for any observable $\xi \in L^1 (\T^d)$, as $n \to \infty$ the Birkhoff average 
$$\frac{1}{n} \sum_{j=0}^{n-1} \xi (x + j \om) \to \avg{\xi} \quad \text{ for a.e. } x \in \T^d.$$ 
Moreover, if $\xi$ is continuous, then the convergence above is uniform in the phase $x$.

In order to establish {\em fiber}-LDT estimates, we need a {\em quantitative} version of this convergence, one which applies to observables that admit a pluri-subharmonic extension. Moreover, the parameters describing this quantitative convergence should depend explicitly on a certain {\em uniform measurement} of such observables.

We formulate and prove this quantitative version of the Birkhoff ergodic theorem for pluri-subharmonic functions that are {\em unbounded} from below but otherwise satisfy some bounds on average. 

A similar result for {\em bounded} pluri-subharmonic functions was formulated and proven in \cite{sK2} (see also \cite{B, B-d} for results in the same spirit).  

\smallskip

We begin with some general considerations on pluri-subharmonic functions.

\begin{definition}
Let $\Omega$ be a domain in  $\C^d$. A function $u \colon \Omega \to [ - \infty, \infty)$ is called pluri-subharmonic if it is upper semicontinuous and its restriction to any complex line is subharmonic. 
\end{definition}

It follows from the definition above that the composition of a pluri-subharmonic function with a linear function is pluri-subharmonic as well. Moreover, a pluri-subharmonic function is subharmonic in each variable.

If $f (z)$ is holomorphic (i.e. analytic in each variable), then $u (z) = \log \sabs{f (z)}$ is pluri-subharmonic. Moreover, if $A(z)$ is a holomorphic matrix-valued function, then $u (z) = \log \norm{A(z)}$ is pluri-subharmonic. 

\smallskip

We note two important differences between subharmonic and pluri-subharmonic functions.

Firstly, the zeros set of an analytic function of {\em one} variable is discrete, which is of course not the case for several variables analytic functions. Correspondingly, if the function $u (z)$ is pluri-subharmonic, then the set $\{ z \colon u (z) = - \infty\}$ can be quite complex, i.e. a variety of co-dimension $1$ in $\C^d$. In particular, this set may contain hyperplanes or lines parallel to the Euclidian coordinate axes. 

Secondly, the Riesz representation theorem~\ref{Riesz-rep-thm}, which is an important tool in the study of subharmonic functions, is not available for pluri-subharmonic functions.

\smallskip

Crucial to proving a quantitative Birkhoff ergodic theorem for observables on $\T$ with subharmonic extension to $\strip_r$, is having the decay~\ref{sl-decay-FC-1var} on its Fourier coefficients from Lemma~\ref{decay-FC-1var} and the boosting of a concentration inequality in Lemma~\ref{splitting-lemma-1var}.

Similar results for a pluri-subharmonic function $u (z)$ on $\strip_r^d$ can be obtained through a slicing argument, provided we may apply Lemma~\ref{decay-FC-1var} and Lemma~\ref{splitting-lemma-1var} in {\em each} variable, and with the same measurements. 

More precisely, this type of argument works if the measurement  \eqref{measurement-sh} applies {\em uniformly} for all subharmonic functions 
$$\strip_r \ni z_i \to u (z_1, \ldots, z_{i-1}, z_i, z_{i+1}, \ldots, z_d)\,,$$ where $1 \le i \le d$ and $z_1, \ldots, z_{i-1}, z_{i+1}, \ldots, z_d \in \strip_r$. 

Of course this would be automatic if the pluri-subharmonic function $u (z)$ were bounded. A weaker assumption is for \eqref{measurement-sh:hyp2} to hold uniformly in each variable. In other words, we require that $u(z)$ be bounded from above on $\strip_r^d$ and that its $L^2$-norm in {\em each variable} be bounded as well. The latter assumption is equivalent to having a bound on $\normtwo{u}$ (see Subsection~\ref{unif-l2bounds} for the meaning of this norm).  

To summarize, the assumption we make in this section on a pluri-subharmonic function $u \colon \strip_r^d \to [-\infty, \infty)$ is that for some constant $C < \infty$ we have
\begin{equation}\label{psh-hyp}
\sup_{z \in \strip_r^d} u (z) + \normtwo{u} \le C\,.
\end{equation}

We now state the analogue of the boosting Lemma~\ref{splitting-lemma-1var} for several variables. For simplicity we consider $d=2$ variables, but a similar result, proven the same way, holds for any number $d$ of variables. The meaning of the constant $C_r$ below was given in Subsection~\ref{estimates_sh} (see Remark~\ref{measurement-sh:remark2}). We remind the reader that $C_r$ depends only on the annulus $\strip_r$, hence only on $r$. 

\begin{lemma}\label{splitting-lemma-2var}
Let $u (z)$ be a pluri-subharmonic function on $\strip_r^2$ such that the bound \eqref{psh-hyp} holds for some constant $C < \infty$ and let $\umeas := \frac{C_r}{r} \, C$.

If $u (x)$ satisfies the weak a-priori estimate:
\begin{equation}\label{sl-2var-weak}
\abs{ \{ x \in \T^2 \colon \abs{ u (x) - \avg{u} }  > \ep_0 \} } < \ep_1
\end{equation} 
with $\ep_1 \le \ep_0^8$, then for some absolute constant $c > 0$,
\begin{equation}\label{sl-2var-strong}
\abs{ \{ x \in \T^2  \colon \abs{ u (x) - \avg{u} }  > \ep_0^{1/4} \} } < ^{- c \bigl[\ep_0^{1/4} + \umeas \, \ep_1^{1/8} \, \ep_0^{-1/2}  \bigr] ^{-1}} < e^{- c \, \umeas^{-1} \, \ep_0^{-1/4}}.
\end{equation} 

For $d$ variables, replace the powers $1/4$ by $1/2 d$ etc. 
\end{lemma}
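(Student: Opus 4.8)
The plan is to reduce the statement to the one-variable boosting Lemma~\ref{splitting-lemma-1var}, by slicing $u$ in each of the two coordinate directions in turn.

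First I would normalize $\avg{u}=0$: since $\abs{\avg{u}}\le\norm{u}_{L^2(\T^2)}\le\normtwo{u}\le C$, replacing $u$ by $u-\avg{u}$ preserves \eqref{psh-hyp} (with $C$ doubled) and keeps $u$ pluri-subharmonic. The purpose of hypothesis \eqref{psh-hyp} is to make \emph{every} one-variable slice uniformly tractable: for each fixed $x_2\in\T$ the map $z_1\mapsto u(z_1,x_2)$ is subharmonic on $\strip_r$ with $\sup_{\strip_r}u(\cdot,x_2)\le C$ and $\norm{u(\cdot,x_2)}_{L^2(\T)}\le\normtwo{u}\le C$, so by Lemma~\ref{measurement-sh:lemma} together with Remark~\ref{decays-sh:remark} it satisfies the uniform measurement \eqref{measurement-sh} with a constant $\less\umeas$, uniformly in $x_2$; the same holds with the two variables exchanged. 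Hence Lemma~\ref{decay-FC-1var}, Lemma~\ref{decay-BMO-1var} and Lemma~\ref{splitting-lemma-1var} are available, with the same $\umeas$, along every slice.

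Next comes the slicing itself. Put $\B:=\{x\in\T^2:\abs{u(x)}>\ep_0\}$, so $\abs{\B}<\ep_1$ by \eqref{sl-2var-weak}. By Fubini, for $x_2$ outside a set of measure $<\ep_1^{1/2}$ the horizontal section $\B_{x_2}$ satisfies $\abs{\B_{x_2}}\le\ep_1^{1/2}$, and for such $x_2$ the partial average $m(x_2):=\int_\T u(x_1,x_2)\,dx_1$ obeys $\abs{m(x_2)}\le\ep_0+\norm{u(\cdot,x_2)}_{L^2}\abs{\B_{x_2}}^{1/2}\less\ep_0$ by Cauchy--Schwarz; hence the slice $u(\cdot,x_2)$ satisfies the weak estimate \eqref{sl-1var-weak} with parameters $(\less\ep_0,\ep_1^{1/2})$. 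This is exactly where $\ep_1\le\ep_0^8$ is used: it forces $\ep_1^{1/2}\le\ep_0^4$, i.e.\ the smallness constraint $\ep_1'\le(\ep_0')^4$ required by Lemma~\ref{splitting-lemma-1var}. That lemma upgrades each such slice to threshold $\less\ep_0^{1/2}$ with exceptional $x_1$-measure $<e^{-c[\ep_0^{1/2}+\umeas\,\ep_1^{1/8}\,\ep_0^{-1/2}]^{-1}}$ (the power $1/8$ appearing because the input $\ep_1'=\ep_1^{1/2}$ is raised to the power $1/4$ inside Lemma~\ref{splitting-lemma-1var}). In parallel, $m$ is subharmonic in $x_2$ with average $0$ and measurement $\umeas$, and satisfies the same weak estimate $(\less\ep_0,\ep_1^{1/2})$; one application of Lemma~\ref{splitting-lemma-1var} to $m$, followed by a second to its output (which by then has threshold $\less\ep_0^{1/2}$ and an exponentially small exceptional set, hence automatically meets the smallness constraint), shows that $m$ is exponentially concentrated near $0$ at scale $\less\ep_0^{1/4}$. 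Combining $u=(u-m)+m$ with the analogous slicing in the $x_2$ variable then yields the asserted threshold $\ep_0^{1/4}$; composing the two one-variable boostings is what turns the power $1/2$ of the one-variable statement into $1/4=1/(2\cdot2)$ and carries the ``dirty'' term $\umeas\,\ep_1^{1/8}\,\ep_0^{-1/2}$ into the final exponent, and for general $d$ one iterates over all $d$ coordinate directions, landing on threshold $\ep_0^{1/(2d)}$ with hypothesis $\ep_1\le\ep_0^{4d}$.

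The step I expect to be the main obstacle is the bookkeeping of the \emph{exceptional} slices. A crude Fubini bound controls the set of ``bad'' $x_2$ — those with $\abs{\B_{x_2}}>\ep_0^4$, along which no slice estimate is available — only by $\less\ep_1\,\ep_0^{-4}$, which is merely polynomially small in $\ep_0$; bounding $u$ trivially there leaves a polynomial error that must be shown negligible against the exponential gain. In the regime in which this lemma is actually invoked the a-priori $\ep_1$ is small enough that this residue is immediately harmless, and one closes directly; for a fully uniform statement one instead runs the argument at the level of the splitting $u=u_0+u_1$, $u_0:=u\,\ind_{\B\comp}$, $u_1:=u\,\ind_{\B}$ (with $\norm{u_0}_\infty\le\ep_0$ and $\norm{u_1}_{L^1(\T^2)}\le\norm{u}_{L^2(\T^2)}\abs{\B}^{1/2}\less C\,\ep_1^{1/2}$), applies the one-variable BMO estimate Lemma~\ref{decay-BMO-1var} in each variable to build a multi-parameter BMO bound for $u$ on $\T^d$, and finishes with a multi-parameter John--Nirenberg inequality; the $d$-fold iteration of this mechanism is exactly what records the power $1/(2d)$ in the threshold and the power of $\ep_1$ in the exponential.
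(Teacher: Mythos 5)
The paper does not actually prove this lemma: it points to Bourgain's Lemma~4.12 in \cite{B} (the bounded pluri-subharmonic case) and asserts that hypothesis \eqref{psh-hyp} makes the one-variable Lemma~\ref{splitting-lemma-1var} applicable along every slice, so that Bourgain's argument carries over verbatim. Your reconstruction of the slicing mechanism is in the right spirit, and the preliminary observations (normalization, uniform measurements on all slices via \eqref{psh-hyp}, subharmonicity and measurement of the marginal $m(x_2)=\int u(\cdot,x_2)$) are correct.

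The substantive problem is in your handling of the exceptional slices. You correctly notice that the naive Fubini bound controls the set of ``bad'' $x_2$ only polynomially — measure $\lesssim \ep_1\,\ep_0^{-4}\lesssim\ep_0^4$. You then claim that ``in the regime in which this lemma is actually invoked the a-priori $\ep_1$ is small enough that this residue is immediately harmless, and one closes directly.'' This is false. In the only place the lemma is used (inside Theorem~\ref{qBET-thm}), the parameters are $\ep_0=n^{-a/5}$ and $\ep_1=n^{-8a/5}=\ep_0^8$ exactly, so the polynomial residue $\ep_0^4 = n^{-4a/5}$ is indeed what Fubini gives you; but the conclusion of the lemma requires an exceptional set of size $e^{-c\,\umeas^{-1}\,\ep_0^{-1/4}} = e^{-c'n^{a/20}}$, which for large $n$ is \emph{much smaller} than $n^{-4a/5}$. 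So the polynomial residue is not absorbed; the naive slicing does not prove \eqref{sl-2var-strong} even for the specific input of the paper. Your second alternative — running the splitting $u=u_0+u_1$ globally, deriving a BMO-type bound slice by slice via Lemma~\ref{decay-BMO-1var}, and concluding with John--Nirenberg — is the correct route and is what Bourgain's Lemma~4.12 actually does, but you leave it as a one-sentence sketch; in particular ``multi-parameter John--Nirenberg'' (the Chang--Fefferman product-BMO theory) is not what that argument invokes, it is an iteration of the one-dimensional John--Nirenberg inequality in each coordinate, with the bad-slice set controlled through the \emph{size of the BMO bound}, not by excising it via Markov. Since the naive-Fubini route fails and the BMO route is not carried out, the proposal does not establish the lemma as stated; you should reconstruct Bourgain's argument in detail, checking at each step that only \eqref{psh-hyp} is used (this is exactly the verification the paper delegates to the reader).
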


A similar result was proven in \cite{B} (see Lemma 4.12) for {\em bounded} pluri-subharmonic functions, using a slicing argument and the corresponding one variable result. 

The reader may verify that the argument in \cite{B} applies as long as the one variable result, i.e. Lemma~\ref{splitting-lemma-1var}, can be applied uniformly to the functions $u (\cdot, z_2)$, $u(z_1, \cdot)$, for all $z_1, z_2 \in \strip_r$,  and provided these functions are also uniformly bounded in say $L^2 (\T)$. These conditions are of course ensured by the assumption \eqref{psh-hyp}.

\smallskip

We are now ready to formulate the main result of this section, a quantitative Birkhoff ergodic theorem.

\begin{theorem}\label{qBET-thm}
Let $u \colon \strip_r^d \to [-\infty, \infty)$ be a pluri-subharmonic function satisfying
\begin{equation}\label{psh-hyp-qBET}
\sup_{z \in \strip_r^d} u (z) + \normtwo{u} \le C\,.
\end{equation}

Let $\om \in \rm{DC}_t$ and put $n_0 := t^{-2}$. There is $a = a (d) > 0$ so that for all $n \ge n_0$
\begin{equation}\label{qBET-star-prop}
\abs{ \{ x \in \T^d \colon \abs{ \frac{1}{n} \sum_{j=0}^{n-1} u (x + j \om) - \avg{u} } > \umeas \, n^{- a} \} } < e^{- c \, n^a},
\end{equation}
where $\umeas = \mathscr{O} \bigl( \frac{C_r }{r} \, C \bigr)$ and $c = \mathscr{O} (1)$.
\end{theorem}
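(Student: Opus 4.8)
\textbf{Proof plan for Theorem~\ref{qBET-thm}.} The strategy is the classical Bourgain-style scheme: obtain a weak a priori concentration inequality from the decay of Fourier coefficients, then feed it into the boosting lemma to upgrade it to the desired subexponentially small exceptional set. First I would record the uniform measurement: by Remark~\ref{decays-sh:remark} applied in each variable, the hypothesis~\eqref{psh-hyp-qBET} implies that each one-variable slice of $u$ satisfies~\eqref{measurement-sh} with $\umeas = \mathscr{O}\bigl(\frac{C_r}{r}\,C\bigr)$, uniformly over the other variables; in particular Lemma~\ref{decay-FC-1var} yields $\abs{\hat u(k)} \less \umeas\,\abs{k_j}^{-1}$ whenever $k = (k_1,\dots,k_d)$ has its $j$-th coordinate nonzero, and hence $\abs{\hat u(k)} \less \umeas\,(\min_j \abs{k_j})^{-1}$ for $k \neq 0$. (If one prefers, one can slice and apply the one-variable bound coordinate by coordinate; the point is only that $\hat u$ decays polynomially off the coordinate hyperplanes, which after the slicing is all one needs.)

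Next I would truncate the Fourier series at level $K$, splitting $u - \avg{u} = P_K + R_K$ where $P_K = \sum_{0 < \abs{k} \le K} \hat u(k)\, e(k\cdot x)$ and $R_K$ is the tail. The tail is controlled in $L^2$ (or $L^1$) by the Fourier decay: $\norml{R_K}{2}^2 \less \umeas^2 \sum_{\abs{k}>K}(\min_j\abs{k_j})^{-2}$, which is $\less \umeas^2 K^{-1}$ up to a $d$-dependent power of $\log K$ — a crude polynomial bound suffices. For the low-frequency part, the Diophantine condition~\eqref{DC} gives, for $0 < \abs{k} \le K$, the bound $\abs{\frac{1}{n}\sum_{j=0}^{n-1} e(k\cdot(x+j\om)) } = \abs{\frac1n \cdot \frac{e(nk\cdot\om)-1}{e(k\cdot\om)-1}} \less \frac{1}{n\,\norm{k\cdot\om}} \less \frac{\abs{k}^{d+\delta_0}}{n\,t} \le \frac{K^{d+\delta_0}}{n\,t}$, so the Birkhoff average of $P_K$ is bounded uniformly in $x$ by $\less \umeas\, K \cdot \frac{K^{d+\delta_0}}{n t} = \umeas\, \frac{K^{d+1+\delta_0}}{n t}$. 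Choosing $K$ a small power of $n$ (so that, using $n \ge n_0 = t^{-2}$, the factor $1/t$ is absorbed — e.g. $K = n^{\theta}$ with $\theta$ small depending on $d$), both the $P_K$-contribution and the Chebyshev estimate on the $R_K$-contribution give: outside a set of measure $< n^{-\text{(small power)}}$, one has $\abs{\frac1n\sum_j u(x+j\om) - \avg u} < \umeas\, n^{-a_0}$ for some $a_0 = a_0(d) > 0$. This is the weak a priori estimate, of the form~\eqref{sl-2var-weak} (for $d=2$; in general~\eqref{sl-2var-weak} with $\ep_1 \le \ep_0^{2d}$) with $\ep_0 \asymp \umeas\, n^{-a_0}$ and $\ep_1 \asymp n^{-(\text{small power})}$, and after possibly shrinking $a_0$ the constraint $\ep_1 \le \ep_0^{2d}$ is met for $n$ large.

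Finally I would invoke the boosting Lemma~\ref{splitting-lemma-2var} (its $d$-variable analogue): it converts the polynomially small exceptional set into one of measure $< e^{-c\,\umeas^{-1}\ep_0^{-1/2d}} = e^{-c\,n^{a}}$ for a suitable $a = a(d) > 0$, absorbing $\umeas$ into the constant (the width $r$, and hence $C_r$, being fixed). Tracking the exponents through the chain $\text{Fourier decay} \to \text{weak estimate} \to \text{boosting}$ fixes the admissible value of $a(d)$; only a positive power is claimed, so I would not optimize. The main obstacle — and the place where the several-variable setting genuinely differs from the one-variable one — is the very first step: securing the uniform measurement~\eqref{measurement-sh} on \emph{every} coordinate slice of the unbounded pluri-subharmonic function $u$. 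This is exactly what hypothesis~\eqref{psh-hyp-qBET}, via the $\normtwo{\cdot}$ norm and Remark~\ref{decays-sh:remark}, is designed to deliver; without control of $\norml{u(\cdots, x_j, \cdots)}{L^2(\T)}$ uniformly in the frozen variables — which in the cocycle application is precisely what Theorem~\ref{L2:bound} and the change of coordinates provide — the one-variable lemmas could not be applied slice by slice and the whole scheme would collapse. Everything after that first step is a routine adaptation of the bounded, one-variable argument of \cite{B, B-d} via the slicing already used for Lemma~\ref{splitting-lemma-2var}.
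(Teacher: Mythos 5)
Your overall scheme is the paper's: slice to get one-variable measurements, Fourier-decay plus Fej\'er/Diophantine to get a weak a priori estimate, then Lemma~\ref{splitting-lemma-2var} to boost it. The only substantive deviation is in how you control the high-frequency tail, and that is where your argument as written is wrong.

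You pass from the slicewise Fourier decay $\abs{\hat u(l_1; x_2)} \le \umeas\abs{l_1}^{-1}$ (uniform in $x_2$) to a pointwise bound $\abs{\hat u(k)} \less \umeas\,(\min_j\abs{k_j})^{-1}$ on the genuine double Fourier coefficients, and then claim $\sum_{\abs{k}>K}(\min_j\abs{k_j})^{-2} \less K^{-1}$ up to logs. That sum is in fact \emph{divergent}: already in $d=2$, the contribution of $k=(1,k_2)$ with $\abs{k_2}>K$ is $\sum_{\abs{k_2}>K} 1 = \infty$. Replacing $\min$ by $\max$ (or by $\abs{k_1}+\abs{k_2}$, which is what the pointwise bound actually gives) does not save it either: for $d\ge 2$ you get $\sum_{s>K} s^{d-1}\cdot s^{-2}$, which still diverges for $d\ge2$. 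So no crude polynomial bound on $\norml{R_K}{2}^2$ follows from the \emph{pointwise} Fourier decay.

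The paper's proof does something slightly but crucially stronger at exactly this point: from $\abs{\hat u(l_1; x_2)} \le \umeas\abs{l_1}^{-1}$ uniformly in $x_2$, it applies Parseval in $x_2$ to get
\[
\sum_{l_2\in\Z} \abs{\hat u(l_1,l_2)}^2 \;=\; \norm{\hat u(l_1,\cdot)}_{L^2_{x_2}}^2 \;\le\; \frac{\umeas^2}{\abs{l_1}^2}\,,
\]
and symmetrically in $l_2$. Summing this over $\abs{l_1}\ge K/2$ and over $\abs{l_2}\ge K/2$ gives $\sum_{\abs{l_1}+\abs{l_2}\ge K}\abs{\hat u(l)}^2 \less \umeas^2/K$, which is the convergent tail estimate you need. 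This is a one-line repair of your argument — you already have the uniform slicewise estimate, you only dropped the Parseval step — but as written your tail bound is false, and without it the Chebyshev step produces nothing. Once that line is inserted, your splitting $u-\avg u = P_K+R_K$ (pointwise bound on the Birkhoff average of $P_K$, Chebyshev on $R_K$) is a cosmetic reorganization of the paper's single $L^2$ estimate on the full Birkhoff deviation, and the remainder of your plan (choice $K\asymp n^\theta$, absorption of $1/t$ via $n\ge t^{-2}$, boosting) matches the paper.
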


\begin{proof} We prove this statement for $d=2$ variables, but the same argument holds for any number of variables. We follow the same strategy used in the proof of Proposition 4.1 in \cite{sK2}, as the main ingredients of the argument depend only on having a uniform decay  on the  Fourier coefficients of $u$ (separately in each variable) and on the boosting Lemma,~\ref{splitting-lemma-2var}, both of which are ensured by the assumption \eqref{psh-hyp-qBET}. For the reader's convenience, we present the complete argument here. 

The assumption \eqref{psh-hyp-qBET} implies that the bound \eqref{measurement-sh:hyp} holds in each variable and with the same constant $C$, i.e. for all subharmonic functions $u (\cdot, z_2)$, $u(z_1, \cdot)$ with $z_1, z_2 \in \strip_r$. Hence by Remark~\ref{decays-sh:remark}, the bound \eqref{measurement-sh} with $\umeas = \mathscr{O} \bigl( \frac{C_r }{r} \, C \bigr)$ holds for all these functions as well. 

This ensures that we can apply 
Lemma~\ref{decay-FC-1var} on the decay of the Fourier coefficients in each variable and obtain
 \begin{equation}\label{Riesz} 
\sup_{x_2 \in \T} \, \abs{\hat{u} (l_1, x_2)}  \le \umeas \cdot \frac{1}{\sabs{l_1}}  \ \text{ and  } \ 
\sup_{x_1 \in \T} \, \abs{\hat{u} (x_1, l_2)}  \le \umeas \cdot \frac{1}{\sabs{l_2}} 
\end{equation}
 for all $l = (l_1, l_2) \in \Z^2$ with $l_1 \neq 0, l_2 \neq 0$.
 
Expand $u (x) = u \x$ into a Fourier series
$$u \x = \avg{u} + \sum_{\genfrac{}{}{0cm}{}{\li \in \Z^2}{\li \neq (0,0)}} \, \hat{u} \li \cdot e (\li \cdot \x)\,.$$

Then the Birkhoff averages have the form
\begin{align*}
& \frac{1}{n} \sum_{j = 0}^{n-1}  u (\x + j \omm)  \\
& =   \avg{u} +  \sum_{\genfrac{}{}{0cm}{}{\li \in \Z^2}{\li \neq (0,0)}} \, \hat{u} \li \cdot e (\li \cdot \x) \cdot \Bigl( \frac{1}{n} \sum_{j = 0}^{n-1}  e ( j \, \li \cdot \omm) \Bigr )  \\
& =   \avg{u} +  \sum_{\genfrac{}{}{0cm}{}{\li \in \Z^2}{\li \neq (0,0)}} \, \hat{u} \li \cdot e (\li \cdot \x) \cdot  K_n (\li \cdot \omm)\,, 
\end{align*}
where we denoted by $K_n (y)$ the Fej\'{e}r kernel on $\T$:
$$K_n (y) = \frac{1}{n} \sum_{j = 0}^{n-1}  e ( j \, y) \, = \, \frac{1}{n} \, \frac{1 - e (n y)}{1 - e (y)}\,.$$

Clearly $K_n (y)$ has the following bound:
\begin{equation}\label{fejerkernelbound}
\abs{K_n (y)} \le \min \Bigl\{ 1, \frac{1}{n \norm{y}} \Bigr\}\, 
\end{equation}
where $\norm{y}$ was defined in~\eqref{DC}.

We then have:
\begin{align*}
& \bnorm{ \frac{1}{n} \sum_{j = 0}^{n-1}  u (\x + j \omm) - \avg{u} }_{L^2(\T^2)}^2  \\
&  = \sum_{\genfrac{}{}{0cm}{}{\li \in \Z^2}{\li \neq (0,0)}} \, \abs{\hat{u} \li}^2 \cdot \abs{K_n (\li \cdot \omm)}^2 \\
& =  \sum_{1 \le \sabs{l_1} + \sabs{l_2} <  K} \, \abs{\hat{u} \li}^2 \cdot \abs{K_n (\li \cdot \omm)}^2 \\
& + \sum_{\sabs{l_1} + \sabs{l_2}  \ge K} \, \abs{\hat{u} \li}^2 \cdot \abs{K_n (\li \cdot \omm)}^2\,.
\end{align*}
We will estimate the second sum above using the bounds \eqref{Riesz} on the Fourier coefficients of $u \x$ and the first sum using the Diophantine condition on the frequency $\omm$. The splitting point $K$ will be chosen to optimize the sum of these estimates.

Clearly \eqref{Riesz} implies:
$$\sum_{l_2 \in \Z} \, \abs{\hat{u} \lli}^2 = \norm{\hat{u} (l_1, x_2)}_{L_{x_2}^2(\T)}^2 \le \, \Bigr( \umeas \, \frac{1}{\sabs{l_1}}\Bigl)^2 \le  \, \umeas^2 \, \frac{1}{\sabs{l_1}^2}\,,$$
and
$$\sum_{l_1 \in \Z} \, \abs{\hat{u} \lli}^2 = \norm{\hat{u} (x_1, l_2)}_{L_{x_1}^2(\T)}^2 \le \, \Bigr( \umeas \, \frac{1}{\sabs{l_2}}\Bigl)^2 \le  \, \umeas^2 \, \frac{1}{\sabs{l_2}^2}\,.$$

Then we have:
\begin{align*}
 & \sum_{\sabs{l_1} + \sabs{l_2}  \ge K} \, \abs{\hat{u} \li}^2 \cdot \abs{K_n (\li \cdot \omm)}^2   \le \sum_{\sabs{l_1} + \sabs{l_2}  \ge K} \, \abs{\hat{u} \li}^2 \\
  &  \qquad \le  \sum_{\li \colon \sabs{l_1} \ge K/2} \, \abs{\hat{u} \li}^2  +    \sum_{\li \colon \sabs{l_2} \ge K/2}  \abs{\hat{u} \li}^2    \less  \,  \umeas^2 \, \frac{1}{K}\,. \\
 \end{align*}

Estimate \eqref{Riesz} clearly impies:
$$\abs{\hat{u} \li} \le \umeas \, \frac{1}{\sabs{l_1} + \sabs{l_2}}\,.$$

Then using the Diophantine condition on $\omm$ and \eqref{fejerkernelbound}, we obtain:
\begin{align*}
& \sum_{1 \le \sabs{l_1} + \sabs{l_2}  <  K} \, \abs{\hat{u} \li}^2 \cdot \abs{K_n (\li \cdot \omm)}^2 \\
& \qquad \le  \umeas^2 \, \sum_{1 \le \sabs{l_1} + \sabs{l_2}  <  K} \,\frac{1}{(\sabs{l_1} + \sabs{l_2})^2} \cdot \frac{1}{n^2 \, \norm{\li \cdot \omm}^2} \\
& \qquad \le  \umeas^2 \, \sum_{1 \le \sabs{l_1} + \sabs{l_2}  <  K} \,\frac{1}{(\sabs{l_1} + \sabs{l_2})^2} \cdot \frac{(\sabs{l_1} + \sabs{l_2})^{2 (d+1)}}{n^2 \, t^2} \\
& \qquad  \less   \umeas^2 \,  \frac{K^{2 (d+1)}}{n^2 \, t^2}\,.
\end{align*}

We conclude:
$$\bnorm{ \frac{1}{n} \sum_{j = 0}^{n-1}  u (\x + j \omm) - \avg{u} }_{L^2(\T^2)} \ \le \ \umeas \, \Bigr( \frac{1}{K^{1/2}} + \frac{K^{(d+1)}}{n \, t} \Bigl) \le \umeas \, n^{- a}$$
for some positive constant $a$ that depends on $d$ and for $n \ge 1/ t^2$.

Applying Chebyshev's inequality, we have:
\begin{equation}\label{apriori-avshifts}
\abs{ \{ x \in \T^2 : \abs{ \frac{1}{n} \sum_{j = 0}^{n-1}  u (x + j \om)  \, -  \avg{u} }  > \umeas \, n^{- a/5} \} } \ < \  
 n^{- 8 a/5}\,.  
\end{equation}

This estimate is of course too weak, since the size of the exceptional set is only polynomially small. We will boost it by using Lemma~\ref{splitting-lemma-2var}. 

Consider the average 
$$v (z) := \frac{1}{\umeas} \, \frac{1}{n} \sum_{j=0}^{n-1} u (z + j \om)\,.$$

Then $v$ is a pluri-subharmonic function on $\strip_r^2$ and it clearly satisfies the bounds
$\displaystyle \sup_{z \in \strip_r^2} v (z)  \le \frac{1}{\umeas} \, C \le 1$ and  $\normtwo{ v }  \le \frac{1}{\umeas} \,  \normtwo{u} \le 1$,
hence
$$\sup_{z \in \strip_r^2} v (z)  + \normtwo{ v }  \less 1\,.$$

Since $\avg{ v} = \frac{1}{\umeas} \avg{u}$, from \eqref{apriori-avshifts} we have:
$$\abs{ \{ x \in \T^2 : \abs{ v (x) -  \avg{ v} } > \, n^{- a/5} \} } \ < \  n^{- 8 a/5}\,.  $$

Let $\ep_0 := n^{- a/5}$, $\ep_1 := n^{- 8 a/5} = \ep_0^8$. 

Apply Lemma~\ref{splitting-lemma-2var} to the function $v (z)$ to obtain:
$$\abs{ \{ x \in \T^2 : \abs{ v (x) -  \avg{ v} } > \, n^{- a/20} \} } \ < \  e^{- c \, n^{a/20}}\,,$$
where $c = \mathscr{O} (1)$. 
This completes the proof. 
\end{proof}

The assumption \eqref{psh-hyp-qBET} in Theorem~\ref{qBET-thm} will not necessarily be satisfied by the pluri-subharmonic functions associated to our model. That is because they are of the form $u(z) = \log \norm{A (z)}$, for some matrix valued analytic function $A(z)$, and $\norm{A(z)}$ may very well vanish identically along some lines parallel to coordinate axes. However, they will satisfy a weaker property that will allow us to handle their singularities via a change of coordinates, and replace them with pluri-subharmonic functions for which  \eqref{psh-hyp-qBET} does hold. We describe this idea in the following proposition.

\begin{proposition}\label{qBET-main}
Let $f \in \analyticf{d}$, $f \not \equiv 0$, $\om \in \rm{D C}_t$ and $C > 0$. There are constants $\delta = \delta (f, r) > 0$,  $a = a (d) > 0$, $k_0 = k_0 (f, r, d) \in \N$ and $\umeas = \umeas (f, r, C) < \infty$ such that if $u \colon \strip_r^d \to [-\infty, \infty)$ is a pluri-subharmonic function satisfying the bounds
\begin{equation}\label{ulb-qBET}
- C + \frac{1}{m} \sum_{j=0}^{m-1} \log \abs{ g (z + j \om) } \le u (z) \le C \quad \text{ for all } z \in \strip_r^d,
\end{equation}
for some $g \in \analyticf{d}$ with $\normr{g-f} < \delta$ and for some $m \ge 1$, then   
\begin{equation}\label{qBET-star}
\abs{ \{ x \in \T^d \colon \abs{ \frac{1}{n} \sum_{j=0}^{n-1} u (x + j \om) - \avg{u} } > \umeas \,n^{- a} \} } < e^{-  n^a}
\end{equation}
holds for all  $n \ge n_0 := t^{-2} \, k_0$.
\end{proposition}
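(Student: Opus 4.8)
The plan is to reduce Proposition~\ref{qBET-main} to the quantitative Birkhoff ergodic theorem, Theorem~\ref{qBET-thm}, by means of a single linear change of coordinates on the torus, chosen according to Theorem~\ref{L2:bound}. The only obstruction to applying Theorem~\ref{qBET-thm} directly to $u$ is that $u$ may equal $-\infty$ along lines parallel to the coordinate axes, so that $\normtwo{u}$ need not be finite; the structural lower bound~\eqref{ulb-qBET} is exactly what lets us remove this obstruction after the change of variables. Concretely, I would first apply Theorem~\ref{L2:bound} to the fixed $f$ to get $\varepsilon = \varepsilon(f,r) > 0$, $C_1 = C_1(f,r) > 0$ and $M \in \SL(d,\Z)$ with $\normtwo{\log\abs{g\circ M}} \le C_1$ whenever $\normr{g-f} < \varepsilon$, and set $\delta := \varepsilon$. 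Since $M$ is integral and unimodular, $x\mapsto Mx \pmod{\Z^d}$ is a measure-preserving automorphism of $\T^d$ whose holomorphic extension carries $\strip_{r'}^d$ into $\strip_r^d$ for some $r' = r'(f,r) > 0$ (this coordinate change is performed once; cf.\ Remark~\ref{measurement-sh:remark2}, where it is noted that $C_{r'}$ may then be treated as a universal constant). Putting $\tilde\om := M^{-1}\om$, $\tilde u := u\circ M$, $\tilde g := g\circ M$, the function $\tilde u$ is pluri-subharmonic on $\strip_{r'}^d$; from $\norm{k\cdot\tilde\om} = \norm{(M^{-T}k)\cdot\om}$ with $M^{-T}$ integral unimodular one gets $\tilde\om \in \rm{DC}_{t'}$ for $t' := t\,\norm{M^{-1}}^{-(d+\delta_0)}$; and since $M(z+j\tilde\om) = Mz + j\om$, the hypothesis~\eqref{ulb-qBET} turns into
\begin{equation*}
-C + \frac1m\sum_{j=0}^{m-1}\log\abs{\tilde g(z+j\tilde\om)} \le \tilde u(z) \le C, \qquad z\in\strip_{r'}^d .
\end{equation*}
Under the measure-preserving substitution $y = Mx$, the Birkhoff sums $\frac1n\sum u(x+j\om)$, their average, and the exceptional sets for $(u,\om)$ correspond to those for $(\tilde u,\tilde\om)$, so it suffices to prove~\eqref{qBET-star} for $\tilde u$, $\tilde\om$.

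The second step is to verify that $\tilde u$ satisfies the hypothesis~\eqref{psh-hyp-qBET} of Theorem~\ref{qBET-thm} with a constant depending only on $C$, $f$, $r$ --- crucially, \emph{uniformly} in $m$ and in the choice of $g$. The upper bound gives $\sup_{\strip_{r'}^d}\tilde u \le C$ immediately. Combining the two displayed inequalities pointwise yields $\abs{\tilde u(z)} \le C + \frac1m\sum_{j=0}^{m-1}\abs{\log\abs{\tilde g(z+j\tilde\om)}}$; taking $L^2$-norms along an arbitrary coordinate slice and using Minkowski's inequality together with the fact that a coordinate slice of $\log\abs{\tilde g(\cdot+j\tilde\om)}$ is, up to an irrelevant translation within that coordinate (Lemma~\ref{triplenorm-transl-inv}), a coordinate slice of $\log\abs{\tilde g}$, whose slice-$L^2$ norm is $\le \normtwo{\log\abs{\tilde g}} \le C_1$, we obtain $\normtwo{\tilde u} \le C + C_1$. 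Hence $\sup_{\strip_{r'}^d}\tilde u + \normtwo{\tilde u} \le 2C + C_1$.

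It then remains to apply Theorem~\ref{qBET-thm} to $\tilde u$ on $\strip_{r'}^d$ with this constant and with $\tilde\om \in \rm{DC}_{t'}$, obtaining $a = a(d) > 0$, $\umeas = \mathscr{O}\!\bigl(\tfrac{C_{r'}}{r'}(2C + C_1)\bigr) = \umeas(f,r,C)$ and $c = \mathscr{O}(1)$ with the stated concentration bound valid for $n \ge (t')^{-2} = t^{-2}\norm{M^{-1}}^{2(d+\delta_0)}$; transferring back by $y = Mx$, and absorbing the universal constant $c$ into a slight decrease of $a$ (at the cost of enlarging the threshold by a universal amount, so that it remains of the form $t^{-2}k_0$ with $k_0 = k_0(f,r,d)$), gives exactly~\eqref{qBET-star}. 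The place where the argument is delicate, and its real content, is the uniformity in $m$ and in $g$ of the estimate $\normtwo{\tilde u} \le C + C_1$ in the second step: this is precisely where the singularities of $u$ --- which are ``captured'' by the analytic function $g$ through~\eqref{ulb-qBET} --- are tamed by the coordinate change supplied by Theorem~\ref{L2:bound}, and it is the reason the entire change-of-coordinates machinery of Section~\ref{psh_functions} is needed here.
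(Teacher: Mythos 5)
Your proof is correct and follows essentially the same path as the paper's: conjugate by the matrix $M\in\SL(d,\Z)$ supplied by Theorem~\ref{L2:bound} to make $\normtwo{\log\abs{g\circ M}}$ uniformly bounded, observe that both the Diophantine condition (via $M^{-T}$) and the structural bound~\eqref{ulb-qBET} transfer, use the triangle inequality and Lemma~\ref{triplenorm-transl-inv} to control $\normtwo{u\circ M}$ by $2C + C_1$, and then invoke Theorem~\ref{qBET-thm} and absorb the change-of-coordinates overhead into $k_0$. The only differences are cosmetic (you work explicitly with $\tilde u,\tilde g,\tilde\om$ while the paper argues ``without loss of generality,'' and you track the Diophantine exponent $d+\delta_0$ more precisely).
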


\begin{proof}
By Theorem~\ref{L2:bound}, since $f \not \equiv 0$, there is $M \in \SL (d, \Z)$ such that in the new coordinates $x' = M x$, the analytic function $f (x')$ does not vanish identically along any hyperplane, and in fact, $\log \sabs{ f(x')}$ has the property that its $L^2$-norms separately in each variable are uniformly bounded, or in other words that $\normtwo{ \log \sabs{f (x')}}$ is bounded.  Moreover, this holds uniformly in a neighborhood of $f$. 

We note that the conclusion \eqref{qBET-star} of the theorem is coordinate agnostic.  
Indeed, if $M \in \SL (d, \Z)$ then $M^{- 1}$ preserves the measure while if $\om \in \rm{D C}_t$ then $\om' = M^{- 1} \om  \in \rm{D C}_{t'}$ for $t' = t / \norm{M}_1^{d+1}$, hence the Diophantine condition is preserved up to a constant. 

Furthermore, since $M$ is linear, $u \circ M (z)$ is also pluri-subharmonic, and its domain contains $\strip_{r'}$, where $r' \sim r/\norm{M}$. That is because the linear map induced by $M$ expands the imaginary direction, hence the width of the domain of $u \circ M (z)$ is proportionally smaller. However, $\norm{M}$ is a fixed constant depending upon $M$, hence only upon $f$. 

Hence it is enough to prove   \eqref{qBET-star} for $u (x)$ replaced by $u \circ M (x)$ and for $\om$ replaced by $M^{- 1} \om$.

Moreover, the assumption \eqref{ulb-qBET} holds for $u \circ M (x)$ provided we replace $g (x)$ by $g \circ M (x)$, which is still $\delta$-close to $f \circ M (x)$. 

\smallskip

Therefore, with no loss of generality, we may assume that for $\delta = \delta (f, r)$ small enough, and for some finite constant $C_0 = C_0 (f, r)$ we have: if $g \in \analyticf{d}$ with $\normr{g-f} < \delta$, then
\begin{equation}\label{qbet-1}
\normtwo{ \log \sabs{g}}  \le C_0\,.
\end{equation}

From \eqref{ulb-qBET} we have:
$$
\abs{ u(x) } \le 2 C + \frac{1}{m} \sum_{j=0}^{m-1} \abs{ \log \abs{ g (x + j \om) } } \quad \text{for all } x \in \T^d.
$$

Then using Lemma~\ref{triplenorm-transl-inv},
$$
\normtwo{u} \le 2 C + \normtwo{ \log \sabs{g}} \le 2 C + C_0,
$$
hence
$$
\sup_{z \in \strip_r^d} u (z) + \normtwo{u} \le C\,.
$$

This shows that the assumption \eqref{psh-hyp-qBET} in Theorem~\ref{qBET-thm} is now (after a change of coordinates) satisfied

We apply Theorem~\ref{qBET-thm} to conclude that \eqref{qBET-star} holds with $\umeas = \mathscr{O} \bigl(  \frac{C_r}{r} \, ( 3C + C_0) \bigr) = \umeas (f, r, C)$ and for all $n \ge n_0$, where $n_0 :=  t^{-2} \, k_0$.
We choose $k_0 = k_0 (f, r, d)$ such that $k_0 \ge  \norm{M}_1^{2 (d+1)}$ and such that the constant $c = \mathscr{O} (1)$ in  \eqref{qBET-star-prop} is absorbed.
\end{proof}

\section{The proof of the fiber LDT estimate}
\label{ldt_qp_proof}
Given any cocycle $A \in \cocycles_m$, we derive some uniform measurements on $A$ which will allow us to apply the base LDT estimate in Proposition~\ref{qBET-main} to all the iterates of any cocycle $B$ near $A$, in a {\em uniform} way. This, combined with an almost invariance under the base dynamics property for the iterates of the cocycle, will lead to the proof of the uniform fiber LDT estimate.

\subsection{Uniform measurements on the cocycle}\label{unif-meas}
We introduce some notations. For a cocycle $A \in \qpcmat{d}$, let 
$$f_A (z) := \det [A (z) ] \,.$$ 
Then clearly $f_A \in \analyticf{d}$. 

Moreover, for every scale $n \ge 1$, let
$$
 \un{n}{A} (z) :=  \frac{1}{n} \log \norm{\An{n} (z)} \,.
 $$
 Note that due to the analyticity of the cocycle $A (z)$, the functions $\un{n}{A} (z)$ are {\em pluri-subharmonic} on $\strip_r^d$. This property is crucial in establishing the fiber LDT estimate.
 
 We denote the space averages of these functions by
 $$
 \LE{n}_1 (A)  = \int_{\T^d}  \un{n}{A} (x) \, d x = \int_{\T^d}  \frac{1}{n} \log \norm{\An{n} (x)} \, d x\,. 
 $$

 The following proposition introduces some locally uniform measurements on a cocycle. It shows that the above functions are bounded in the $L^2$ - norm, uniformly in the scale, and uniformly in a neighborhood of a given non identically singular cocycle. It also shows that the failure of the above functions to be bounded in the $L^\infty$-norm is captured by Birkhoff averages of a one dimensional cocycle.
 
 \begin{proposition} \label{uplowbounds}
 Given a cocycle $A \in \qpcmat{d}$ with $f_A = \det [A] \not \equiv 0$, there are constants $\delta = \delta (A) > 0$ and $C = C (A) < \infty$, such that for any cocycle  $B \in \qpcmat{d}$, if $\normr{B-A} < \delta$, then
 \begin{align}
 & f_B = \det [B]   \not \equiv 0\,, \label{ulbounds-1} \\
 & \norm{ \log \abs{f_B} }_{L^2 (\T^d)}   \le C \label{ulbounds-2}
 \end{align}
 and for all $n \ge 1$  we have 
 \begin{align}
 & - C + \frac{1}{n} \sum_{i=0}^{n-1} \log \abs{f_B (\transl^i z)}   \le  \un{n}{B} (z)   \le C \,, \label{ulbounds-3} \\
 & \quad  \norm{ \un{n}{B}}_{L^2 (\T^d)}   \le C\,. \label{ulbounds-4}
 \end{align}
 \end{proposition}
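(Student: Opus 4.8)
The plan is to establish each of the four claims in turn, with the analytic-function estimates from Section~\ref{psh_functions} doing the heavy lifting for \eqref{ulbounds-1}--\eqref{ulbounds-2}, and elementary norm manipulations handling \eqref{ulbounds-3}--\eqref{ulbounds-4}. First I would note that the map $B \mapsto f_B = \det[B]$ is continuous from $\qpcmat{d}$ into $\analyticf{d}$ (the determinant is a polynomial in the entries, and multiplication is continuous in $\normr{\cdot}$), so that $\normr{f_B - f_A}$ is small whenever $\normr{B - A}$ is small. Since $f_A \not\equiv 0$, Lemma~\ref{unif-loj} (the uniform {\L}ojasiewicz inequality) gives a $\delta_1 = \delta_1(A) > 0$ and constants $S, b$ so that every $g \in \analyticf{d}$ with $\normr{g - f_A} < \delta_1$ satisfies the {\L}ojasiewicz bound \eqref{unif-loj-star}; in particular such $g \not\equiv 0$, which is precisely \eqref{ulbounds-1} once we shrink $\delta$ so that $\normr{B-A} < \delta$ forces $\normr{f_B - f_A} < \delta_1$. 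Then Lemma~\ref{unif-loj-L2} (or more directly Remark~\ref{r-unif-loj-L2}) applied to $g = f_B$ yields a single constant $C_1 = C_1(A) < \infty$ with $\norm{\log\abs{f_B}}_{L^2(\T^d)} \le C_1$, which is \eqref{ulbounds-2}.

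For the upper bound in \eqref{ulbounds-3} and \eqref{ulbounds-4}, I would use submultiplicativity: $\norm{\An{n}(z)} \le \prod_{i=0}^{n-1} \norm{B(z + i\om)} = \prod_{i=0}^{n-1}\norm{B(\transl^i z)}$, hence $\un{n}{B}(z) = \frac1n \log\norm{\Bn{n}(z)} \le \frac1n \sum_{i=0}^{n-1} \log\norm{B(\transl^i z)} \le \sup_{\strip_r^d}\log\norm{B} =: C_2(A)$, where $C_2(A)$ is finite and locally bounded since $\normr{B} \le \normr{A} + \delta$. For the lower bound in \eqref{ulbounds-3}, the key algebraic identity is $\norm{\Bn{n}(z)} \ge \abs{\det \Bn{n}(z)}^{1/m} \cdot \norm{\Bn{n}(z)}^{-(m-1)/m}$ — more cleanly, for any matrix $X \in \Mat(m,\R)$ one has $\norm{X}^{m-1}\,\norm{X} \ge \norm{X \wedge \cdots \wedge X}_{\text{order }m} = \abs{\det X}$, so $\norm{X} \ge \abs{\det X}^{1/m}\,\norm{X}^{-(m-1)}$, equivalently $\log\norm{X} \ge \frac1m \log\abs{\det X} - \frac{m-1}{m}\log\norm{X}$; but a simpler route is $\norm{X} \ge \abs{\det X} / \norm{X}^{m-1}$, giving $\log\norm{X} \ge \log\abs{\det X} - (m-1)\log\norm{X}$. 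Using $\det \Bn{n}(z) = \prod_{i=0}^{n-1}\det B(\transl^i z) = \prod_{i=0}^{n-1} f_B(\transl^i z)$ and the already-established upper bound $\log\norm{\Bn{n}(z)} \le n C_2$, this yields $\log\norm{\Bn{n}(z)} \ge \sum_{i=0}^{n-1}\log\abs{f_B(\transl^i z)} - (m-1)\,n\,C_2$, i.e. $\un{n}{B}(z) \ge -\, (m-1)C_2 + \frac1n\sum_{i=0}^{n-1}\log\abs{f_B(\transl^i z)}$, which is \eqref{ulbounds-3} with $C = \max\{C_2,\, (m-1)C_2,\, C_1\}$ (enlarging $C$ as needed).

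Finally, for \eqref{ulbounds-4} I would deduce it from \eqref{ulbounds-3} and \eqref{ulbounds-2}: from $\abs{\un{n}{B}(z)} \le C + \frac1n\sum_{i=0}^{n-1}\abs{\log\abs{f_B(\transl^i z)}}$ (combining the two-sided bound), Minkowski's inequality in $L^2(\T^d)$ gives $\norm{\un{n}{B}}_{L^2} \le C + \frac1n\sum_{i=0}^{n-1}\norm{\log\abs{f_B}\circ\transl^i}_{L^2} = C + \norm{\log\abs{f_B}}_{L^2} \le C + C_1$, using translation-invariance of Haar measure in the penultimate step; absorbing $C_1$ into $C$ gives \eqref{ulbounds-4}. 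I expect the main (though still modest) obstacle to be bookkeeping the dependence of all the constants on $A$ rather than on $B$ or on $n$: one must check at each stage that the {\L}ojasiewicz constants, the sup-norm bound $C_2$, and the $L^2$ bound $C_1$ are stable under the perturbation $\normr{B-A} < \delta$ and independent of the scale $n$ — the latter being automatic since every bound above is either scale-free (sup norm) or an average over $n$ translates of a fixed function. The choice of the single $\delta$ is then $\delta := \delta_1(A)$ composed with the (Lipschitz) continuity modulus of $B \mapsto f_B$, and $C := C(A)$ is the maximum of the finitely many constants produced.
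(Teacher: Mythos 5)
Your proposal is correct and follows the same overall strategy as the paper: apply Lemma~\ref{unif-loj} and Remark~\ref{r-unif-loj-L2} for \eqref{ulbounds-1}--\eqref{ulbounds-2} via local Lipschitz continuity of $B\mapsto f_B$, use submultiplicativity for the upper bound in \eqref{ulbounds-3}, derive the lower bound from a determinant identity, and integrate the two-sided bound together with translation invariance for \eqref{ulbounds-4}. The one place you diverge mildly is the lower bound in \eqref{ulbounds-3}: the paper runs Cramer's rule, writing $\prod_i f_B(T^i z)\cdot I = \Bn{n}(z)\cdot\mathrm{adj}(B(z))\cdots\mathrm{adj}(B(T^{n-1}z))$ and bounding $\norm{\mathrm{adj}(B)}\lesssim\normr{B}^{m-1}$, whereas you invoke the elementary singular-value inequality $\norm{X}^m\ge\abs{\det X}$ and then feed the already-proven upper bound $\log\norm{\Bn{n}(z)}\le nC_2$ into the rearranged form $\log\norm{X}\ge\log\abs{\det X}-(m-1)\log\norm{X}$. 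Both routes land on the same estimate with a constant $\sim(m-1)\log\normr{A}$; yours avoids the adjugate entirely, at the cost of the slightly unusual step of using the upper bound to control a self-referential term on the right-hand side (which is logically sound since the term appears with a minus sign). One small caution: the penultimate display in your lower-bound paragraph ($\norm{X}\ge\abs{\det X}^{1/m}\norm{X}^{-(m-1)}$) is mis-stated, but the "simpler route" $\norm{X}\ge\abs{\det X}/\norm{X}^{m-1}$ that you actually use is correct, so the argument stands.
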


 \begin{proof}
 Clearly the map $\qpcmat{d} \ni B \mapsto \det[B] \in \analyticf{d}$ is locally Lipschitz, hence we can choose $\delta = \delta (A) > 0$ small enough such that if $\norm{B-A}_r < \delta$, then the analytic function $g := f_B$ satisfies the {\L}ojasiewicz inequality ~\eqref{unif-loj-star} with the same constants $S = S(f_A)$, $b=b(f_A)$ as $f := f_A$ (see Lemma~\ref{unif-loj}). In particular, $f_B \not \equiv 0$ and from Remark~\ref{r-unif-loj-L2} we have the uniform $L^2$ - bound:
 $$  \norm{ \log \abs{f_B} }_{L^2 (\T^d)}   \le C (\norm{f_A}_{\infty}, S, b) \le C_2 (A)\,.$$
 
The upper bound in \eqref{ulbounds-3} is clear: if $\normr{B-A} < \delta$, then $\normr{B} \sim \normr{A}$, and since for every $z \in \strip_r^d$ we have
 $$\norm{ \Bn{n} (z) } \le \prod_{i=0}^{n-1} \norm{ B (\transl^i z) }
\le \normr{B}^n \,,$$

we conclude that 
$$\un{n}{B} (z) = \frac{1}{n} \log \norm{\Bn{n} (z) } \le \log \normr{B} < C_1(A)\,.$$

To establish the lower bound, we use Cramer's rule:
$$\det [B (z)] \cdot I = B (z)  \cdot {\rm adj} (B (z)) \;. $$ 
Hence
\begin{align*}
\prod_{i=0}^{n-1} f_B (T^i z) \cdot I  & = \prod_{i=0}^{n-1} \det [B (T^i z)] \cdot I\\
& = B (T^{n-1} z)   \ldots   B (z) \cdot  {\rm adj} (B (z))  \ldots  {\rm adj} (B (T^{n-1} z))\\
& = \Bn{n} (z) \cdot {\rm adj} (B (z))  \ldots  {\rm adj} (B (T^{n-1} z))\;.
\end{align*}
Clearly
$$\norm{ {\rm adj} (B (z))} \less \norm{B (z)}^{m-1} \le \normr{B}^{m-1} \quad \text{ for all } z\;.$$
This implies, for some $C_1 = C_1(A) \sim \abs{\log \normr{A}}$,
$$
\un{n}{B} (z) = \frac{1}{n} \log \norm{\Bn{n} (z) }  \ge - C_1 +  \frac{1}{n} \sum_{i=0}^{n-1} \log \abs{f_B (\transl^i z)}\,,
$$
which establishes \eqref{ulbounds-3}. 
Moreover, \eqref{ulbounds-3} also implies that for all $x \in \T^d$,
$$\abs{ \un{n}{B} (x) } \le 2 C_1 + \frac{1}{n} \sum_{i=0}^{n-1} \abs{ \log \abs{f_B (\transl^i x)} }\;. $$
Hence by \eqref{ulbounds-2} and the measure invariance of the translation $\transl$,
$$\norm{ \un{n}{B}}_{L^2 (\T^d)}   \le 2 C_1 +  \norm{ \log \abs{f_B} }_{L^2 (\T^d)}   \less C_1 + C_2\;. \qquad $$
 \end{proof}

\subsection{The nearly almost invariance property}\label{nAI}
For $\GLmR$-valued cocycles, the functions $\un{n}{A} (x)$ are almost invariant under the base dynamics $\transl$, in the sense that for \emph{all} $x \in \T^d$,
$$\abs{ \un{n}{A} (x) - \un{n}{A} (\transl x) } \le C \frac{1}{n}$$

For a  cocycle that has singularities but it is not identically singular, we establish this property off of an exponentially small set of phases. It is in fact crucial to obtain a \emph{uniform} in the cocycle bound on the measure of this exceptional set of phases. 

\begin{proposition}\label{unif-a-inv}
Let $A \in \qpcmat{d}$ such that $\det [A (x) ] \not \equiv 0$. Then there are constants $\delta = \delta(A) > 0$ and $C = C(A) < \infty$, such that for any $a \in (0,1)$, if $ B \in \qpcmat{d}$ with $\normr{B-A} < \delta$, then
\begin{equation}\label{unif-a-inv-star}
\abs{ \un{n}{B} (x) - \un{n}{B} (\transl x) } \le C \frac{1}{n^{a}}
\end{equation}
holds for all $n \ge 1$ and for all $x \notin  \B_n$, where $\abs{\B_n} < e^{- n^{1-a}}$.

The exceptional set $\B_n$ may depend on the coycle $B$, but its measure does not. 
\end{proposition}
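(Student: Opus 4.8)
The plan is to compare $\Bn{n}(x)$ with $\Bn{n}(\transl x)$ by noticing that both are obtained from $\Bn{n+1}(x)$ by stripping off a single matrix factor. Indeed the cocycle identity gives
\[
\Bn{n+1}(x) = B(x+n\om)\,\Bn{n}(x) = \Bn{n}(\transl x)\,B(x),
\]
so the discrepancy between $\log\norm{\Bn{n}(x)}$ and $\log\norm{\Bn{n}(\transl x)}$ is controlled by $\log\norm{B(x)^{\pm1}}$ and $\log\norm{B(x+n\om)^{\pm1}}$, provided $B(x)$ and $B(x+n\om)$ are invertible, which holds for a.e.\ $x$ since $f_B:=\det B\not\equiv0$.

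First I would fix $\delta=\delta(A)>0$ small, using Lemma~\ref{unif-loj} applied to $f_A=\det A$ together with the local Lipschitz dependence of $B\mapsto\det B$, so that every $B$ with $\normr{B-A}<\delta$ satisfies $\normr{B}\le 2\normr{A}$, $f_B\not\equiv0$, and the {\L}ojasiewicz bound $\abs{\{x\in\T^d:\abs{f_B(x)}<t\}}<S\,t^b$ with $S=S(A)$, $b=b(A)$ independent of $B$. The next step is the key pointwise estimate: Cramer's rule gives $\norm{B(x)^{-1}}\less\normr{B}^{m-1}/\abs{f_B(x)}$ (as in the proof of Proposition~\ref{uplowbounds}), while the trivial lower bound $\norm{B(x)^{-1}}\ge\norm{B(x)}^{-1}\ge\normr{B}^{-1}$ holds as well; combining the two yields $\bigl|\log\norm{B(x)^{-1}}\bigr|\le C_1(A)+(\log\abs{f_B(x)})_-$, where $(t)_-:=\max(0,-t)$. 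The crucial point is that \emph{only the negative part} of $\log\abs{f_B}$ enters. Substituting this (and the analogous bound at $x+n\om$) into the two forms of the cocycle identity and dividing by $n$ gives, for a.e.\ $x$,
\[
\abs{\un{n}{B}(\transl x)-\un{n}{B}(x)}\le\frac{2C_1(A)}{n}+\frac{(\log\abs{f_B(x)})_-}{n}+\frac{(\log\abs{f_B(x+n\om)})_-}{n}.
\]

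Then I would fix a large constant $\theta=\theta(A)$ (depending on $S$ and $b$), set $C=C(A):=2C_1(A)+2\theta$, and define
\[
\B_n:=\{x:(\log\abs{f_B(x)})_->\theta\,n^{1-a}\}\cup\{x:(\log\abs{f_B(x+n\om)})_->\theta\,n^{1-a}\}.
\]
Off $\B_n$ the right-hand side above is at most $\tfrac{2C_1(A)}{n}+2\theta\,n^{-a}\le C\,n^{-a}$, using $n^{-1}\le n^{-a}$ for $a\in(0,1)$. For the measure, note $(\log\abs{f_B(x)})_->\theta\,n^{1-a}$ means $\abs{f_B(x)}<e^{-\theta n^{1-a}}$, so by the {\L}ojasiewicz bound and the translation invariance of Haar measure, $\abs{\B_n}<2S\,e^{-b\theta n^{1-a}}$; choosing $\theta$ large enough in terms of $S$ and $b$ only (so that $2S\,e^{-b\theta s}<e^{-s}$ for all $s\ge1$, which is possible since $s\mapsto s$ and $s\mapsto b\theta s$ compare favorably once $\theta$ is large) makes this $<e^{-n^{1-a}}$ for every $n\ge1$. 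The set $\B_n$ depends on $B$ through $f_B$, but the final bound $2S\,e^{-b\theta n^{1-a}}<e^{-n^{1-a}}$ depends only on $A$.

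The main obstacle — and precisely the reason one must assume the cocycle singular but not identically singular — is reducing the estimate to the negative part $(\log\abs{f_B})_-$ rather than the full $\abs{\log\abs{f_B}}$ (otherwise the exceptional set would also have to contain the region where $\abs{f_B}$ is large, which {\L}ojasiewicz does not control), and keeping the constants $\delta$, $C$, $S$, $b$, $\theta$ uniform over $B$ near $A$ and independent of $a$. The uniformity is exactly what Lemma~\ref{unif-loj} supplies; the rest is bookkeeping with the cocycle identity and Cramer's rule.
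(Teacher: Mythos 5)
Your proof is correct and takes essentially the same route as the paper: peel one factor off the cocycle identity to reduce the discrepancy $\abs{\un{n}{B}(x)-\un{n}{B}(\transl x)}$ to $\frac{1}{n}\log\norm{B(\cdot)^{-1}}$ at the two endpoints, bound the inverse via Cramer's rule in terms of $1/\abs{f_B}$, and use the uniform {\L}ojasiewicz inequality (Lemma~\ref{unif-loj}) to control the exceptional set. Your bookkeeping via the negative part $(\log\abs{f_B})_-$ and the auxiliary constant $\theta$ is a cosmetic repackaging of the paper's direct choice of threshold $t = C^{-1/b}e^{-n^{1-a}/b}$; if anything you are slightly more careful than the paper, which only explicitly removes the set where $\abs{f_B(x)}<t$ and tacitly leaves out the corresponding set $\transl^{-n}\{\abs{f_B}<t\}$ needed to control $\norm{B(\transl^n x)^{-1}}$, a gap your definition of $\B_n$ closes (at the cost of a factor $2$ that your choice of $\theta$ absorbs).
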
 

\begin{proof}
For any cocycle $B \in \qpcmat{d}$, let $f_B (x) := \det [B(x)] \in \analyticf{d}$. Then if $\normr{B-A} < \delta$ we have $\normr{f_B - f_A} < C \, \delta$, where $C = C(A) > 0$.

Using Lemma~\ref{unif-loj}, there are constants $\delta, C, b > 0$, all depending only on $A$, such that if $\normr{B-A} < \delta$ then
\begin{equation}\label{unif-a-inv-eq1}
\abs{ \{ x \in \T^d \colon \abs{f_B (x) } < t \} } < C t^b \quad \text{ for all } t > 0 \,.
\end{equation} 
In particular (or  by Fubini), the set $\Zm_0 (B) := \{ x\in \T^d \colon f_B (x) = 0 \}$ has zero measure, and so does $\cup_{n\ge0} \, \transl^{-n} \Zm_0 (B) =: \Zm (B)$.

Hence if $x \notin \Zm (B)$, then $B (\transl^n x)$ is invertible for all $n \ge 0$ and we can write:
\begin{align*}
& \frac{1}{n} \log \norm{\Bn{n} (x)} - \frac{1}{n} \log \norm{\Bn{n} (\transl x)}   \\
& \qquad = \frac{1}{n} \log \frac{ \norm{ B (\transl^n x)^{-1} \cdot [ B (\transl^n x) \cdot B (\transl^{n-1} x) \cdot \ldots \cdot B (\transl x)    ] \cdot B (x)  } }{ \norm{   B (\transl^n x) \cdot B (\transl^{n-1} x) \cdot \ldots \cdot B (\transl x)   } }   \\
& \qquad \le \frac{1}{n} \log [ \norm{ B (\transl^n x)^{-1} } \cdot \norm{B (x)} ] \le \frac{C}{n} + \frac{1}{n} \, \log \norm{B (\transl^n x)^{-1}  }\,.
\end{align*}
The last inequality follows from 
$$\norm{B (x)} \le \normr{B-A} + \normr{A} < \delta + \normr{A} \sim \normr{A}\,,$$
thus $\log \norm{B(x)} \le C(A)$ for all $x \in \T^d$.

Similarly,
\begin{align*}
& \frac{1}{n} \log \norm{\Bn{n} (\transl x)} - \frac{1}{n} \log \norm{\Bn{n} (x)}  \\
&\qquad  = \frac{1}{n} \log \frac{ \norm{ B (\transl^n x) \cdot [  B (\transl^{n-1} x) \cdot \ldots \cdot B (\transl x) \cdot B(x) ] \cdot B (x)^{-1} } }{ \norm{   B (\transl^{n-1} x) \cdot \ldots \cdot B (x)   } } \\
& \qquad \le \frac{1}{n} \log [ \norm{ B (\transl^n x) } \cdot \norm{B (x)^{-1}} ] \le \frac{C}{n} + \frac{1}{n} \, \log \norm{B (x)^{-1}  } \,.
\end{align*}

We conclude that if $x \notin \Zm(B)$, then
\begin{align}
& \abs{ \frac{1}{n} \log \norm{\Bn{n} (x)} - \frac{1}{n} \log \norm{\Bn{n} (\transl x)}  }  < \notag \\
&\qquad \qquad  \frac{C}{n} + \frac{1}{n} \, \log \norm{B (\transl^n x)^{-1}  } + \frac{1}{n} \, \log \norm{B (x)^{-1}  } \;. \label{unif-a-inv-eq2}
\end{align}

Therefore, in to prove~\eqref{unif-a-inv-star} we need to obtain a uniform in $B$ upper bound for $\norm{B (x)^{-1}}$, where $x$ is outside an exponentially small set. 

Upper bounds on the norm of the inverse of a matrix are obtained from lower bounds on the determinant via Cramer's rule. 
Indeed, if $x \notin \Zm(B)$ then
$$\norm{B (x)^{-1}} = \norm{{\rm adj} (B (x))} \cdot \frac{1}{\abs{\det [B(x)]}} \le C_1 \cdot \frac{1}{\abs{f_B (x)}}\,,
$$
which holds because
$$  \norm{{\rm adj} (B (x))} \less \norm{B(x)}^{m-1} \le \normr{B}^{m-1} \le C_1(A)\,.$$ 

Fix $a \in (0,1)$ and apply~\eqref{unif-a-inv-eq1} with 
$$t := \left(\frac{1}{C}\right)^{1/b} \, e^{- 1/b\, n^{1-a}} \,.$$

Then there is a set $\B_n \subset \T^d$ with $\abs{\B_n} < C t^b = e^{- n^{1-a}}$, such that if $ x \notin \B_n$ then $\abs{f_B (x)} \ge t$. 
Thus
 $$\norm{B (x)^{-1}} \le C_1 \frac{1}{t} = C_1 \, C^{1/b} \, e^{ 1/b\, n^{1-a}} = C_2 (A) \, e^{1/b\, n^{1-a}} \,,$$
 hence
 $$\frac{1}{n} \log \norm{B (x)^{-1}} \le  \frac{\log C_2}{n}  + 1/b \, \frac{n^{1-a}}{n} = C_3 (A) \, \frac{1}{n^{a}}$$
 which, combined with ~\eqref{unif-a-inv-eq2}, proves the proposition. 
\end{proof}

\subsection{The statement and proof of the LDT}\label{LDTsing-proof}
\begin{theorem}\label{LDTsing-thm}
Given  $A \in \qpcmat{d}$ with $\det [A(x) ] \not \equiv 0$ and $\om \in \rm{DC}_t$, there are constants $\delta = \delta (A) > 0$, $k_0 = k_0 (A) \in \N$, $ \ldtmeas= \ldtmeas(A, r) < \infty$, $a = a (d) > 0$ and $b = b (d) > 0$  such that if $\normr{B-A} \le \delta$ and $n \ge n_0 := t^{-2} \, k_0$, then 
\begin{equation} \label{LDTsing-star}
\abs{  \{ x \in \T^d \colon \abs{ \frac{1}{n} \log \norm{\Bn{n} (x)} - \LE{n}  (B)  } > \ldtmeas \, n^{-a} \} } < e^{- n^b}.
\end{equation}
 \end{theorem}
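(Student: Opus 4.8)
The plan is to combine the two main structural results already established: the base LDT estimate for pluri-subharmonic observables (Proposition~\ref{qBET-main}) applied to the functions $\un{n}{B}(z) = \frac1n \log \norm{\Bn{n}(z)}$, and the nearly-almost-invariance property (Proposition~\ref{unif-a-inv}). The link between them is the uniform measurements on the cocycle from Proposition~\ref{uplowbounds}. First I would fix the analytic function $f := f_A = \det[A]$, which is not identically zero by hypothesis, and apply Proposition~\ref{uplowbounds} to obtain $\delta_0 = \delta_0(A)>0$ and $C = C(A) < \infty$ so that for every $B$ with $\normr{B-A} < \delta_0$ we have $f_B = \det[B] \not\equiv 0$ with $\normr{f_B - f_A}$ small, $\norm{\log\abs{f_B}}_{L^2} \le C$, and, crucially, the two-sided bound \eqref{ulbounds-3}: $-C + \frac1n \sum_{i=0}^{n-1} \log\abs{f_B(\transl^i z)} \le \un{n}{B}(z) \le C$ for all $n\ge 1$ and $z \in \strip_r^d$.

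Next I would feed this into Proposition~\ref{qBET-main} with $f = f_A$ and the constant $C$ from above. Shrinking $\delta$ if necessary so that $\normr{f_B - f_A} < \delta(f,r)$ whenever $\normr{B-A} \le \delta$, and setting $g := f_B$, $m := n$, the hypothesis \eqref{ulb-qBET} is exactly \eqref{ulbounds-3}. Proposition~\ref{qBET-main} then yields $a = a(d)>0$, $k_0 = k_0(A,r,d) \in \N$ and $\ldtmeas = \ldtmeas(A,r,C) < \infty$ such that for all $n \ge n_0 := t^{-2} k_0$,
\begin{equation*}
\abs{ \{ x \in \T^d \colon \abs{ \tfrac1n \sum_{j=0}^{n-1} \un{n}{B}(x + j\om) - \avg{\un{n}{B}} } > \ldtmeas\, n^{-a} \} } < e^{-n^a}.
\end{equation*}
Here $\avg{\un{n}{B}} = \LE{n}(B)$ by definition. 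So what remains is to pass from the Birkhoff average $\frac1n \sum_{j=0}^{n-1} \un{n}{B}(x+j\om)$ back to $\un{n}{B}(x)$ itself — this is precisely where the nearly-almost-invariance enters.

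The final step is the standard telescoping/averaging comparison. By Proposition~\ref{unif-a-inv} (applied with a parameter, say $a' \in (0,1)$ chosen comparably to $a$, perhaps after decreasing $a$), for each $1 \le j \le n$ there is an exceptional set of measure $< e^{-n^{1-a'}}$ off which $\abs{\un{n}{B}(x) - \un{n}{B}(\transl x)} \le C n^{-a'}$; iterating $j$ times and summing, off a set of measure $< n\, e^{-n^{1-a'}}$ one gets $\abs{\un{n}{B}(x) - \un{n}{B}(\transl^j x)} \le C j n^{-a'} \le C n^{1-a'}$ — but that bound is too weak for small $j$, so instead I would directly compare $\un{n}{B}(x)$ with the average: $\abs{\un{n}{B}(x) - \frac1n\sum_{j=0}^{n-1}\un{n}{B}(\transl^j x)} \le \frac1n \sum_{j=0}^{n-1} \abs{\un{n}{B}(x) - \un{n}{B}(\transl^j x)}$, and here one should exploit that the nearly-almost-invariance bound $\abs{\un{n}{B}(x) - \un{n}{B}(\transl x)}\le Cn^{-a'}$ composes additively so that $\abs{\un{n}{B}(x) - \un{n}{B}(\transl^j x)} \le C j n^{-a'}$, giving an average $\lesssim C n^{-a'}$ off a set of measure $\le n e^{-n^{1-a'}} < e^{-n^{b}}$ for suitable $b = b(d) \in (0, 1-a')$ and $n$ large (absorbing the polynomial factor into the exponential, which forces shrinking the exponent). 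Combining this with the concentration estimate above (intersecting the two good sets, whose bad complements have measure $< e^{-n^a} + e^{-n^b} < e^{-n^{b}}$ after relabelling $b := \min$ of the two exponents suitably shrunk) yields
\begin{equation*}
\abs{ \{ x \in \T^d \colon \abs{ \un{n}{B}(x) - \LE{n}(B) } > \ldtmeas'\, n^{-a''} \} } < e^{-n^b}
\end{equation*}
for appropriate final constants, which is \eqref{LDTsing-star}. I expect the main obstacle to be purely bookkeeping: arranging the various exponents ($a$ from the base LDT, $1-a'$ from the invariance, the target $n^{-a}$ and $e^{-n^b}$) and the polynomial-vs-exponential absorption consistently, together with the choice of $k_0$ large enough (as in Proposition~\ref{qBET-main}) to absorb the $\mathscr{O}(1)$ and polynomial constants — there is no genuinely new analytic difficulty at this stage, since all the hard work (the change of coordinates, the {\L}ojasiewicz bounds, the quantitative Birkhoff theorem, and the control of $\norm{B(x)^{-1}}$) is already done in the cited results.
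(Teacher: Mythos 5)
There is a real gap in the final averaging step, and it is precisely the point you flag as ``bookkeeping.'' You apply Proposition~\ref{qBET-main} with the Birkhoff average taken over $n$ shifts, where $n$ is also the index of the iterate $\un{n}{B}$; but then the telescoping estimate you write down is simply false. The nearly-almost-invariance bound gives $\abs{\un{n}{B}(x) - \un{n}{B}(\transl^j x)} \le C\, j\, n^{-a'}$, so that
\[
\frac{1}{n}\sum_{j=0}^{n-1}\abs{\un{n}{B}(x) - \un{n}{B}(\transl^j x)}
\ \le\ \frac{C}{n}\, n^{-a'}\sum_{j=0}^{n-1} j
\ \asymp\ C\, n^{1-a'},
\]
which \emph{diverges} as $n\to\infty$ (you claim this average is $\lesssim C\,n^{-a'}$, but the factor $\frac1n\sum_{j<n} j \sim n/2$ got lost). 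The fix, and the one genuinely new organisational idea in the paper's proof, is that the number of averaging shifts must be a \emph{separate}, much smaller scale $R$. Since the hypothesis \eqref{ulb-qBET} of Proposition~\ref{qBET-main} is independent of the number of Birkhoff shifts, one can apply its conclusion with $R$ shifts instead of $n$, getting an exceptional set of measure $< e^{-cR^a}$ and deviation $\umeas R^{-a}$, while the telescoping error becomes $C' R\, n^{-a}$ off a set $\bigcup_{i<R}\transl^{-i}\B_n$ of measure $\le R\, e^{-n^{1-a}}$. Choosing $R$ so that $R\,n^{-a}$ still decays while $e^{-cR^a}$ remains stretched-exponentially small and $R\,e^{-n^{1-a}}$ stays small --- the paper takes $R = \lfloor n^{a/(a+1)}\rfloor$, giving $R n^{-a} = n^{-a^2/(a+1)}$ --- balances the three error terms and yields \eqref{LDTsing-star} with $b$ roughly $\min\{a^2/(a+1),\,1-a\}$.

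Everything else in your outline matches the paper's proof: Proposition~\ref{uplowbounds} supplies the constants $\delta, C$ and verifies the hypothesis \eqref{ulb-qBET} of Proposition~\ref{qBET-main} with $g = f_B$ and $m = n$; Proposition~\ref{unif-a-inv} gives the nearly-almost-invariance off a set of measure $< e^{-n^{1-a}}$; and the identification $\avg{\un{n}{B}} = \LE{n}_1(B)$ is by definition. The only missing ingredient is decoupling the two scales.
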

 
 \begin{proof} 
  
 Using Proposition~\ref{uplowbounds}, there are constants $\delta = \delta(A)$, $C = C (A)$, such that if $B$ is a cocycle near $A$: $\normr{B-A} < \delta$, then for all scales $n \ge 1$
  $$- C + \frac{1}{n} \sum_{i=0}^{n-1} \log \abs{f_B (\transl^i z)} \le  \un{n}{B} (z)   \le C\,.$$

We apply Proposition~\ref{qBET-main} with $f = f_A$, $C = C (A)$, so the dependence of the constants on the data will be: $\delta = \delta (f_A, r) = \delta (A)$, $a = a (d)$, $k_0 = k_0 (f_A, r, d) = k_0 (A)$ and $\umeas = \umeas (f_A, r, C) = \umeas (A)$.
  
  Since the map $\qpcmat{d} \ni B \mapsto f_B \in \analyticf{d}$ is locally Lipschitz, by possibly decreasing $\delta$, we may assume that whenever $\normr{B-A} < \delta$ we have that $\normr{f_B - f_A}$ is small enough that the pluri-subharmonic function $u (z) = \un{n}{B} (z)$ satisfies the assumption \eqref{ulb-qBET} with $g = f_B$. Hence \eqref{qBET-star} applied to our context says that for all $R \ge n_0$ we have:
  \begin{equation}\label{LDTsing-p1}
  \abs{ \{ x \in \T^d \colon \abs{ \frac{1}{R} \sum_{j=0}^{R -1} \un{n}{B}  (x + j \om) - \avg{\un{n}{B}} } > \umeas \, R^{- a} \} } < e^{- c R^a}
  \end{equation}  
 
 From the nearly almost invariance property given by Proposition~\ref{unif-a-inv}, after possibly decreasing $\delta$ and for a constant $C' = C' (A) < \infty$, we have  that if $\normr{B-A} < \delta$, then
 \begin{equation*}
\abs{ \un{n}{B} (x) - \un{n}{B} (\transl x) } \le C' \frac{1}{n^{a}}
\end{equation*}
for all $n \ge 1$ and for all $x \notin  \B_n$, where $\abs{\B_n} < e^{- n^{1-a}}$.

 Let $\Bbar_n := \cup_{i=0}^{R-1} \, \transl^{-i} \B_n$. Hence $\abs{\Bbar_n} \le R \, e^{- n^{1-a}}$ and if $x \notin \Bbar_n$ then
\begin{equation}\label{LDTsing-p2}
\abs{ \un{n}{B} (x) -  \frac{1}{R} \sum_{j=0}^{R -1} \un{n}{B}  (x + j \om)  } \le C' \frac{R}{n^{a}}
\end{equation}

Pick $R \ll n^a$, say $R = \intpart{n^{a/(a+1)}}$ and let $\ldtmeas = 2 (C' + \umeas)$. The conclusion \eqref{LDTsing-star} of the theorem then follows from \eqref{LDTsing-p1} and \eqref{LDTsing-p2} for some easily computable choice of the new parameter $a$ and the parameter $b$. 
 
 \end{proof}
 
 \begin{remark} \normalfont
 What determines {\em all} constants in the LDT estimate above, are precisely some measurements on the function $f_A (x) := \det [ A (x) ]$ and the parameter $t$ of the Diophantine condition $\rm{DC}_t$ on the frequency $\om$. 
 
 We also note that unlike in the case of random cocycles, the fiber-LDT estimate above was proven without assuming the existence of a gap between the first two Lyapunov exponents. In particular, using exterior powers, we can derive an LDT estimate for every Lyapunov exponent.
\end{remark}

\section{Deriving continuity of the Lyapunov exponents}
\label{cont_le_qp}
To prove the continuity of the LE, we use the abstract criterion given by Theorem 3.1 in Chapter 3 of our monograph~\cite{LEbook} (or Theorem 1.1 in our preprint~\cite{LEbook-chap3}).  Under the same assumptions, in Chapter 4 of~\cite{LEbook} (see also our preprint~\cite{LEbook-chap4}) we obtained abstract criteria for the continuity of the Oseledets filtration (Theorem 4.7 in~\cite{LEbook} or Theorem 3.2 in~\cite{LEbook-chap4}) and of  the Oseledets decomposition (Theorem 4.8 in~\cite{LEbook} or Theorem 3.3 in~\cite{LEbook-chap4}).

For the reader's convenience, we briefly review the relevant definitions.
We then explain how the aforementioned continuity results are applicable to the context of this chapter.

\begin{proof}\textit{(of Theorem~\ref{qp_ldt_thm:cont})}
The torus $\T^d$ together with the $\sigma$-algebra of Borel sets, the Haar measure and the translation by a rationally independent vector $\om$ form an ergodic system. 
Let $\cocycles = \bigcup_{m\ge 1} \, \cocycles_m$ be the space of analytic, not identically singular cocycles over this ergodic system defined in Section~\ref{qp_model}.

We say that a cocycle $A$ is uniformly $L^2$-bounded if there are constants $C = C(A) < \infty$ and $\delta = \delta (A) > 0$ such that
$$
\bnorm{ \frac{1}{n} \, \log \norm{\Bn{n} (\cdot)} }_{L^2 (\T^d)} < C
$$
for all cocycles $B$ that are close enough to $A$ (in the given topology on the space of cocycles)  and for all scales $n \ge 1$.

Estimate~\eqref{ulbounds-4} in proposition~\ref{uplowbounds} shows that all cocycles in $\cocycles$  are uniformly $L^2$-bounded.

Given a cocycle $A \in \cocycles_m$ and $N \in \N$, note that the sets $\{ x \in \T^d \colon \norm{\An{n} (x) } \le c \}$ or $\{ x \in \T^d \colon \norm{\An{n} (x) } \ge c \}$
for some $1 \le n \le N$ and $c > 0$ are closed Jordan measurable, so the algebra $\mathscr{F}_N (A)$ generated by them consists only of Jordan measurable sets. 

We say that a set $\Xi$ of observables and a cocycle $A \in \cocycles$ are compatible, if for any $N \in \N$, $F \in \mathscr{F}_N (A)$, $\ep > 0$, there is $\xi \in \Xi$ such that $\ind_F \le \xi$ and $\int_{\T^d} \xi \, d x \le \abs{F} + \ep$.

Let $\Xi := \mathscr{C}_0 (\T^d)$ be the set of all {\em continuous} observables $\xi \colon \T^d \to \R$. 
By the  regularity of the Borel measure, there is an open set $U \supseteq \overline{F}$ such that $\abs{U} \le \abs{F} + \ep$.
By Urysohn's lemma, there is a continuous function $\xi \in \Xi$ such that $0 \le \xi \le 1$, $\xi \equiv 1$ on $\overline{F}$ and $\xi \equiv 0$ on $U\comp$. Then
$$\ind_F \le \xi \quad \text{ and }\  \int_{\T^d} \xi \, d x \le \abs{U} \le \abs{F} + \ep\,,$$
which shows that $\Xi$ is compatible with every cocycle in $\cocycles$, a property we call the compatibility condition.

We call deviation size function any non-increasing map $\devf \colon (0, \infty) \to (0, \infty)$, and deviation measure function any sufficiently fast decreasing function $\mesf \colon (0, \infty) \to (0, \infty)$. A triplet $(\nzerobar, \devf, \mesf)$, where $\nzerobar \in \N$ and $\devf, \mesf$ are deviation size  or measure functions is called an LDT parameter, while any set $\params$ containing such triplets is called a set of LDT parameters.

We say than an observable $\xi \in \Xi$ satisfies a base-LDT estimate with parameter space  $\params$, if for every $\ep > 0$ there is an LDT parameter $(\nzerobar, \devf, \mesf)\in\params$ such that for all $n \ge \nzerobar$ we have $\devf (n) \le \ep$ and 
$$
\abs{ \{ x \in \T^d \colon \frac{1}{n} \, \sum_{j=0}^{n-1} \xi (\transl^j x) - \int_X \xi \, d x > \devf (n) \}  } < \mesf (n)\;.
$$

A torus translation by a rationally independent vector $\om$ is {\em uniquely ergodic}, hence the convergence in Birkhoff's ergodic theorem is {\em uniform} for {\em continuous} observables.
This shows that the base-LDT estimate holds trivially for $\xi \in \Xi$, with $\devf \equiv \ep$ and $\mesf \equiv 0$. 

Finally, a cocycle $A \in \cocycles_m$ is said to satisfy a uniform fiber-LDT with parameter space $\params$, if for every $\ep > 0$ there are $\delta > 0$ and an LDT parameter $(\nzerobar, \devf, \mesf)\in\params$ which may only depend upon $A$ and $\ep$, such that for any $B \in \cocycles_m$ with $\dist (B, A) < \delta$ and for all $n \ge \nzerobar$, we have $\devf (n) \le \ep$ and
$$
\abs{ \{ x \in \T^d \colon \abs{ \frac{1}{n} \, \log \norm{ \Bn{n} (x) } - \LE{n}_1 (B) } > \devf (n)  \}  }< \mesf (n)\;.
$$

Theorem~\ref{LDTsing-thm} shows that a uniform fiber-LDT estimate holds 
 for all cocycles in $\cocycles$,
with the parameter space $\params$
being the set of all triplets $(\nzerobar, \devf, \mesf)$ with $\nzerobar \in \N, \, \devf (t) \equiv \ldtmeas \, t^{- a}$ and  $\mesf (t) \equiv e^{ t^b}$ where the constants $\nzerobar, a, b$ are explicitly described in terms of some measurements of $A$ and the Diophantine condition on $\om$.

Theorem 1.1 in~\cite{LEbook-chap3} says that given an ergodic system, a space of cocycles, a set of observables and a set of LDT parameters, if the compatibility condition, the uniform $L^2$-boundedness, the base-LDT and the uniform fiber-LDT estimates hold, then all Lyapunov exponents are continuous. Moreover, if $A \in \cocycles_m$ has a Lyapunov spectrum gap, i.e. for some $1 \le k \le m$, $L_k (A) > L_{k+1} (A)$,  then locally near $A$, the map $B \mapsto \Lambda_k (B) = (L_1 + \ldots + L_k) (B)$ has a modulus of continuity 
$\omega (h) :=  [ \mesf \ (c \, \log \frac{1}{h}) ]^{1/2}$, for some $c > 0$ and some deviation measure function $\mesf$ corresponding to an LDT parameter in  $\params$. 

Given that the deviation measure functions obtained in this chapter have the form $\mesf (t) \equiv e^{- t^b}$, the modulus of continuity is $\omega (h) = e^{- c \, [ \log (1/h) ]^b}$, i.e. we obtain weak-H\"older continuity.

The statements on the Oseledets filtration and decomposition follow directly from the corresponding general criteria. 
\end{proof}

\section{Refinements in the one-variable case}
\label{refinements_1var}
Let us consider now the case of a {\em one}-variable torus translation by a frequency $\om$ that satisfies a {\em stronger} Diophantine condition, namely:
\begin{equation}\label{strongDC}
\norm{k \om} \ge \frac{t}{\abs{k} \, (\log \, \abs{k})^{1+}}
\end{equation}
for some $t > 0$ and for all $k \in \Z \setminus \{0\}$.

Using the already established fiber-LDT estimate in Theorem~\ref{LDTsing-thm} and the corresponding continuity result for Lyapunov exponents, we derive a {\em sharper} fiber-LDT estimate under the additional assumption that the cocycle $A$ has the property $L_1 (A) > L_2 (A)$. This in turn leads to a {\em stronger} modulus of continuity of the LE, the Oseledets filtration and the Oseledets decomposition. 

We note that our argument for proving this sharper fiber-LDT does require the gap condition $L_1 (A) > L_2 (A)$, since it depends essentially on the Avalanche Principle proven in Chapter 2 of our monograph~\cite{LEbook}, see also~\cite{LEbook-chap2}. In fact, the argument requires the full strength of the continuity results in Chapter 3 of~\cite{LEbook} (or of ~\cite{LEbook-chap3}), specifically the finite scale uniform estimates in Lemma 3.4 in~\cite{LEbook} (or Lemma 5.1 in~\cite{LEbook-chap3}).

We remind the reader the particular estimate in the general AP which we use here (see Proposition 2.37 in~\cite{LEbook} or  Proposition 4.2 in~\cite{LEbook-chap2}). We recall the notation $\rgap (g) = \frac{s_1 (g)}{s_2(g)} \in [1, \infty]$ for the ratio of the first two singular values of a matrix $g \in \Mat (m, \R)$. 


\begin{proposition} \label{AP-general}
There exists $c>0$ such that
 given $0<\varepsilon<1$,  $0<\varkappa\leq c\,\varepsilon^ 2$ 
and  \,  $g_0, g_1,\ldots, g_{n-1}\in\gl(m,\R)$, \,
 if
\begin{align*}
 & \rgap (g_i) >  \frac{1}{\varkappa} &  \text{for all }  \  0 \le i \le n-1  
\\
 & \frac{\norm{ g_i \cdot g_{i-1} }}{\norm{g_i}  \, \norm{ g_{i-1}}}  >  \varepsilon  & 
 \text{for all }     \ 1 \le i \le n-1  
\end{align*}
then  
\begin{align*} 
&  \sabs{ \log \norm{ g^{(n)} } + \sum_{i=1}^{n-2} \log \norm{g_i} -  \sum_{i=1}^{n-1} \log \norm{ g_i \cdot g_{i-1}} } \less n \cdot \frac{\varkappa}{\varepsilon^2} \;.
\end{align*}
 \end{proposition}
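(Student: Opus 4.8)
This Avalanche Principle estimate is established in Chapter~2 of~\cite{LEbook} (Proposition~2.37); here is the plan of its proof.

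\textbf{Reduction and setup.} First I would normalize: replacing each $g_i$ by $g_i/\norm{g_i}$ changes neither hypothesis (both $\rgap(g_i)$ and $\norm{g_ig_{i-1}}/(\norm{g_i}\norm{g_{i-1}})$ are scale invariant), and a short bookkeeping shows the quantity $\log\norm{g^{(n)}}+\sum_{i=1}^{n-2}\log\norm{g_i}-\sum_{i=1}^{n-1}\log\norm{g_ig_{i-1}}$ is unchanged by such rescalings. So one may assume $\norm{g_i}=1$ for all $i$, $s_2(g_i)<\varkappa$, $\norm{g_ig_{i-1}}>\varepsilon$, and must prove $\abs{\log\norm{g^{(n)}}-\sum_{i=1}^{n-1}\log\norm{g_ig_{i-1}}}\less n\varkappa/\varepsilon^2$. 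For each $i$ write the best rank-one approximation $g_i=\hatu_i\langle\hatv_i,\cdot\rangle+E_i$, with $\hatu_i,\hatv_i$ unit top singular vectors and $\norm{E_i}=s_2(g_i)<\varkappa$; expanding $g_ig_{i-1}$ then gives $\norm{g_ig_{i-1}}=b_i+O(\varkappa)$ where $b_i:=\abs{\langle\hatu_{i-1},\hatv_i\rangle}$, so the hypotheses (together with $\varkappa\le c\,\varepsilon^2$) force $b_i\in(\varepsilon/2,1]$. Morally $g^{(n)}$ behaves like the rank-one matrix $\bigl(\prod_{j=1}^{n-1}b_j\bigr)\,\hatu_{n-1}\langle\hatv_0,\cdot\rangle$.

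\textbf{Induction on partial products.} I would prove by induction on $1\le k\le n$ that $g^{(k)}:=g_{k-1}\cdots g_0$ has: (i) unit top singular vectors within projective distance $O(\varkappa/\varepsilon)$ of $\hatu_{k-1}$ and $\hatv_0$; (ii) $s_2(g^{(k)})/s_1(g^{(k)})\less\varkappa/\varepsilon$; and (iii) $\abs{\log s_1(g^{(k)})-\sum_{j=1}^{k-1}\log b_j}\less k\,\varkappa/\varepsilon^2$. For the step $k\to k+1$ one applies $g_k$: since the top direction of $g^{(k)}$ is near $\hatu_{k-1}$, which is $b_k\more\varepsilon$–separated from the contracted hyperplane $\hatv_k^{\perp}$ of $g_k$, the strong projective contraction of $g_k$ toward $[\hatu_k]$ (by a factor $\less\varkappa/b_k\less\varkappa/\varepsilon$) yields (i) at level $k+1$, while the leading singular value is multiplied by $b_k$ up to a relative error $O(\varkappa/\varepsilon)$, giving (iii) at level $k+1$ with the additional additive $O(\varkappa/\varepsilon^2)$. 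The point requiring care is (ii): iterating the error directly through $g_k$ would cost a factor $1/b_k$ per step and compound geometrically; instead I use submultiplicativity of the second singular value, $s_2(g^{(k)})=s_2\bigl((g_{k-1}\cdots g_1)\,g_0\bigr)\le\norm{g_{k-1}\cdots g_1}\,s_2(g_0)<\varkappa\,s_1(g_{k-1}\cdots g_1)$, and note $s_1(g_{k-1}\cdots g_1)\asymp\prod_{j=2}^{k-1}b_j$ by the same induction applied to the shifted block $g_1,\dots,g_{k-1}$; dividing by $s_1(g^{(k)})\asymp\prod_{j=1}^{k-1}b_j$, the common factor cancels and $s_2/s_1\less\varkappa/\varepsilon$ holds uniformly in $k$.

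\textbf{Conclusion, and the main obstacle.} Taking $k=n$ in (iii) gives $\log\norm{g^{(n)}}=\sum_{j=1}^{n-1}\log b_j+O(n\varkappa/\varepsilon^2)$, and replacing each $\log b_j$ by $\log\norm{g_jg_{j-1}}$ (legitimate since $\norm{g_jg_{j-1}}=b_j+O(\varkappa)$ with $b_j\more\varepsilon$, an error $O(\varkappa/\varepsilon)$ per term) and then undoing the normalization yields the claimed bound. The essential difficulty throughout is controlling the accumulation of errors over the $n$ steps — above all, guaranteeing that the spectral gap and the directional estimates of the partial products $g^{(k)}$ persist uniformly in $k$ rather than degrading by a power of $\varepsilon$ at each stage. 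This is precisely what the quadratic smallness $\varkappa\le c\,\varepsilon^2$ buys, and it is made to work via the submultiplicativity trick for $s_2$ together with the self-similarity of the hypotheses under shifting the block $g_0,\dots,g_{n-1}$.
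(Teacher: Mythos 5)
This paper does not prove Proposition~\ref{AP-general}; it quotes it from Chapter~2 of~\cite{LEbook} (and~\cite{LEbook-chap2}) and uses it as a black box in Section~\ref{refinements_1var}, so there is no in-text proof to compare against. Your sketch reproduces the standard skeleton of an Avalanche Principle argument: reduce by scale-invariance to $\norm{g_i}=1$; use $\rgap(g_i)>1/\varkappa$ to write $g_i$ as a rank-one operator $\hatu_i\langle\hatv_i,\cdot\rangle$ up to an $O(\varkappa)$ error, so that $\norm{g_ig_{i-1}}=b_i+O(\varkappa)$ with $b_i=\abs{\langle\hatu_{i-1},\hatv_i\rangle}\more\varepsilon$; then track the leading singular direction and singular-value ratio of the partial products. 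That architecture, and the base of the induction, are correct.

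There is, however, a genuine gap in your justification of claim (ii), the uniform bound $s_2(g^{(k)})/s_1(g^{(k)})\less\varkappa/\varepsilon$. You estimate $s_1(g_{k-1}\cdots g_1)/s_1(g^{(k)})$ by comparing (iii) for the shifted block with (iii) for the full block and asserting that ``the common factor cancels.'' But (iii) determines each of these quantities only up to a multiplicative factor $e^{O(k\varkappa/\varepsilon^2)}$, and the errors coming from the two different chains of one-step approximations (one starting at $g_0$, the other at $g_1$) need not cancel. The hypotheses allow $k\varkappa/\varepsilon^2$ to be of order $1$ or larger---the target error $n\varkappa/\varepsilon^2$ is not assumed small---so the ratio you obtain is $e^{O(k\varkappa/\varepsilon^2)}/b_1$, which is not $\less 1/\varepsilon$ uniformly in $k$, and (ii) does not follow. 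The correct route is a \emph{one-step} comparison that avoids (iii) entirely: apply (i) and (ii) (not (iii)) to the shifted block $g_1,\dots,g_{k-1}$ to see that its top input direction is $O(\varkappa/\varepsilon)$-close to $\hatv_1$ and its relative spectral gap is $\less\varkappa/\varepsilon$; since $\hatu_0$ is $b_1\more\varepsilon$-aligned with $\hatv_1$, this yields $\norm{(g_{k-1}\cdots g_1)g_0}\more\varepsilon\,\norm{g_{k-1}\cdots g_1}$ directly, i.e. $s_1(g_{k-1}\cdots g_1)/s_1(g^{(k)})\less 1/\varepsilon$, and combining with $s_2(g^{(k)})\le s_2(g_0)\norm{g_{k-1}\cdots g_1}<\varkappa\,s_1(g_{k-1}\cdots g_1)$ gives (ii) with a bound independent of $k$. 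With this substitution the rest of your plan (the induction giving (iii), replacing $\log b_j$ by $\log\norm{g_jg_{j-1}}$ at cost $O(\varkappa/\varepsilon)$ per term, and undoing the normalization) is sound.
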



The argument we are about to present works only for one-variable translations because it requires the following  {\em sharp} version of the quantitative Birkhoff ergodic Theorem~\ref{qBET-thm}, and this sharp version is only available in the one-variable setting. 

\begin{proposition}\label{sharp-qBET}
Let $\om \in \T$ satisfying the strong Diophantine condition \eqref{strongDC} for some $t > 0$ and let $u \colon \strip_r \to [- \infty, \infty)$ be a subharmonic function satisfying the bound
\begin{equation*}
\sup_{z \in \strip_r} u (z) +    \norml{u}{\Lp{2}}   \le C\,.
\end{equation*}

There are constants $c_1, c_2 > 0$ that depend only on $C$ and $r$ and there is $n_0 \in \N$ that depends on $t$ such that for all $\ep > 0$ and $n \ge n_0$ we have:
\begin{equation*}
\abs{ \{ x \in \T \colon \abs{ \frac{1}{n} \sum_{j=0}^{n-1} \, u (x + j \om) - \avg{u} } > \ep \} } < e^{- c_1 \ep n + c_2 (\log n)^4}.
\end{equation*}
\end{proposition}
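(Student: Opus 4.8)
The plan is to control the upper and lower deviations of $v_n(x):=\frac1n\sum_{j=0}^{n-1}u(x+j\om)$ about $\avg{v_n}=\avg u$ separately, after splitting off the singular part of $u$. Since $\norml u{\Lp2}\le C$ we have $\sabs{\avg u}\le C$; moreover the asserted bound is $\ge1$ once $\ep\le\tfrac{c_2}{c_1}\tfrac{(\log n)^4}{n}$, so we may assume $\ep n\ge\tfrac{c_2}{c_1}(\log n)^4$. By Theorem~\ref{Riesz-rep-thm}, on $\strip_{r/2}$ write
\begin{equation*}
u(z)=\int_{\strip_{r/2}}\log\sabs{z-\zeta}\,d\mu(\zeta)+h(z),
\end{equation*}
where, by the hypothesis and Remark~\ref{decays-sh:remark} (see also Lemma~\ref{measurement-sh:lemma}), $\mu(\strip_{r/2})+\norm h_{L^\infty(\strip_{r/4})}+\norml{\partial_x h}{\Lp\infty}\less C$. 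Averaging over the orbit, $v_n=v_n^p+v_n^h$ with $v_n^h:=\frac1n\sum_j h(\cdot+j\om)$. As $h$ is harmonic on $\strip_{r/4}$ with the above $L^\infty$-bound, its Fourier coefficients decay geometrically, $\sabs{\hat h(k)}\less C\rho^{\sabs k}$ for some $\rho=\rho(r)<1$; combined with $\sabs{K_n(k\om)}\le\frac1{n\norm{k\om}}$ and the lower bounds on $\norm{k\om}$ from \eqref{strongDC} this gives $\norml{v_n^h-\avg h}{\Lp\infty}\less\frac{C}{n\,t}<\frac\ep3$ for all $\ep$ in the relevant range, once $n\ge n_0(t)$. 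It remains to analyze $v_n^p(x)=\int_{\strip_{r/2}}\Psi_\zeta(x)\,d\mu(\zeta)$, where $\Psi_\zeta(x):=\frac1n\sum_{j=0}^{n-1}\log\sabs{e(x+j\om)-\zeta}$; by Jensen's formula $\avg{\Psi_\zeta}=\log^+\sabs\zeta=:\ell(\zeta)$, so $\avg{v_n^p}=\int\ell\,d\mu$.

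The crux, and the step I expect to be the main obstacle, is a \emph{deterministic} upper bound with no exceptional set: for every $x\in\T$ and every $\zeta$ with $1-\tfrac r2\le\sabs\zeta\le1+\tfrac r2$,
\begin{equation*}
\Psi_\zeta(x)\ \le\ \ell(\zeta)+\frac{C_r(\log n)^4}{n}\qquad(n\ge n_0(t)),
\end{equation*}
equivalently $\max_{\sabs w=1}\bigl|\prod_{j=0}^{n-1}(w-\zeta e(-j\om))\bigr|\le e^{n\ell(\zeta)+C_r(\log n)^4}$. Here the one-variable setting and the strengthened condition \eqref{strongDC} are essential. One proves it by organizing the $n$ factors along the continued fraction expansion of $\om$: \eqref{strongDC} applied to the denominators $q_m$ forces the partial quotients to satisfy $a_{m+1}\less(\log q_m)^{1+}/t$, so $\log q_m$ grows at least linearly in $m$ and there are only $\less\log n$ relevant scales; estimating $\prod_j 2\sabs{\sin\pi(x+j\om)}$ block by block (three-distance theorem) the accumulated error is $\mathcal O((\log n)^4)$. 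This is the sharp quantitative Birkhoff estimate for $\log\sabs{e(x)-\zeta}$, which has no analogue for $d>1$, and I would refer to the small-denominators analysis in \cite{GS-Holder}. Granting it, $v_n^p(x)-\avg{v_n^p}=\int[\Psi_\zeta(x)-\ell(\zeta)]\,d\mu(\zeta)\le\frac{C_r(\log n)^4}{n}\mu(\strip_{r/2})\less\frac{C(\log n)^4}{n}$ for \emph{every} $x$; with the $v_n^h$-bound this gives $v_n(x)-\avg u\le C^\ast(\log n)^4/n$ for all $x$, so the upper-deviation set $\{x:v_n(x)-\avg u>\ep\}$ is empty whenever $\ep>C^\ast(\log n)^4/n$ — which, for $c_2\ge c_1C^\ast$, covers the whole relevant range.

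For the lower deviation I would use an exponential Chebyshev inequality. By the $v_n^h$-bound, $\{x:v_n(x)-\avg u<-\ep\}\subset\{x:\int[\ell(\zeta)-\Psi_\zeta(x)]\,d\mu(\zeta)>\tfrac\ep2\}$, and here the integrand is $\le C_r(\log n)^4/n$ everywhere (previous step) but may be large positive near the root orbit $x_j:=\arg\zeta-j\om$. Write $\mu=M\nu$ with $M=\mu(\strip_{r/2})\less C$ and $\nu$ a probability measure, and fix $\lambda:=c_1 n$ where $c_1=c_1(C,r)$ is small enough that $p:=\lambda M/n=c_1 M<1$. With $Q_\zeta(w):=\prod_{j=0}^{n-1}(w-\zeta e(-j\om))$ and $G_\zeta(x):=n\ell(\zeta)-\log\sabs{Q_\zeta(e(x))}=\sum_j\bigl(\ell(\zeta)-\log\sabs{e(x+j\om)-\zeta}\bigr)$, Jensen's inequality in $\zeta$ yields, since $\lambda M[\ell(\zeta)-\Psi_\zeta(x)]=pG_\zeta(x)$,
\begin{equation*}
\bigl|\{x:{\textstyle\int}[\ell-\Psi_\zeta]\,d\mu>\tfrac\ep2\}\bigr|\ \le\ e^{-\lambda\ep/2}\int_{\strip_{r/2}}\Bigl(\int_\T e^{\,p\,G_\zeta(x)}\,dx\Bigr)\,d\nu(\zeta).
\end{equation*}
Now $\avg{G_\zeta}=0$, $G_\zeta\ge-C_r(\log n)^4$ by the deterministic bound, and $G_\zeta$ has logarithmic spikes at the $x_j$: writing $\delta_n:=c\,t/(n(\log n)^{1+})$ for one tenth of the minimal root gap, one has $G_\zeta(x)\le\log\tfrac{C'}{\sabs{x-x_j}}+C_r(\log n)^4$ on $\sabs{x-x_j}<\delta_n$, and $G_\zeta(x)\le C_r(\log n)^4$ off the union of these neighbourhoods — in both estimates the $\sum_{l\ne j}$-part is controlled by a Koksma/discrepancy bound (the discrepancy of $\{m\om\}_{m\le n}$ is $\less(\log n)^{2+}/(nt)$ under \eqref{strongDC}, and the truncated variation of $t\mapsto\log\sabs{e(t)-\sabs\zeta}$ is $\less\log n$) together with the classical Herman bound $\sum_{1\le m\le n}\sabs{\log\sabs{2\sin\pi m\om}}\less(\log n)^2$, and one checks that the power $4$ absorbs all these once $n\ge n_0(t)$. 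Hence, uniformly in $\zeta$,
\begin{equation*}
\int_\T e^{pG_\zeta(x)}\,dx\ \le\ e^{pC_r(\log n)^4}\Bigl(1+\sum_{j=0}^{n-1}\int_{\sabs{x-x_j}<\delta_n}\sabs{x-x_j}^{-p}\,dx\Bigr)\ \le\ e^{pC_r(\log n)^4}\bigl(1+C_p\,n\,\delta_n^{1-p}\bigr)\ \le\ e^{C_r'(\log n)^4}
\end{equation*}
for $n\ge n_0$, because $n\delta_n^{1-p}\less n^p$. Feeding this back, $\bigl|\{x:v_n(x)-\avg u<-\ep\}\bigr|\le e^{-(c_1/2)\ep n+C_r'(\log n)^4}$. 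Combining the (empty) upper-deviation set, this lower-deviation bound and the harmonic estimate proves the proposition, with $c_1$ replaced by $c_1/2$ and $c_2:=\max\{C_r',\,c_1C^\ast\}$, both depending only on $C$ and $r$, and $n_0$ depending on $t$. The single genuine difficulty is the deterministic sup-bound of the second paragraph — the small-denominators input — which is exactly what ties the argument to $d=1$.
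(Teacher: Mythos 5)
Your proof correctly reconstructs the Goldstein--Schlag argument (Theorem 3.8 of \cite{GS-Holder}) that the paper cites, including the Riesz decomposition into potential and harmonic parts, the deterministic small-denominators sup-bound on the Birkhoff sums of the logarithmic potential (the Sudler-product estimate under \eqref{strongDC}), and the Cartan-type exponential Chebyshev estimate for the lower deviation; the extension from bounded $u$ to the present hypothesis via the bounds on the Riesz mass and harmonic part (Lemma~\ref{measurement-sh:lemma} and Remark~\ref{decays-sh:remark}) is exactly what the paper's one-line remark prescribes. The paper's own proof is simply the citation plus that remark, so your route is the same one, carried out in full.
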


This result was proven in \cite{GS-Holder} (see Theorem 3.8) for {\em bounded} subharmonic functions. It remains valid in our setting, by the same argument following Lemma~\ref{decay-BMO-1var}.

We can now phrase the sharper fiber-LDT in the one-variable case.

\begin{theorem}\label{sharp-LDT-1var}
Let $A \in \cocyclesone$ with $\det [A(x)] \not\equiv 0$, and let $\om \in \T$ be a frequency satisfying the strong Diophantine condition \eqref{strongDC}. 

Assume that $L_1 (A) > L_2 (A)$ and let $\ep > 0$. There are $\delta = \delta (A) > 0$, $p = p(A) < \infty$, $\bar{n}_1 = \bar{n}_1 (A, \om, \ep) \in \N $ such that for all $B \in \cocyclesone$ with $\norm{B-A}_r < \delta$ and for all $n \ge \bar{n}_1$ we have:
\begin{equation*}
\sabs{ \{ x \in \T \colon \abs{ \frac{1}{n} \, \log \, \norm{\Bn{n} (x) } - \LE{n}_1 (B) } > \ep \} } < e^{- \ep \, n / (\log n)^p}.
\end{equation*}
\end{theorem}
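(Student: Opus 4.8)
The plan is to \emph{bootstrap} the sharper fiber-LDT out of the coarse fiber-LDT of Theorem~\ref{LDTsing-thm}, by feeding it --- through the Avalanche Principle of Chapter~2 of~\cite{LEbook}, whose quantitative core is Proposition~\ref{AP-general} --- into the \emph{sharp} quantitative Birkhoff estimate of Proposition~\ref{sharp-qBET}. The gap hypothesis $L_1(A)>L_2(A)$ is exactly what makes the AP applicable at a finite scale; the continuity results of Theorem~\ref{qp_ldt_thm:cont}, together with the finite scale uniform estimates of Lemma~3.4 in~\cite{LEbook}, are what make the relevant scale-$N$ measurements --- the size of $\norm{\Bn N(x)}$, the singular value gap $\rgap(\Bn N(x))$, and the distance from $\LE N_1(B)$ to $L_1(B)$ --- hold \emph{uniformly} for all $B$ with $\norm{B-A}_r<\delta$.

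I would start with routine reductions: by continuity of the Lyapunov exponents, after shrinking $\delta$ one may assume $L_1(B)-L_2(B)>2\gamma$ for all $B$ near $A$, where $\gamma:=\tfrac14(L_1(A)-L_2(A))>0$; since the exceptional set in the conclusion only shrinks as $\ep$ grows, one may also assume $\ep$ small relative to $\gamma$. Then I fix a large intermediate scale $N=N(A,\om,\ep)$ such that: (i) applying Theorem~\ref{LDTsing-thm} at scales $N$ and $2N$, both to $A$ and to its second exterior power (cf.\ the remark after that theorem), yields a set $\mathcal E_N\subset\T$ of measure $<e^{-N^b}$ off which $\tfrac1N\log\rgap(\Bn N(x))>\gamma$ while $\tfrac1N\log\norm{\Bn N(x)}$ and $\tfrac1{2N}\log\norm{\Bn{2N}(x)}$ lie within $O(N^{-a})$ of $\LE N_1(B)$, uniformly in $B$ near $A$ --- this uniformity being supplied by Lemma~3.4 of~\cite{LEbook}; (ii) $\abs{\LE N_1(B)-L_1(B)}$ and $\abs{\LE{2N}_1(B)-L_1(B)}$ are $<\ep/10$, uniformly in $B$ near $A$; and (iii) the $N$-dependent error terms appearing below are each $<\ep/10$.

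The central step is the AP telescoping. Write $n=mN+s$, $0\le s<N$, and consider for $B$ near $A$ and a phase $x$ the chain $g_i:=\Bn N(x+iN\om)$ for $0\le i\le m-2$ and $g_{m-1}:=\Bn{N+s}(x+(m-1)N\om)$, so $g_{m-1}\cdots g_0=\Bn n(x)$. Every block $g_i$ with $x+iN\om\notin\mathcal E_N$ satisfies the hypotheses of Proposition~\ref{AP-general} with $\varkappa:=e^{-N\gamma}$ and $\varepsilon:=e^{-N^{1-a}}$ (and $\varkappa\le c\,\varepsilon^2$ for $N$ large), the gap condition coming from $\tfrac1N\log\rgap>\gamma$ and the non-collapsing one because $\tfrac1{2N}\log\norm{\Bn{2N}}$ and $\tfrac1N\log\norm{\Bn N}$ are all within $O(N^{-a})$ of the common value $\LE N_1(B)$. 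The key point is that the \emph{exceptional} blocks --- the $i$ with $x+iN\om\in\mathcal E_N$ --- have density $<e^{-N^b}$ \emph{for every} $x$ once $m$ is large, since $\ind_{\mathcal E_N}$ is dominated by a continuous function of integral $<2e^{-N^b}$ whose Birkhoff averages along the uniquely ergodic translation by $N\om$ converge uniformly. Invoking the robust form of the AP from Chapter~2 of~\cite{LEbook} --- merging each maximal run of exceptional blocks into a neighbouring good super-block, whose gap persists by the transversality encoded in the surrounding good blocks --- one gets
\[
\Bigl|\log\norm{\Bn n(x)}+\sum_{i=1}^{m-2}\log\norm{g_i}-\sum_{i=1}^{m-1}\log\norm{g_ig_{i-1}}\Bigr|\ \less\ m\,\frac{\varkappa}{\varepsilon^2}+(\#\,\text{exceptional blocks})\cdot C(A)
\]
for all $x$ once $m$ exceeds a threshold depending on $N$; dividing by $n$ makes both terms $<\ep/5$ since $\varkappa/(N\varepsilon^2)=e^{-N\gamma+2N^{1-a}}/N\to0$ and $e^{-N^b}C(A)/N\to0$ as $N\to\infty$, the endpoint block costing only $O(N/n)$. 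Dividing the telescoped identity by $n$ then rewrites $\tfrac1n\log\norm{\Bn n(x)}$, up to $<\ep/2$, as $\tfrac2m\sum_i u_{2N}(x+iN\om)-\tfrac1m\sum_i u_N(x+iN\om)$, where $u_N(y):=\tfrac1N\log\norm{\Bn N(y)}$ and $u_{2N}(y):=\tfrac1{2N}\log\norm{\Bn{2N}(y)}$ are subharmonic on $\strip_r$ and satisfy $\sup_{\strip_r}u_N+\norml{u_N}{\Lp2}\le C(A)$ uniformly in $N$ and in $B$ near $A$, by Proposition~\ref{uplowbounds}. Since $\om$ obeys the strong Diophantine condition~\eqref{strongDC}, so does $N\om$ (with constant $\asymp t/N$, $N$ being fixed), so Proposition~\ref{sharp-qBET} applies to $u_N$ and to $u_{2N}$: for $m\asymp n/N$ large these two Birkhoff averages lie within $\ep/10$ of $\avg{u_N}=\LE N_1(B)$, resp.\ $\avg{u_{2N}}=\LE{2N}_1(B)$, off a set of measure $<e^{-c_1\ep m+c_2(\log m)^4}$. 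Finally, by (ii) the number $2\LE{2N}_1(B)-\LE N_1(B)$ is within $\ep/2$ of $L_1(B)$, hence within $\ep$ of $\LE n_1(B)$ for $n$ large; assembling the error bounds and using $m\asymp n/N$, the exceptional measure is $<e^{-c_1\ep m+c_2(\log m)^4}<e^{-\ep n/(\log n)^p}$ for $n\ge\bar n_1(A,\om,\ep)$ and a suitable $p=p(A)$, which is the assertion.

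The hard part --- and the reason the \emph{general} AP of Chapter~2 and the uniform finite-scale estimates of Lemma~3.4, not Proposition~\ref{AP-general} alone, are needed --- is precisely the handling of the exceptional scale-$N$ blocks: the gap cannot be enforced on \emph{every} block along the orbit of a fixed $x$, only off a set that is exponentially small in $N$ but not in $n$, so the naive union bound over the $\asymp n/N$ blocks fails. The remedy is the uniform-in-$x$ density bound on exceptional blocks coming from unique ergodicity, combined with merging runs of exceptional blocks into super-blocks whose spectral gap is recovered from the transversality carried by the surrounding good blocks --- which is exactly what the robust AP and Lemma~3.4 deliver. A secondary subtlety is that $u_N$ is only uniformly $L^2$-bounded, not $L^\infty$, so the density bound and Cauchy--Schwarz are again needed to prevent the exceptional and merged blocks from corrupting the Birkhoff averages.
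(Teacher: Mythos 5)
Your proposal follows the same high-level plan as the paper (coarse LDT $\to$ Avalanche Principle $\to$ sharp Birkhoff), but it diverges at the crucial step, and the divergence opens a genuine gap. You fix the intermediate scale $N=N(A,\om,\ep)$ and let $n$ grow indefinitely, so the number of $N$-blocks is $m\asymp n/N$. You correctly observe that the naive union bound $m\cdot e^{-N^b}$ over the exceptional $N$-scale phases blows up as $m\to\infty$, and you propose to repair this with (a) a uniform-in-$x$ density bound on exceptional blocks via unique ergodicity, and (b) a ``robust form of the AP'' that merges each maximal run of exceptional blocks into a neighbouring good super-block ``whose gap persists by the transversality encoded in the surrounding good blocks.'' Step (b) is where the argument fails: no such robust AP is cited or proved, and it cannot hold at this level of generality --- multiplying a matrix with a large singular value gap by one with no gap (e.g.\ its inverse) can destroy the gap entirely, so the super-block hypothesis of Proposition~\ref{AP-general} need not hold for the merged block. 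The paper does not use, and does not need, any version of a block-merging AP.

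What the paper actually does is to make the large scale \emph{exponential} in the intermediate one: $n_1\asymp e^{n_0^{b_1}}$ with $0<b_1<b$, so that the number of blocks is $n\asymp n_1/n_0\asymp e^{n_0^{b_1}}/n_0$ and the plain union bound over the $\lesssim n$ exceptional $n_0$-scale sets gives total measure $\lesssim n\,e^{-n_0^b}\le e^{-n_0^b/2}$. This is small in $n_0$ but only of order $e^{-c(\log n_1)^{b/b_1}}$ in $n_1$ --- far from exponentially small in $n_1$ --- which is exactly why the paper cannot stop there: it must treat the resulting inequality as a \emph{weak a-priori estimate} for the subharmonic function $\un{n_1}{B}$, and then boost it via the BMO estimate of Lemma~\ref{decay-BMO-1var} and John--Nirenberg. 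This two-step structure (coarse AP identity valid off a set not exponentially small in $n_1$, then BMO boost) is what produces the deviation measure $e^{-\ep n/(\log n)^p}$; your proposal instead claims a single-step estimate with measure $e^{-c_1\ep m+c_2(\log m)^4}\asymp e^{-c\ep n/N}$, exponentially small in $n$, which is nominally sharper than the theorem's conclusion --- another signal that the unjustified robust-AP step is proving too much.

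Two aspects of your proposal are genuinely nice and worth noting. First, translating by $N\om$ (rather than $\om$) so that the sharp Birkhoff estimate applies directly along the chain of $N$-blocks is a clean way to hit the relevant averages; the paper instead averages over $n_0$ different block-size configurations precisely to manufacture sums over \emph{all} translates of $\om$ before invoking Proposition~\ref{sharp-qBET} --- your route avoids that bookkeeping (at the cost of a $\T$-Diophantine constant degrading by a factor $N$, which is harmless since $N$ is fixed). Second, your use of compatibility/unique ergodicity to get a uniform-in-$x$ density bound on exceptional blocks is correct, and would be a useful ingredient in a scheme that does have a legitimate way to absorb isolated bad blocks. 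As it stands, though, the argument rests on an unproved structural claim about the Avalanche Principle, so it does not constitute a proof.
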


\smallskip

Before starting the proof of  this sharper fiber-LDT, we note that it cannot be obtained along the same lines as Theorem~\ref{LDTsing-thm}, by using the sharper estimate in Proposition~\ref{sharp-qBET}, precisely because the nearly almost invariance property (Proposition~\ref{unif-a-inv}) is too weak, 
as a consequence of the {\em singularities} of the cocycle.

\begin{proof}
We first explain the mechanics of the proof, then we detail the argument, which bears some similarities with an argument used in \cite{GS-fineIDS}.

We already have a version of the fiber-LDT estimate in Theorem~\ref{LDTsing-thm}, where the deviation functions $\devf (t) \equiv \ldtmeas \, t^{- a}$, $\mesf (t) \equiv e^{- t^b}$ are both relatively coarse. Using this LDT at a scale $n_0$ and the avalanche principle in Proposition~\ref{AP-general}, we derive an LDT estimate at a larger scale $n_1 \asymp e^{n_0^{b_1}}$ (where $0 < b_1 < b$), which has a much sharper deviation size function $\devf (t)$, but a very coarse deviation measure function $\mesf (t)$. This will lead, via Lemma~\ref{decay-BMO-1var}, to BMO estimates, which with the help of John-Nirenberg's inequality will prove the desired stronger LDT estimate. 

Let $\ga := L_1 (A) - L_2 (A) > 0$. Let $\bar{n}_0 \in \N$, $\delta > 0$ be such that the fiber-LDT estimate in Theorem~\ref{LDTsing-thm} holds for all cocycles $B \in \cocyclesone$ with $\norm{B-A}_r < \delta$ and for all $n \ge \bar{n}_0$.

Moreover, the Lyapunov exponents are already known to be continuous, and in fact, the more precise finite scale uniform estimates from Lemma 5.1 in~\cite{LEbook-chap3} hold as well. Therefore, we can choose $\bar{n}_0$ and  $\delta$ so that the following 
conditions hold for all  $B \in \cocyclesone$ with $\norm{B-A}_r < \delta$.
Firstly,
\begin{equation}\label{eq1-refine}
\frac{1}{m} \log \, \rgap (\Bn{m} (x) ) \ge \gamma/2
\end{equation}
holds for all $x$ outside a set of measure $ < e^{- m^b}$ and for all $m \ge \bar{n}_0$. 
Also
\begin{equation}\label{eq3-refine}
\abs{ \LE{m_1}_1 (B) - \LE{m_2}_1 (B) } < \gamma/30
\end{equation}
for all $m_1, m_2  \ge \bar{n}_0$.

Fix {\em any} integer $n_0 \ge \bar{n}_0$. We will use the notation $m \asymp n_0$ for scales $m$ such that $n_0 \le m \le 3 n_0$. 

When it comes to other quantities $a$ and $b$, $a \asymp b$ simply means $K^{-1} \, a \le b \le K \, a$, where the constant $K$ is context-universal.

Using \eqref{eq3-refine} and the LDT estimate~\eqref{LDTsing-star} applied at scales $m_1, m_2, m_1+m_2$, it follows from Lemma 4.2 in~\cite{LEbook-chap3} that if $m_1, m_2 \asymp n_0$, then
\begin{equation}\label{eq4-refine}
\frac{\norm{\Bn{m_2+m_1} (x)}}{ \norm{ \Bn{m_2} (T^{m_1} x)  } \cdot \norm{\Bn{m_1} (x) }   }  \ge e^{- n_0 \, \gamma / 5} =: \varepsilon
\end{equation}
holds for all $x$ outside a set of measure $ \less e^{- n_0^b}$.

Moreover, from \eqref{eq1-refine} we have, for $m \asymp n_0$
\begin{equation}\label{eq5-refine}
\rgap (\Bn{m} (x) ) \ge e^{n_0 \, \gamma / 2} =: \frac{1}{\varkappa},
\end{equation}
for all $x$ outside a set of measure $ < e^{- n_0^b}$.

Note also that 
\begin{equation}\label{eq6-refine}
\frac{\varkappa}{\varepsilon^2} = e^{- n_0 \, \gamma / 10} \ll 1\,.
\end{equation}

Let $0 < b_1 < b$ and let $n_1 \asymp e^{n_0^{b_1}}$, hence $n_0 \asymp (\log n_1)^{1/b_1}$. 

Consider the $n_1$-st iterate $\Bn{n_1} (x) = B (T^{n_1-1} \, ) \cdot \ldots \cdot B (T x) \cdot B (x)$.

We will break down this block (i.e. product of matrices) of length $n_1$ into (different configurations of) blocks of length $ \asymp n_0$, and apply the  avalanche principle in Proposition~\ref{AP-general} to the resulting configurations. The estimates \eqref{eq4-refine},  \eqref{eq5-refine}, \eqref{eq6-refine} will ensure that the hypotheses of the avalanche principle are satisfied outside a relatively small set of phases.

Pick any $n \asymp \frac{n_1}{n_0}$ many integers $m_0, m_1, \ldots, m_{n-1}$ such that 
$$m_i \asymp n_0 \ \text{ and } \ n_1 = \sum_{i=0}^{n-1} \, m_i\,.$$

Let $q_0 := 0$ and $q_i := m_{i-1} + \ldots + m_0 = m_{i-1} + q_{i-1}$ for $1 \le i \le n-1$.

\smallskip

Define $g_i = g_i (x) := \Bn{m_i} (T^{q_i} x )$ for $0 \le i \le n-1$.

Then $g^{(n)} = g_{n-1} \,   \ldots   \, g_0 = \Bn{n_1} (x)$ and $g_i   g_{i-1} = \Bn{m_i + m_{i-1}} (T^{q_{i-1}} x)$.

Using \eqref{eq4-refine}, if $x$ is outside a set of measure $ < e^{- n_0^b}$ we have
\begin{equation}\label{eq10-refine}
\frac{\norm{g_i   g_{i-1}}}{\norm{g_i} \,  \norm{g_{i-1}}} = 
\frac{\norm{\Bn{m_i+m_{i-1}} (T^{q_{i-1}} x)}}{ \norm{ \Bn{m_i} (T^{m_{i-1}} \, T^{q_{i-1}} x)  }  \, \norm{\Bn{m_{i-1}} ( T^{q_{i-1}} x) }   }  >  \varepsilon\,,
\end{equation}

while from  \eqref{eq5-refine}, for a similar set,
\begin{equation}\label{eq11-refine}
\rgap (g_i) = \rgap ( \Bn{m_i} (T^{q_i} x ) ) > \frac{1}{\varkappa}\,.
\end{equation}

By excluding a set of measure  $ < n \, e^{- n_0^{b}} < e^{- 1/2 \, n_0^{b}} $ the estimates \eqref{eq10-refine} and \eqref{eq11-refine} will hold for all indices $i$, so the avalanche principle of Proposition~\ref{AP-general} applies and we have:
\begin{align*} 
&  \sabs{ \log \norm{ g^{(n)} } + \sum_{i=1}^{n-2} \log \norm{g_i} -  \sum_{i=1}^{n-1} \log \norm{ g_i  \, g_{i-1}} } \less n \cdot \frac{\ka}{\ep^2} 
< n \, e^{- n_0 \, \gamma/10},
\end{align*}
which becomes
\begin{align}
\Bigl|  \, \log \norm{\Bn{n_1} (x) }    & +  \sum_{i=1}^{n-2} \log \norm{ \Bn{m_i} (T^{q_i} x )  } \label{eq12-refine}  \\ 
& -  \sum_{i=1}^{n-1} \log \norm{ \Bn{m_i + m_{i-1}} (T^{q_{i-1}} x) }   \, \Bigr|    
 < n \, e^{- n_0 \, \gamma/10} \notag\;. 
\end{align}

We will compute an average of~ \eqref{eq12-refine}  to establish a relation  
between the finite scale Lyapunov exponents
$\LE{n_1}_1 (B)$ , $\LE{n_0}_1 (B)$ and $\LE{2 n_0}_1 (B)$ (see formula~ \eqref{eq22-refine} below). In this average,
 the first and the last terms appearing in the second sum of~ \eqref{eq12-refine} will be discarded. Let us now explain why these two terms are  negligible. 

First note that if $m \asymp n_0$, then applying \eqref{LDTsing-star}, we have that for $x$ outside a set of measure $ < e^{- n_0^{b}}$,
$$\frac{1}{m} \, \log \, \norm{\Bn{m} (x) } > \LE{m}_1 (B) - m^{- a} > - C\,,$$
for some  $C = C(A) < \infty$, where the lower bound on $\LE{m}_1 (B)$ follows from \eqref{ulbounds-4}.

Moreover, since by \eqref{ulbounds-3} we have that for all $x \in \T$,
$$\frac{1}{m} \, \log \, \norm{\Bn{m} (x) }  \le C\,,$$
we conclude that for $x$ outside a set of measure $ < e^{- n_0^b}$, we have
$$ \abs{  \log \, \norm{\Bn{m} (x) }  } \le C \, m \less C \, n_0\,.$$

This estimate clearly applies to the functions $x \mapsto  \log \, \norm{\Bn{m_2+m_1} (x) }$ and  $x \mapsto  \log \, \norm{\Bn{m_{n-1} + m_{n-2}} (T^{q_{n-2}} x) } $ which represent the terms corresponding to $i=1$ and $i=n-1$ in the second sum in \eqref{eq12-refine}.

Let $v (x) :=  \log \, \norm{\Bn{m_2+m_1} (x) } +  \log \, \norm{\Bn{m_{n-1} + m_{n-2}} (T^{q_{n-2}} x) } $. Then for $x$ outside a set of measure $ \less e^{- n_0^{b}}$, the function $v (x)$ has the bound
$$\abs{ v (x) } \lesssim C n_0\;. $$

Since $n_1 \asymp e^{n_0^{b_1}}$, with $b_1 < 1$, 
estimate $\eqref{eq12-refine}$ then implies that for all $x$ outside a set of measure $ \less e^{- 1/2 \, n_0^{b}} $ we have:
\begin{align}
& \Bigl|  \, \frac{1}{n_1} \, \log \norm{\Bn{n_1} (x) }    +  \frac{1}{n_1} \, \sum_{i=1}^{n-2} \log \norm{ \Bn{m_i} (T^{q_i} x )  } \label{eq13-refine}  \\ 
& -  \frac{1}{n_1} \, \sum_{i=2}^{n-2} \log \norm{ \Bn{m_i + m_{i-1}} (T^{q_{i-1}} x) }   \, \Bigr|    
 < n \, e^{- n_0 \, \gamma/10} + C \frac{n_0}{n_1} \less \frac{n_0}{n_1} \notag\;.
\end{align}
We are now ready to average~\eqref{eq13-refine} over some specific choices of the integers $m_0, m_1, \ldots, m_{n-1}$. 

The goal is to apply Proposition~\ref{sharp-qBET} to $\un{m}{B} (z) = \frac{1}{m} \,  \log \, \norm{\Bn{m} (z) } $ for $m = n_0$ and $m = 2 n_0$, which will allow us to replace the two sums in \eqref{eq13-refine} by $\LE{n_0}_1 (B)$ and $\LE{2 n_0}_1 (B)$ respectively.

The reason we will not simply choose all integers $m_i$ to be $n_0$, but instead we will consider $n_0$-many configurations and then average, is that otherwise the translates $T^{q_i} \, x = x + q_i \, \om$ would only be by multiples of $n_0$, i.e. $q_i = i \, n_0$. However, Proposition~\ref{sharp-qBET} involves all translations $T^j x = x + j \om$, $ 0 \le j < n_1$ and not just the translations by  $i \, n_0 \, \om$, $0 \le i \le n-1$.


Firstly, pick all integers $m_i$ to be $n_0$ except for the last one, $m_{n-1}$, which is chosen such that $2 n_0 \le m_{n-1} \le 3 n_0$. Note that in this case $q_0 = 0$, $q_i = i \, n_0$ for $1 \le i \le n-1$, hence we are getting the translates by multiples of $n_0$.

Secondly, increase by $1$ the size of the first block, decrease by $1$ the size of the last block, and keep all the other blocks the same. In other words, let $m_0 = n_0 + 1$, $m_1 = m_2 = \ldots = m_{n-2} = n_0$ and $m_{n-1} \asymp n_0$ is chosen so that all integers $m_i$ add up to $n_1$. In this case $q_0 = 0$, $q_i = i n_0 + 1$ for $1 \le i \le n-1$, so we are getting the translates by multiples of $n_0$ plus $1$.

Continue to increase the first block by $1$, decrease the last by $1$ and keep the rest the same, for $n_0$ steps. 

In other words, for each $0 \le j \le n_0 - 1$, choose the following integers:
$$m_0 = n_0 + j, \ m_1 = m_2 = \ldots = m_{n-2} = n_0  \, \text{ and } m_{n-1} \asymp n_0\,,$$
so that they all add up to $n_1$.
Then $q_0 = 0$ and $q_i = i n_0 + j$.

Apply~\eqref{eq13-refine} for each of these $n_0$ configurations of integers $m_i$, $0\le i \le n-1$, add up all the estimates and divide by $n_0$ to get:
\begin{align}
& \Bigl|  \, \frac{1}{n_1} \, \log \norm{\Bn{n_1} (x) }    +  \frac{1}{n_1} \, \sum_{j=0}^{n_0-1} \, \sum_{i=1}^{n-2} \, \frac{1}{n_0} \, \log \norm{ \Bn{n_0} (T^{i \, n_0 + j} x )  } \label{eq14-refine}  \\ 
& -  \frac{2}{n_1} \, \sum_{j=0}^{n_0-1} \, \sum_{i=2}^{n-2} \, \frac{1}{2 n_0} \, \log \norm{ \Bn{2 n_0} (T^{(i-1) \, n_0 + j} x) }   \, \Bigr|    
 <  C \frac{n_0}{n_1}  \notag
\end{align}
for all $x$ outside a set of measure $ < n_0\, e^{- 1/2 \, n_0^{b}} $.

Estimate~\eqref{eq14-refine} can be written as:
\begin{align}
& \Bigl|  \, \frac{1}{n_1} \, \log \norm{\Bn{n_1} (x) }    +  \frac{1}{n_1}  \sum_{k=n_0}^{(n-1) \, n_0 - 1}  \frac{1}{n_0} \, \log \norm{ \Bn{n_0} (T^{k} x )  } \label{eq15-refine}  \\ 
& -  \frac{2}{n_1}   \sum_{k=n_0}^{(n-2) \, n_0 - 1}  \frac{1}{2 n_0} \, \log \norm{ \Bn{2 n_0} (T^k  x) }   \, \Bigr|    
 <  C \frac{n_0}{n_1}  \asymp \frac{(\log n_1)^{1/b_1}}{n_1}   \notag
\end{align}
for all $x$ outside a set of measure $ < e^{- 1/3 \, n_0^{b}} $.

Due to the uniform estimates \eqref{ulbounds-3}, \eqref{ulbounds-4} in Proposition~\ref{uplowbounds}, Proposition~\ref{sharp-qBET} can be applied to the functions
$$\un{n_0}{B} (z) = \frac{1}{n_0} \,  \log \, \norm{\Bn{n_0} (z) } \ \text{ and } \ \un{2 n_0}{B} (z) = \frac{1}{2 n_0} \,  \log \, \norm{\Bn{n_0} (z)}\,.$$

Let $\ep \asymp \frac{(\log n_1)^{1/b_1}}{n_1}$ and we may of course assume that  $1/b_1 > 1$.
Moreover, since the number $(n-2) \, n_0$ and respectively $(n-3) \, n_0$ of translates  in \eqref{eq15-refine} are $ \asymp n_1$, then up to an additional error of order $\frac{n_0}{n_1}$, using Proposition~\ref{sharp-qBET} we obtain:

\begin{equation}\label{eq20-refine}
\Bigl| \frac{1}{n_1}  \sum_{k=n_0}^{(n-1) \, n_0 - 1}  \frac{1}{n_0} \, \log \norm{ \Bn{n_0} (T^{k} x )  } - \LE{n_0}_1 (B) \Bigr| < \ep  \asymp \frac{(\log n_1)^{1/b_1}}{n_1}
\end{equation} 
for $x$ outside a set of measure 
$ < e^{- c_1 \ep n_1 + c_2 (\log n_1)^4} < e^{- c_3 \, (\log n_1)^{1/b_1}}$.

Similarly we have:
\begin{equation}\label{eq21-refine}
\Bigl| \frac{1}{n_1}  \sum_{k=n_0}^{(n-2) \, n_0 - 1}  \frac{1}{2 n_0} \, \log \norm{ \Bn{2 n_0} (T^{k} x )  } - \LE{2 n_0}_1 (B) \Bigr| <  \frac{(\log n_1)^{1/b_1}}{n_1}
\end{equation} 
for $x$ outside a set of measure 
$ < e^{- c_3 \, (\log n_1)^{1/b_1}}$.

Now integrate \eqref{eq15-refine} to get:
\begin{align}
\abs{  \LE{n_1}_1 (B)  &+  \LE{n_0}_1 (B) - 2 \LE{2 n_0}_1 (B)  }  \notag \\
  & < C \frac{n_0}{n_1}  + C \, e^{- 1/3 \, n_0^b} \less  \frac{(\log n_1)^{1/b_1}}{n_1} \;. \label{eq22-refine}
\end{align}

Combine \eqref{eq15-refine}, \eqref{eq20-refine}, \eqref{eq21-refine}, \eqref{eq22-refine} to conclude that for $x$ outside a set of measure $ \less e^{- c_3 \, (\log n_1)^{1/b_1}}$, we have
\begin{equation}\label{eq23-refine}
\sabs{   \frac{1}{n_1} \, \log \norm{\Bn{n_1} (x) }   - \LE{n_1}_1 (B)  } \less \frac{(\log n_1)^{1/b_1}}{n_1}\;.
\end{equation}

Let $$\ep_0 \asymp \frac{(\log n_1)^{1/b_1}}{n_1}, \quad \ep_1 \asymp e^{- c_3 \, (\log n_1)^{1/b_1}} \ll  \ep_0^8  $$ 
and let $$ u (z) = \un{n_1}{B} (z) = \frac{1}{n_1} \,  \log \, \norm{\Bn{n_1} (z) }\;. $$

Then $u (z)$ is subharmonic on $\strip_r$, and by Proposition~\ref{uplowbounds}, for some $ C = C(A) < \infty$  we have the bound
$$\sup_{z \in \strip_r} u (z) + \norml{u}{\Lp{2}} \le C\,,$$
which via Remark~\ref{decays-sh:remark} implies \eqref{measurement-sh} with $\umeas = \mathscr{O} \bigl( \frac{C_r }{r} \, C \bigr)$.

Moreover, from \eqref{eq23-refine}
$$\abs{ \{ x \in \T \colon \abs{ u (x) - \avg{u} } > \ep_0 \} } < \ep_1\,.$$

All the assumptions of Lemma~\ref{decay-BMO-1var} are then satisfied and we conclude that
$$\norml{u}{BMO (\T)} \less \ep_0 + ( \umeas \, \ep_1 )^{1/2} \less \ep_0\,,$$
provided $n_1$ is large enough, depending on $A$, so that $\ep_1$ absorbs the constant $\umeas$.

Therefore, 
$$\norml{u}{BMO (\T)} \less \ep_0 \asymp \frac{(\log n_1)^{1/b_1}}{n_1}\,,$$
and so John-Nirenberg's inequality implies that for all $ \ep > 0$
$$\abs{ \{ x \in \T \colon \abs{ u (x) - \avg{u} } > \ep \} } < e^{- c_0 \, \ep / \norml{u}{BMO (\T)}}.$$

Writing the last estimate in terms of iterates of our cocycle we have:
\begin{equation}\label{eq30-refine}
\sabs{  \{ x \in \T \colon  \abs{ \frac{1}{n_1} \, \log \norm{\Bn{n_1} (x) }   - \LE{n_1}_1 (B) } > \ep \} } < e^{- c_0 \, \ep \, n_1 / (\log n_1)^{1/b_1} }.
\end{equation}

We ran this argument starting at {\em any} scale $n_0 \ge \bar{n}_0$ and we obtained \eqref{eq30-refine} for $n_1 \asymp e^{n_0^{b_1}}$. Therefore, if 
$\bar{n}_1 \asymp e^{\bar{n}_0^{b_1}}$, then the estimate \eqref{eq30-refine} holds for all $n_1 \ge \bar{n}_1$, which proves our theorem.
\end{proof}

\begin{remark}\label{mod-cont-strong-one-var} \normalfont
Using the terminology in Section~\ref{cont_le_qp}, we have shown that for a one-variable torus translation by a (strongly) Diophantine frequency, a uniform fiber-LDT holds with deviation measure function
$$\mesf (t) \equiv e^{- c \, t / [ \log t ]^b}, \quad \text{ for some } c, b > 0\,.$$

We may then conclude, as in the proof of Theorem~\ref{qp_ldt_thm:cont}, that if $A\in \cocycles_{m}$ has a $\tau$-gap pattern, then in a neighborhood of $A$ the functions
$\Lambda^\tau$, $\filt^\tau$ and $\dec^\tau$ have the modulus of continuity 
$$\omega (h) :=  e^{- c \, [ \log 1/h ] / [ \log  \log 1/h ]^b}.$$

That is, in the presence of gaps in the Lyapunov spectrum, the LE, the Oseledets filtration and the Oseledets decomposition are locally nearly-H\"older continuous.

 \end{remark}

\bigskip

\subsection*{Acknowledgments}

The first author was supported by 
Funda\c{c}\~{a}o  para a  Ci\^{e}ncia e a Tecnologia, 
UID/MAT/04561/2013.

The second author was supported by the Norwegian Research Council project no. 213638, "Discrete Models in Mathematical Analysis".  

The second author would like to thank C. Marx for suggesting the problem and M. Vod\u{a} for a useful conversation on this topic.

\bigskip

\bibliographystyle{amsplain} 

\providecommand{\bysame}{\leavevmode\hbox to3em{\hrulefill}\thinspace}
\providecommand{\MR}{\relax\ifhmode\unskip\space\fi MR }
\providecommand{\MRhref}[2]{%
  \href{http://www.ams.org/mathscinet-getitem?mr=#1}{#2}
}
\providecommand{\href}[2]{#2}

\end{document}